\documentclass[11pt]{amsart}
\usepackage{pb-diagram,amssymb,epic,eepic,verbatim,transparent}
\usepackage{tikz-cd,braket}
\usepackage{xr-hyper}
\usepackage{hyperref,graphics,epsfig,psfrag}
\usepackage[inline]{enumitem}
\hypersetup{colorlinks}
\usepackage{color}
\definecolor{darkred}{rgb}{0.5,0,0}
\definecolor{darkgreen}{rgb}{0,0.5,0}
\definecolor{darkblue}{rgb}{0,0,0.5}
\hypersetup{ colorlinks,
linkcolor=darkblue,
filecolor=darkgreen,
urlcolor=darkred,
citecolor=darkblue }
\dgARROWLENGTH=1em
\parskip 0in
\textwidth14 cm
\oddsidemargin1cm  
\evensidemargin1cm

\newtheorem{theorem}{Theorem}[section]

\newtheorem{corollary}[theorem]{Corollary}

\newtheorem{proposition}[theorem]{Proposition}
\newtheorem{lemma}[theorem]{Lemma}
\newtheorem{lem}[theorem]{}
\theoremstyle{definition}
\newtheorem{definition}[theorem]{Definition}
\theoremstyle{remark}
\newtheorem{remark}[theorem]{Remark}
\newtheorem{example}[theorem]{Example}
\newcommand{\blem}{\begin{lem} \rm}
\newcommand{\elem}{\end{lem}}

\setcounter{tocdepth}{2}

%

\newcounter{mparcnt}

%
%

\newcommand\M{\mathcal{M}}
\newcommand\D{\mathbb{D}}

\newcommand\mC{\mathcal{C}}

\renewcommand\S{\mathcal{S}}

\newcommand{\W}{\mathcal{W}}

\newcommand{\T}{\mathcal{T}}
\newcommand{\J}{\mathcal{J}}

\newcommand{\U}{\mathcal{U}}
\newcommand{\F}{\mathcal{F}}

\newcommand{\R}{\mathbb{R}}

\newcommand{\C}{\mathbb{C}}
\newcommand{\CC}{C\kern-1.3ex|}

\newcommand{\Z}{\mathbb{Z}}
\newcommand{\Q}{\mathbb{Q}}

\renewcommand{\P}{\mathbb{P}}

\newcommand{\PP}{\mathcal{P}}

\newcommand{\Pe}{\mathfrak{p}}
\newcommand{\PPe}{\mathcal{P}}

\newcommand\lie[1]{\mathfrak{#1}}

\renewcommand{\t}{\lie{t}}

\newcommand{\on}{\operatorname}

\newcommand{\Diff}{\on{Diff}}
\newcommand{\ann}{\on{ann}}

\newcommand{\ainfty}{{$A_\infty$\ }}

\newcommand{\pre}{{\on{pre}}}


\newcommand{\fr}{{\on{fr}}}

\newcommand{\red}{{\on{red}}}

\newcommand{\dual}{\vee}

\newcommand\abs[1]{\lvert #1 \rvert}

\newcommand{\Edge}{\on{Edge}}

\newcommand{\Ver}{\on{Vert}}

\newcommand{\Hol}{ \on{Hol} }

\renewcommand{\ker}{ \on{ker}}

\newcommand{\im}{ \on{im}}

\newcommand{\mult}{  \on{mult}}


\newcommand{\Disc}{{\on{Disc}}}

\newcommand{\codim}{\on{codim}}

\newcommand\dirac{/\kern-1.2ex\partial} 
\newcommand\qu{/\kern-.7ex/} 
\newcommand\lqu{\backslash \kern-.7ex \backslash} 
\newcommand\bs{\backslash}

\newcommand\etasp{\eta_{\on{gen}}}



\newcommand{\labell}\label




\renewcommand{\d}{{\on{d}}}

\newcommand{\ol}{\overline}

\newcommand\Phinv{\Phi^{-1}}

\newcommand\lam{\lambda}
\newcommand\Lam{\Lambda}

\newcommand\sig{\sigma}
\newcommand\eps{\epsilon}

\newcommand\om{\omega}

\newcommand{\lan}{\langle}
\newcommand{\ran}{\rangle}
\newcommand{\hh}{{\tfrac{1}{2}}}

\newcommand{\ti}{\tilde}
\newcommand{\tM}{\M}
\newcommand{\tP}{{\tilde P}}

\newcommand\cP{\mathcal{P}}
\newcommand\cT{\mathcal{T}}

\newcommand\cI{\mathcal{I}}

\newcommand\Map{\on{Map}}

\newcommand\ev{\on{ev}}

\newcommand\ul{\underline}

\newcommand\reg{{\on{reg}}}
\newcommand\bran[1]{ \lan {#1} \ran}

\newcommand\cwo[1]{}
\newcommand\cwl[1]{{}}
\newcommand\wwo[1]{ { \color{darkpurple} } }

\newcommand\pwo[1]{ { \color{darkpink} } }

\newcommand\Cone{\on{Cone}}

\newcommand\Id{\on{Id}}

\newcommand\XX{\mathfrak{X}} 
\newcommand\XC{\XX} 
\newcommand\XB{X} 


\newcommand\JJ{\mathfrak{J}}

\newcommand\crit{{\on{crit}}}

\newcommand\sx{*\kern-.5ex_X}

\newcommand\white{{\includegraphics[width=.05in]{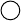}}}

\newcommand\black{{\includegraphics[width=.05in]{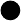}}}
\newcommand\whitet{{\includegraphics[width=.07in]{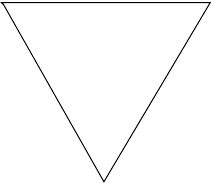}}}
\newcommand\greyt{\includegraphics[width=.07in]{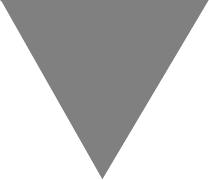}}
\newcommand\blackt{{\includegraphics[width=.07in]{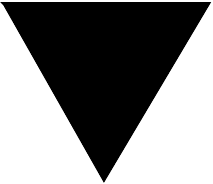}}}


\newcommand{\tX}{\tilde X}

\newcommand{\tC}{\tilde C}

\newcommand{\tensor}{\otimes}
%

\renewcommand{\rm}{\rmfamily}

\renewcommand{\em}{\textit}

\newcommand{\cyl}{{\on{cyl}}}

\newcommand\br{{\on{brok}}}

\newcommand{\trop}{{\on{trop}}}
\renewcommand{\split}{{\on{split}}}

\newcommand\tGam{{\tilde \Gamma}}

\newcommand\bGam{\Gamma}
\newcommand\Gam{\Gamma}

\newcommand{\deform}{{\on{def}}}
\newcommand{\spl}{{\on{split}}}

\newcommand\fw{w}

\renewcommand\em{\textit}

\def\mathunderaccent#1{\let\theaccent#1\mathpalette\putaccentunder}
\def\putaccentunder#1#2{\oalign{$#1#2$\crcr\hidewidth \vbox
to.2ex{\hbox{$#1\theaccent{}$}\vss}\hidewidth}}

\usepackage{xr}
\externaldocument[T-]{trop}


\begin{document}


\title{Splitting the diagonal for broken maps}

\begin{abstract} In previous work \cite{vw:trop} we introduced a
  version of the Fukaya algebra associated to a degeneration of a
  symplectic manifold, whose structure maps count collections of maps
  in the components of the degeneration satisfying matching
  conditions.  In this paper, we introduce a further degeneration of the 
  matching conditions (similar in spirit to Bourgeois' version of
  symplectic field theory \cite{bo:com}) which results in a \em{ split
    Fukaya algebra} whose structure maps are, in good cases, sums of
  products over vertices of tropical graphs.  In the case of toric
  Lagrangians contained in a toric component of the degeneration, an
  invariance argument implies the existence of projective
  Maurer-Cartan solutions,
  which gives an alternate proof of the unobstructedness result of
  Fukaya-Oh-Ohta-Ono
  \cite{fooo:toric1} for toric manifolds. Our result also proves unobstructedness in more general cases, such as
   for toric Lagrangians in almost toric four-manifolds.
\end{abstract}

\author{Sushmita Venugopalan and Chris T. Woodward}

\thanks{C.W.  was partially supported by NSF grant DMS 2105417.  Any
  opinions, findings, and conclusions or recommendations expressed in
  this material are those of the author(s) and do not necessarily
  reflect the views of the National Science Foundation.}

\maketitle

\tableofcontents

\section{Introduction}

In our previous work \cite{vw:trop} we studied the limit of the Fukaya
algebra under multi-directional neck stretching.  The main result was
a homotopy equivalence between the Fukaya algebra
of a Lagrangian
in a smooth manifold, and a broken analog,
called the \em{broken Fukaya algebra}, 
defined by counting broken maps
associated to the neck-stretching limit.
Broken maps
have to satisfy matching conditions at
the nodes, which means that the moduli space of broken maps is not an
honest product of the moduli spaces of the components of the broken
map, but rather a fiber product over a diagonal.

In this paper, we introduce a further degeneration of broken maps into
split maps.  In split maps, there is no edge matching condition on a
subset of edges, called \em{split edges}. A homotopy equivalence
between the Fukaya algebras defined using broken maps and split maps
is constructed by deforming the edge matching condition at the split
edges. The maps with a deformed edge matching condition on split edges
are called \em{deformed maps}. Assuming that the matching conditions
are deformed in a generic direction,
the limit of
deformed maps then produces \em{split maps}. The tropical graph of a split map, called the \em{split tropical graph}
does not have a matching condition at split edges, however the set of discrepancies from the matching condition is a cone containing the deformation direction.
%
%
The moduli spaces of split maps are honest products with respect to the subgraphs obtained by cutting
edges, rather than fiber products cut out by matching conditions. Our
approach of splitting the diagonal resembles Bourgeois' approach
\cite{bo:com} of degenerating matching conditions in symplectic field
theory, and the Fulton-Sturmfels formula \cite{fulton} for the
splitting of a diagonal in a toric variety; the relations to these
results are explored in Section \ref{subsec:fs}.

We introduce some notation. Let $\XX$ be the broken manifold
that is the neck-stretching limit of a symplectic manifold $X$. Let
$L \subset X$ be a Lagrangian submanifold that is disjoint from the
necks, and let $CF_\br(\XX,L)$ be the broken Fukaya algebra.

Any map in a broken manifold, whether broken, deformed, or split, has
a \em{tropical symmetry group}, obtained by applying one of the
translational symmetries on each component in a way that preserves the
matching conditions. 
The tropical symmetry group of a rigid
broken/deformed map (that is, maps of index zero) is finite.
For split maps, the tropical symmetry group is larger, because there is no matching condition on the split edges.
In particular, 
for a
converging sequence of rigid deformed maps, the dimension of the
tropical symmetry group of the limit split map is equal to the
codimension of the matching condition at split nodes.  We define the
composition maps of the \em{split Fukaya algebra}
$CF_{\split}(\XX,L,{\etasp})$ 
of a broken manifold $\XX$ 
via counts of tropical symmetry orbits
of split maps,
for which the discrepancy cone of the split tropical graph
contains the deformation direction 
%
as in Definition \ref{def:splitgr}.
Split maps are informally described in Section \ref{subsec:idea}.

\begin{theorem} \label{thm:tfuk} Let ${\etasp} \in \t$ be a generic
  direction of deformation.  The broken Fukaya algebra
  $CF_{\br}(\XX,L)$ is $A_\infty$-homotopy equivalent to the split
  Fukaya algebra $CF_{\split}(\XX, L, {\etasp})$.
\end{theorem}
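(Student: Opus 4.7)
The plan is to construct the homotopy equivalence through an intermediate object, the \emph{deformed Fukaya algebra} $CF_{\deform}(\XX, L, \eta)$, defined by counting deformed maps with matching condition displaced by $\eta \in \t$ at the split edges. This gives a chain
\[
CF_{\br}(\XX,L) \;\simeq\; CF_{\deform}(\XX, L, \eta) \;\simeq\; CF_{\split}(\XX, L, \etasp),
\]
and I would establish each equivalence by a separate argument.

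For the first equivalence, one regards $\eta \mapsto CF_{\deform}(\XX,L,\eta)$ as a smooth family of $A_\infty$-algebras with $\eta=0$ giving the broken algebra. A standard $A_\infty$-continuation construction, built from counts of deformed maps with the deformation parameter varying along a strip/cylindrical parameter on the domain, produces $A_\infty$-morphisms both ways; these are shown to be mutual homotopy inverses by a parametrized moduli argument with boundary at $\eta=0$ and $\eta = \eta_0$. The deformation is small enough that no new bubbling appears, so the usual compactness and transversality package inherited from \cite{vw:trop} applies. This step is essentially formal given the framework of broken Fukaya algebras.

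The second equivalence is the heart of the proof. Fix $\etasp \in \t$ generic and consider the rescaling $\eta = s\etasp$ as $s \to \infty$. The key technical claim, proved by a Gromov-style compactness argument adapted to the broken setting, is that a sequence of rigid $\eta$-deformed maps converges after passing to a subsequence, and after dividing by the relative tropical translations, to a split map whose discrepancy cone contains $\etasp$; conversely every such split map is realized as such a limit. The codimension statement in the introduction identifies the dimension of the tropical symmetry of the limiting split map with the codimension of the matching condition at split edges, so a rigid deformed map corresponds exactly to a tropical-symmetry orbit of split maps whose discrepancy cone contains $\etasp$. This yields an equality of structure coefficients
\[
\mu^n_{\deform,s\etasp} \;=\; \mu^n_{\split,\etasp}
\]
for all sufficiently large $s$, which combined with the first step gives the required $A_\infty$-homotopy equivalence.

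The main obstacle is the compactness and gluing package in the second step: one must show that the only way a family of rigid deformed maps can degenerate as $s\to\infty$ is by developing tropical symmetry along split edges (and not by bubbling off disks or escaping into the necks in an uncontrolled way), and that every split tropical graph whose discrepancy cone contains $\etasp$ is attained transversally. Handling this requires a careful analysis of the neck parameters and of the polyhedral geometry of discrepancy cones, together with a genericity argument on $\etasp$ ensuring that its ray meets each cone in at most a top-dimensional piece, so that contributions are counted without multiplicity. Orientations and signs then propagate through both deformations by the usual arguments, completing the proof.
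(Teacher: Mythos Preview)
Your overall architecture---factor through a deformed Fukaya algebra and run a two-step homotopy equivalence---is exactly the paper's strategy, and your identification of the second step as the substantive one is correct. However, several of your simplifications hide genuine difficulties that the paper has to work around.

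First, the deformation parameter cannot be a constant $\eta \in \t$. For treed disks (as opposed to spheres) the deformation datum must be \emph{domain-dependent}, varying with the lengths of boundary treed segments, in order to be coherent under the (Cutting an edge) morphism that underlies the $A_\infty$ relations. The paper builds this carefully (Definition~\ref{def:coh-deform}, Remark~\ref{rem:gwspl}); without it the deformed composition maps do not satisfy the $A_\infty$ axioms.

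Second, and more seriously, the single-parameter rescaling $\eta = s\,\etasp$ is inadequate when there is more than one split edge. The paper sends the deformation parameters on different split edges to infinity at \emph{different rates}, via an ``increasing sequence of tuples'' tied to an ordering on the split edges (Definitions~\ref{def:inc-tup-e} and~\ref{def:compdef}). This is not a technicality: it is precisely what forces the discrepancy cone of the limit split tropical graph to be top-dimensional (Proposition~\ref{prop:discrdim}), which in turn gives the tropical symmetry group the dimension needed to match the codimension of the dropped matching conditions. Example~\ref{ex:4free} exhibits a quasi-split graph in which every individual edge's discrepancy cone contains $\etasp$, yet the joint cone is not top-dimensional; such a graph would appear as a limit under uniform rescaling but must be excluded.

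Third, your claim that contributions are counted ``without multiplicity'' is wrong: the split composition maps carry a weight $\mult(\tGam) = |T_{\trop,\fr}(\tGam)|$, the order of the finite \emph{framed} tropical symmetry group (Definition~\ref{def:fr-split}). The bijection produced by compactness and gluing is between rigid deformed maps and rigid \emph{framed} split maps, not split maps themselves.

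Finally, the equality $\mu^n_{\deform,s\etasp} = \mu^n_{\split,\etasp}$ holds only modulo $q^{E_0}$ for $s$ large depending on $E_0$; the actual homotopy equivalence is assembled as a convergent limit of compositions of $A_\infty$ morphisms between deformed algebras at successive scales (Propositions~\ref{prop:defmorph} and~\ref{prop:hequiv}), not by a single large-$s$ identification.
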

\noindent After the first version of this preprint appeared, similar
results appeared Wu \cite{wu} in the algebro-geometric context.  The
proof of the Theorem for split maps is completed in Proposition
\ref{prop:hequiv} below.

Using the split Fukaya algebra, we prove unobstructedness of certain
Lagrangian tori, generalizing a result of Fukaya-Oh-Ohta-Ono result
\cite{fooo:toric1} on Lagrangian torus orbits in toric varieties.
By our assumption that the Lagrangian $L$ is away from the necks, $L$ descends to a Lagrangian
submanifold in a component $X_P$ of the broken manifold $\XX$ that is not a neck piece.
Furthermore, such a Lagrangian $L \subset X$ is a \em{tropical torus} if
$\ol X_P$ is a toric
manifold (that is, is a compact symplectic manifold with a completely
integrable Hamiltonian torus action), whose torus-invariant divisors
are relative divisors, and $L \subset X_P$ is a Lagrangian torus
orbit.  
A tropical torus
appears to be a somewhat stronger requirement than the notion of
moment fiber in a toric degeneration used in mirror symmetry, where
``discriminant'' singularities are allowed as in Gross
\cite{grosstoric}.

Homotopy equivalence with the split Fukaya algebra is used to prove
that tropical tori are weakly unobstructed. We describe the term `weak
unobstructedness': The geometric Fukaya algebra generated by the
critical points of a Morse function on the Lagrangian $L$ can be
enlarged to a Fukaya algebra $CF(X,L)$ in order to include a strict unit
$1_L \in CF(X,L)$ by the \em{homotopy unit construction} as in
Fukaya-Oh-Ohta-Ono \cite[(3.3.5.2)]{fooo}.
The cohomology $H(A)$ of a (possibly curved) \ainfty-algebra $A$ is
well-defined if the first order composition map $m_1$ satisfies
$m_1^2=0$. The condition $m_1^2=0$ may fail to hold if the
\em{curvature} $m_0(1) \in A$ is not a $\Lam$-multiple of the unit
$1_L$, which is an `obstruction' to the definition of Floer
cohomology.  \em{Weak unobstructedness} is a more general condition
under which the Floer cohomology of a Lagrangian brane can be
defined. A Lagrangian brane $L$ is \em{weakly unobstructed} if the
projective Maurer-Cartan equation
\begin{equation} \label{eq:mc-intro} m_0(1) + m_1(b) + m_2(b,b) +
  \ldots = W(b) 1_{L} \quad \text{for some } W(b) \in \Lambda
  .\end{equation}
has an odd solution $b \in CF(X,L)$. Any odd solution to the
Maurer-Cartan equation is called a \em{weakly bounding cochain} and
the set of all the odd solutions is denoted $MC(L)$.  Given a weakly
bounding cochain, the Fukaya algebra $CF(X,L)$ may be `deformed' by
$b$ (see \eqref{T-eq:mnb} of \cite{vw:trop}) to yield an
\ainfty-algebra $CF(X,L,b)$ with composition maps $(m_n^b)_{n \geq 0}$
satisfying
\[m_0^b=W(b) 1_L, \quad \text{for some} \quad W(b) \in \Lam.\]
Consequently, $(m_1^b)^2=0$, and the Floer cohomology
\[HF(L,b):=\ker(m_1^b)/\im(m_1^b)\]
is well-defined.  The function
\[ W: MC(L) \to \Lambda , \quad b \mapsto W(b) \]
is called the \em{potential} 
of the curved \ainfty algebra $CF(X,L,b)$.
The same discussion applies to $CF_\br(\XX,L)$ and $CF_\spl(\XX,L,\etasp)$ also.

The following result shows that a tropical Lagrangian is unobstructed,
and it has a distinguished solution of the Maurer-Cartan equation.

\begin{corollary} \label{cor:mc} {\rm(Unobstructedness of a tropical
    torus)} Suppose that $L \subset X_{P_0}$ is a tropical torus
  equipped with a brane structure in a broken manifold $\XX$.  For
  any generic cone direction ${\etasp} \in \t$, the split Fukaya
  algebra $CF_{\split}(\XX,L,{\etasp})$ is weakly unobstructed with a
  distinguished solution $b \in MC(L)$ of the Maurer-Cartan equation.
  Consequently, $CF(X,L)$ is also weakly unobstructed.
\end{corollary}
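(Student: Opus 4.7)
The plan is to transport the question to the split Fukaya algebra via Theorem \ref{thm:tfuk}, and then to construct the distinguished weakly bounding cochain on the split side using the torus action on the toric component $\ol X_{P_0}$ together with the product-over-vertices structure of the split composition maps.

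\smallskip

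Weak unobstructedness of a strictly unital curved $A_\infty$-algebra and the existence of a solution to \eqref{eq:mc-intro} are $A_\infty$-homotopy invariant, since an equivalence of such algebras transports Maurer--Cartan solutions while preserving the potential $W$. Combining Theorem \ref{thm:tfuk} with the homotopy equivalence $CF(X,L) \simeq CF_\br(\XX,L)$ of \cite{vw:trop} reduces both conclusions of the corollary to producing a distinguished $b \in MC(L)$ for $CF_\spl(\XX,L,\etasp)$.

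\smallskip

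Since $\ol X_{P_0}$ is toric and $L \cong T^n$ is a torus orbit, the split Fukaya algebra can be set up from $T$-equivariant Morse and perturbation data, so that the torus $T$ acts on every structure map $m_n$ of $CF_\spl(\XX,L,\etasp)$. Let $e_1,\dots,e_n$ be a basis of degree-one $T$-invariant cochains on $L$, and take the ansatz
\[
b \;=\; \sum_{i=1}^n x_i e_i, \qquad x_i \in \Lam_0,
\]
in direct analogy with the bounding cochain of Fukaya--Oh--Ohta--Ono \cite{fooo:toric1}. By Definition \ref{def:splitgr} and the defining property of $m_n$, the sum $\sum_{n\geq 0} m_n(b,\dots,b)$ is a sum over split tropical graphs $\Gamma$ whose discrepancy cone contains $\etasp$, weighted by a product over vertices $v \in \Ver(\Gamma)$ of relative holomorphic map counts in the component indexed by $v$, modulo the tropical symmetry group. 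Equivariance forces the output to lie in the $T$-invariant subalgebra, whose top-degree piece is spanned by $1_L$.

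\smallskip

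At leading order the only contributing graphs have a single disk vertex $v_0$ in $\ol X_{P_0}$ and no neck-side closed vertices, and the disk count at $v_0$ is computed by the Cho--Oh classification of Maslov-index-two holomorphic disks through a toric Lagrangian orbit: one monomial in the $x_i$ per torus-invariant facet of the moment polytope meeting $\ol X_{P_0}$, each times $1_L$. This reproduces the classical Hori--Vafa toric superpotential $W_0(b)$ and is manifestly a multiple of $1_L$. The main obstacle is dealing with higher-order split tropical graphs that do carry neck-side closed vertices: one must show that for generic $\etasp$ every such graph either has a positive-dimensional tropical symmetry group --- and hence vanishes in the tropical-orbit count built into the definition of $m_n$ --- or contributes a $T$-invariant class proportional to $1_L$. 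This rests on the index computation, alluded to in the introduction, which matches the dimension of the tropical symmetry group of a limit split map to the codimension of the split matching condition, together with an invariance-in-$\etasp$ argument ensuring that the resulting $W(b) \in \Lam$ is well-defined. Once this vanishing is in place, the projective Maurer--Cartan equation reduces order-by-order in the Novikov filtration to an algebraic recursion in the $x_i$, which is solved by formal iteration to produce the distinguished $b$ and complete the proof.
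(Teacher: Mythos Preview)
Your reduction to the split Fukaya algebra via the chain of homotopy equivalences is correct and matches the paper. From that point on, however, your argument diverges from the paper's and carries a genuine gap.

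The paper's distinguished solution is not of the form $b=\sum_i x_i e_i$ with degree-one cochains $e_i$; it is $b = w\,x^{\greyt}$, the homotopy-unit generator scaled by the constant $w\in\Lambda_{>0}$ determined by $m_0(1)=w\,x^{\blackt}$. Once one knows $m_0(1)$ is a multiple of the geometric unit $x^{\blackt}$, the homotopy-unit relations $m_1(x^{\greyt})=x^{\whitet}-x^{\blackt}$ and $m_d(x^{\greyt},\dots,x^{\greyt})=0$ for $d\ge 2$ (both established by the same dimension argument) give the Maurer--Cartan equation for $b=w\,x^{\greyt}$ immediately, with no recursion to solve.

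The key step is therefore to prove $m_0(1)\in\Lambda\cdot x^{\blackt}$, and here your $T$-equivariance argument does not go through as stated. In the Morse model used throughout, the Morse function on $L$ cannot be $T$-invariant (the action on $L$ is transitive), so there is no ``$T$-equivariant Morse and perturbation data'' in the sense you invoke, and there is no direct way to say the output is a $T$-invariant cochain. Moreover, even granting invariance at the level of cohomology, every class in $H^*(L)\cong H^*(T^n)$ is $T$-invariant, so invariance alone would not single out $1_L$. The paper instead argues on the moduli space: take a rigid split disk $u$ contributing to $m_0(1)$ with output $x_0$, forget the output leaf, and translate the disk-side components by $t\in T$ (using only that $J$ is standard on $\ol X_{P_0}$). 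Since the tropical symmetry group acts trivially on $\ol X_{P_0}$, these translates are pairwise distinct in the reduced moduli space, producing a family of dimension at least $\dim T=\dim L$. Comparing with the expected dimension after forgetting the output forces $x_0=x^{\blackt}$. This replaces your appeal to equivariance and your unproved claim about higher-order split graphs; no order-by-order solution or vanishing for ``neck-side closed vertices'' is needed.
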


Corollary \ref{cor:mc} is re-stated and proved as Proposition
\ref{prop:mc-restate} in Section \ref{sec:unobst}. The proof relies on
the \ainfty homotopy equivalence from Theorem \ref{thm:tfuk}, and the
fact that weak unobstructedness is preserved under homotopy
equivalence. In fact, a solution $w x^{\greyt} \in  CF_{\split}(\XX,L,{\etasp})$ of the Maurer-Cartan equation for the split Fukaya algebra yields a solution
$\F(w x^{\greyt}) \in CF(X,L)$ for the Maurer-Cartan equation on the unbroken
Fukaya algebra, where $\F : CF_{\split}(\XX,L,{\etasp}) \to CF(X,L)$
is a $A_\infty$-homotopy equivalence.

Corollary \ref{cor:mc}, when specialized to the case of toric
manifolds, gives an alternate proof of the result of
Fukaya-Oh-Ohta-Ono \cite{fooo:toric1} on the unobstructedness of toric
Lagrangians in a toric manifold; we state this result as Corollary
\ref{cor:fooo} and prove it as part of Proposition \ref{prop:blas}.
Additionally, in the toric case, the solution $b \in MC(L)$ of the Maurer-Cartan
solution given by Corollary \ref{cor:mc} is such that the leading
order terms in the potential $CF_{\split}(\XX,L,b)$ coincide with the
leading order terms in the Batyrev-Givental potential
\eqref{eq:bgpot}.

\begin{corollary}
  \label{cor:fooo}
  {\rm(Disk potentials in a toric manifold)} Let $X$ be a symplectic
  toric manifold, and let $L \subset X$ be a toric Lagrangian brane.
  Then, $CF(X,L)$ is weakly unobstructed. Furthermore, there is a
  solution $b \in MC(L)$ of the Maurer-Cartan equation for which the
  leading order terms in the potential $W(b)$ coincide with the
  leading order terms of the Batyrev-Givental potential
  \eqref{eq:bgpot}.
\end{corollary}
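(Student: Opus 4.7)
The plan is to realize Corollary \ref{cor:fooo} as a direct specialization of Corollary \ref{cor:mc}, and then to identify the leading order terms of the resulting disk potential with those of the Batyrev--Givental potential by an explicit enumeration of the lowest-area split maps.

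First I would observe that a symplectic toric manifold $X$ with toric Lagrangian orbit $L$ fits the framework of a tropical torus in a trivial way: take the broken manifold $\XX$ to consist of the single toric component $X_{P_0} = X$, with the torus-invariant divisors as the relative divisors and $L \subset X$ as the Lagrangian torus orbit. The hypotheses of Corollary \ref{cor:mc} are satisfied, so for any generic $\etasp \in \t$ the split Fukaya algebra $CF_{\split}(\XX,L,\etasp)$ carries a distinguished weakly bounding cochain $b \in MC(L)$. Pushing $b$ forward along the $A_\infty$-homotopy equivalence $\F$ of Theorem \ref{thm:tfuk} (and then along the homotopy equivalence between the broken and unbroken Fukaya algebras from \cite{vw:trop}) yields a weakly bounding cochain for $CF(X,L)$; this proves the weak unobstructedness assertion.

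Next, to match the leading order terms of $W(b)$ with the Batyrev--Givental potential \eqref{eq:bgpot}, I would analyze which split maps contribute at lowest $T$-valuation. By the Cho--Oh classification, the Maslov index two holomorphic disks bounded by a toric Lagrangian orbit $L$ are indexed by the torus-invariant prime divisors $D_i$: each $D_i$ contributes a one-parameter family of disks of symplectic area $\ell_i$ (the affine distance from the moment image of $L$ to the facet $F_i$) whose boundary wraps in the direction of the primitive inward normal $v_i$. In the split framework these correspond to split graphs with a single non-trivial vertex meeting the relative divisor $D_i$ once, and each such configuration forms a single tropical-symmetry orbit. Counting these orbits yields a curvature contribution of $T^{\ell_i} z^{v_i}$ for every torus-invariant divisor, which is precisely the Batyrev--Givental monomial.

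Finally I would show that neither the Maurer--Cartan correction $b$ nor the higher-complexity split graphs disturb these leading monomials. Since $b$ has strictly positive $T$-valuation, every term of $m_n^b - m_n$ carries a factor of $b$ and hence strictly higher valuation than a basic disk, and any split graph with more than one bulk vertex has total area strictly greater than the minimal $\ell_i$. Consequently $W(b) = \sum_i T^{\ell_i} z^{v_i} + (\text{higher order in } T)$, matching the leading terms of the Batyrev--Givental potential. The main obstacle will be making the valuation bookkeeping precise, in particular showing that no accidental cancellation among higher-order split contributions can lower the valuation of a correction term below the basic disk order; I would handle this by filtering $CF_{\split}(\XX,L,\etasp)$ by symplectic area and tracking the action of $\F$ and the $b$-deformation on the associated graded, reducing the question to the leading-order enumeration above.
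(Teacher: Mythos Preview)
Your proposal has a genuine gap in the first step. You propose to take the ``trivial'' broken manifold $\XX$ consisting of the single piece $X_{P_0}=X$ with the torus-invariant divisors declared to be relative divisors. But in the framework of the paper, relative divisors are by definition the cut loci $\ol X_Q$ for polytopes $Q\subsetneq P_0$ in the polyhedral decomposition $\PP$; they do not come for free. If $\PP$ is the trivial decomposition there are no such $Q$ and hence no relative divisors at all, so the tropical-torus hypothesis of Definition~\ref{def:trop-torus} (every torus-invariant divisor of $\ol X_{P_0}$ is relative) fails and Corollary~\ref{cor:mc} does not apply. Nor can you place the cuts exactly along the facets of the moment polytope: Definition~\ref{def:tropmanifold-sp} requires a \emph{free} $T_P$-action on a neighborhood of $\Phi^{-1}(P)$, which breaks down at the toric boundary where isotropy appears. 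Without a genuine polyhedral decomposition there are no split edges, the split Fukaya algebra collapses to the ordinary one, and your ``split graph'' enumeration is just a rewording of the Cho--Oh count with none of the paper's machinery engaged.

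The paper's proof (Proposition~\ref{prop:blas}) fixes this by taking a nontrivial decomposition: one single cut parallel to, and at distance $\eps_i$ from, each facet of the moment polytope. The central piece $\ol X_{P_0}$ is then symplectomorphic to $X$ with a slightly shrunken polytope, its torus-invariant divisors are honest relative divisors (the cut loci, where the circle action is free), and $L$ is a tropical torus. With this in place Corollary~\ref{cor:mc} yields unobstructedness. For the potential, each facet $D_i$ contributes a two-component broken disk (a Maslov-two disk in $\ol X_{P_0}$ attached to a fiber sphere in the $\P^1$-bundle piece $\ol X_i$), and one tracks its limit as a split map to recover the Batyrev--Givental monomial. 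The higher-order analysis then uses the smallness of the $\eps_i$ to argue that any other split configuration has strictly larger area than one of these basic disks. Your filtration-by-area idea is in the right spirit for this last step, but it only makes sense once you have actual neck pieces to split along.
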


The paper is organized as follows. Background from \cite{vw:trop} is
summarized in Section \ref{sec:background}. The proof of Theorem
\ref{thm:tfuk} takes up Sections \ref{sec:defmap}-
\ref{sec:homo2split}. Corollaries \ref{cor:mc} and \ref{cor:fooo} are
proved in Section \ref{sec:apps}.

\section{Background}\label{sec:background}
We recall some background on broken manifolds and broken maps from
\cite{vw:trop}.  We freely use the definitions and results from
\cite{vw:trop}, with the convention that a reference of the form
T-(equation number), Theorem T-(theorem number) and so on refers to
the earlier paper.
\subsection{Broken manifold}   

We start by recalling the notion of a broken manifold associated with a tropical Hamiltonian action.  Let $n >0$ be an integer and $T \simeq (S^1)^n$ a torus with Lie
algebra $\t \cong \R^n$.  A \em{simplicial polyhedral decomposition}
of $\t^\dual$ is a collection
\[ \PP = \{ P \subset \t^\dual \} \]
of simple polytopes whose interiors $P^\circ$ cover $\t^\dual$, with
the property that for any $\sig_1, \dots, \sig_k \in \PP$,
the intersection
$\sig_1 \cap \dots \cap \sig_k$ is a polytope in $\PP$ that is a face
of each of the polytopes $\sig_1,\dots,\sig_k$.
Any polytope $P$ in a simplicial polyhedral decomposition $\PP$
corresponds to a sub-torus $T_P \subseteq T$ whose Lie algebra is
$\t_P:=\ann(TP)$.
Thus $P$ and $T_P$ have complementary dimensions, and $T_P=\{\Id\}$
for top-dimensional polytopes $P \in \PP$.

\begin{definition} \label{def:tropmanifold-sp} {\rm(Tropical
    Hamiltonian action)} A \em{tropical Hamiltonian action}
  $(X,\PP,\Phi)$ consists of
  \begin{enumerate}
  \item \label{part:trop1} a simplicial polyhedral decomposition $\PP$
    of $\t^\dual$, and
  \item \label{part:trop2} a compact symplectic manifold $(X,\om_X)$
    with a \em{tropical moment map}
    \[\Phi:X \to \t^\dual, \]
    such for any $P \in \PP$ there is a neighborhood $U_P \subset X$
    of $\Phinv(P)$ on which the projection
    \[U_P \to \t^\dual \to \t_P^\dual\]
    is a moment map for a free Hamiltonian action of a torus $T_P$.
  \end{enumerate}
 We assume, without stating, that any polyhedral
decomposition we encounter possesses a cutting datum, which is described below.
\end{definition}

\noindent

Given a tropical Hamiltonian action $X$ the output of a multiple cut
is a collection of \em{cut spaces}
\begin{equation}
  \label{eq:cutspace}
  X_P:=\Phinv(P^\circ), \quad P \in \PP, 
\end{equation}
where $P^\circ$ is the complement of the faces of $P$; the cut space
$X_P$ compactifies to an orbifold
\[\ol X_P=\Phinv(P)/\sim,\]
where $\Phinv(P)$ is a manifold with corners and the equivalence
relation $\sim$ quotients any codimension one boundary
$\Phinv(Q) \subset \Phinv(P)$, $\codim_P(Q)=1$ by the action of
$S^1 \simeq T_Q/T_P$. The boundary $\ol X_P \bs X_P$ of the compactification is a union of submanifolds $\ol X_Q$, $Q \subset P$, called \em{relative submanifolds}.  

A \em{broken manifold} corresponding to a multiple cut $(X,\PP)$ is a
disjoint union of thickenings of cut spaces
\begin{equation} \label{eq:xx} \XX:=\XX_\PP:=\bigsqcup_{P \in
    \PP}\XC_P.\end{equation}
For top-dimensional polytopes $P \in \PP$, $\XX_P:=X_P$ is the cut
space from \eqref{eq:cutspace}, and for lower dimensional polytopes
$P$, $\XX_P$ is a torus bundle over the cut space $X_P$
\begin{equation}
  \label{eq:xp-proj}
T_{P,\C} \to \XX_P \xrightarrow{\pi_{X_P}} X_P,   
\end{equation}
whose fiber $T_{P,\C}$ is the complexification of the compact torus
$T_P$.  The compactification of $\XC_P$ is a fibration
$\ol \XC_P \to \ol X_P$ with toric orbifold fibers.

\begin{figure}[h]
  \centering \scalebox{.8}{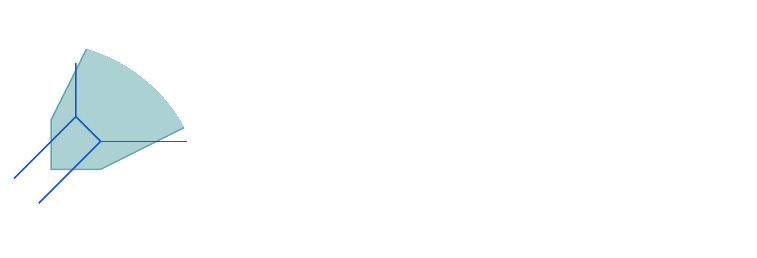}
  \caption{A polyhedral decomposition $\PP$, its dual complex $B^\dual$ and a cutting datum.}
  \label{fig:cutting}
\end{figure}

The broken manifold is equipped with the datum of a
\em{dual complex}
and a \em{cutting datum}. 
The dual complex 
 encodes the proportion in which the neck in
$X$ is stretched in different directions in order to produce the
broken manifold.  The dual complex
\[B^\dual= \cup_{P \in \PP}P^\dual/\sim,\]
consists of a complementary dimensional polytope denoted
$P^\dual \subset \t_P$ for every $P \in \PP$. For any pair of
polytopes $Q \subset P$, the dual polytope $Q^\dual$ has a face that
is (affine) isomorphic to $P^\dual$, and the equivalence $\sim$
identifies this face to $P^\dual$. Any polyhedral decomposition has a dual complex.
To realize broken manifolds as a
limit of neck-stretched manifolds, we also assume that
the polyhedral decomposition 
 has a 
cutting datum corresponding to the dual complex, which is needed for
identifying thickenings of the cut loci to subsets of the symplectic
manifold $X$. See Figure \ref{fig:cutting} and T-Section \ref{T-sec:symp-broken} for detailed definitions.

Components of broken manifolds have cylindrical structures on them and
some resulting translational symmetries.  For a polytope $P$, the
broken manifold $\XX_P$ is a $P$-cylinder in the sense that
\[\XX_P=Z_P \times \t_P\]
where 
$Z_P := \Phinv(P)$ is a $T_P$-principal bundle over $X_P$.
An element $t \in \t_P$ corresponds to a \em{translation} on $\XX_P$
\begin{equation}
  \label{eq:ptrans}
  e^t(z,\tau)=(z,\tau+ t), \quad z \in Z_P, \tau \in \t_P.  
\end{equation}
The manifolds $X_P$, $\XX_P$ have cylindrical ends in the following
sense: For any proper face $Q \subset P$, $Q \in \PP$, there is a neighborhood of
$U_{\ol X_Q} \ol X_P$ of $\ol X_Q$ in $\ol X_P$ such that for
$U_Q(X_P):= U_{\ol X_Q} \ol X_P \cap X_P$,
there is an embedding
\begin{equation}
  \label{eq:p2q}
  U_Q(\XX_P) \to \XX_Q, \quad \text{where } U_Q(\XX_P):=\pi_P^{-1}(U_Q(X_P)).   
\end{equation}
The subsets $U_Q(X_P) \subset X_P$ and
$U_Q(\XX_P) \subset \XX_P$ are called $Q$-cylindrical ends and are
equipped with $Q$-translation maps
\begin{equation}
  \label{eq:pqtrans}
  e^{-t} : U_Q(\XX_P) \to \XX_Q  
\end{equation}
all $t \in \Cone_{P^\dual}Q^\dual \subset \t_Q$ defined by composing the embedding in
\eqref{eq:p2q} with the translation in \eqref{eq:ptrans}.

Broken maps to a broken manifold are defined using a suitable class of
almost complex structures.  An almost complex structure $\JJ$ on a
broken manifold consists of an almost complex structure $\JJ_P$ on all
the compactified components $\ol \XX_P$. The almost complex structure
$\JJ$ are required to be \em{cylindrical}: On $\XX_P$, $\JJ_P$ is
$P$-cylindrical in the sense that it is $T_{P,\C}$-equivariant and the
fibers of $\pi_{X_P}$ in \eqref{eq:xp-proj} are biholomorphic to the
complex torus $T_{P,\C} \simeq (\C^\times)^k$. Automatically, there is a \em{base
  almost complex structure} $J_P:=\d\pi_{X_P}(\JJ_P)$ on $\ol X_P$
such that $\pi_{X_P}$ is
$(\JJ_P,J_P)$-holomorphic.  For $Q \subset P$, $\JJ|\XX_P$ is
$Q$-cylindrical on $U_Q(\XX_P)$.  In the terminology of
\cite[Definition 5.9]{vw:trop}, such an almost complex structure $\JJ$
is $\XX$-cylindrical.  We denote by
\begin{equation}
  \label{eq:omxxtau}
  \J^\cyl_{\om_\XX,\tau}(\XX)
\end{equation}
the space of such cylindrical $\om_\XX$-tamed almost complex structures on $\XX$; $\om_\XX$-tamedness of $\JJ$ means that the base almost complex structure $J_P$ is $\om_{X_P}$-tamed for every $P \in \PP$.

\subsection{Broken maps}
Broken maps in a broken manifold arise as limits of sequences of holomorphic maps in a tropical manifold with neck-stretched almost complex structures. 
A broken map $u$ is modelled on a graph $\Gamma$ and consists of a
collection of holomorphic maps and a tropical structure on $\Gamma$,
which is then called a \em{tropical graph}.  The ``map part'' of $u$
is a collection of holomorphic maps on punctured curves
\[ u_v:C_v^\circ \to \XC_{P(v)}, \quad v \in \Ver(\Gamma), \]
that satisfy certain matching conditions (explained later in the
paragraph) on the lifts of nodal points.  Here, for any vertex $v$ of
$\Gamma$,
\[C_v^\circ :=C_v \bs \{\text{interior nodes}\}, \quad v \in
  \Ver(\Gamma),\]
is a punctured curve (possibly with boundary), each $C_v \subset C$ is
an irreducible component of a nodal curve $C$ modelled on $\Gamma$,
$P(v) \in \PP$ is a polytope, and thus, $\XX_{P(v)} \subset \XX$ is a
piece of the broken manifold.  The punctures in the domain are
removable singularities when $u_v$ is viewed as a map to the
compactification $\ol \XC_{ P(v)}$.  For any edge $e=(v_+,v_-)$ of
$\Gamma$, there is an assigned polytope
\[P(e) := P(v_+) \cap P(v_-) \in \PP,\] and an \em{edge direction}
$\cT_{e,\pm}$, which is an element in the integer lattice
$\t_{P(e),\Z} \subset \t_{P(e)}$.  The matching condition at a node
$w_e \in C$ corresponding to the edge $e=(v_+,v_-) \in \Edge(\Gamma)$
is that the map $(\pi_{\cT(e)}^\perp \circ u_{v_\pm})$ has a removable
singularity at the node $w_e^\pm$, and
\begin{multline}
  \label{eq:match-proj}
  \text{(Projected matching condition)} \\
  (\pi_{\cT(e)}^\perp \circ u_{v_+})(w_e^+)=(\pi_{\cT(e)}^\perp \circ u_{v_-})(w_e^-) \in \XC_{P(e)}/T_{\cT(e),\C}.
\end{multline}
The quantities in the left-hand side and right-hand side of
\eqref{eq:match-proj} are called \em{projected tropical
  evaluations}.  The ``tropical part'' of $u$ consists of polytope
assignments $P(v)$ for the vertices $v \in \Ver(\Gamma)$ and the
directions $\cT(e)$ for the edges $e \in \Edge(\Gamma)$, which together
constitute the \em{tropical graph} of the broken map. These are
required to be \em{realizable} in the following sense: There is a
collection of \em{tropical vertex positions}
\begin{equation} \label{eq:tv} \cT(v) \in P(v)^{\dual,\circ} \subset
  B^\dual, \quad v \in \Ver(\Gamma)
\end{equation}
satisfying the following \em{direction condition}
\begin{equation} \label{eq:bslope} {\text{\rm (Direction condition)}}
  \quad \cT(v_+) - \cT(v_-) \in \R_{> 0} \cT(e) .
\end{equation}
Changing the tropical vertex positions $\{\cT(v)\}_v$ produces a
different realization of the same tropical graph; an example $\Gamma_2$ is shown
in Figure \ref{fig:rigid}.
\begin{figure}[ht]
  \centering \scalebox{1}{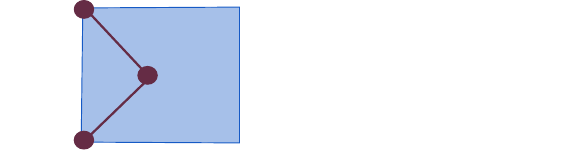}
  \caption{The tropical graph $\Gamma_1$ is rigid, but
    $\Gamma_2$ is not rigid since the vertices inside the
    square can be moved to the dotted positions.}
  \label{fig:rigid}
\end{figure}

Broken maps may also contain nodes corresponding to the standard nodal
degeneration encountered in Gromov-Witten theory. The edges
corresponding to such nodes do not appear in the tropical graph, and
thus, the set of edges of $\Gamma$ has a partition
\[\Edge(\Gamma)=\Edge_\trop(\Gamma) \cup \Edge_{\on{int}}(\Gamma)\]
into \em{tropical edges} and \em{internal edges}.  The object $u$
defined so far is an \em{unframed broken map}.

A \em{broken map} consists of an unframed broken map as in the
previous paragraph and a \em{framing} defined as follows.  For an
unframed broken map, at any tropical node $w_e$ corresponding to an
edge $e$, there exist holomorphic coordinates $z_+$, $z_-$ in
neighborhoods of the nodal lifts $w_e^+$, $w_e^-$ such that
\begin{equation} \label{eq:nodematch-intro}
  \text{(Matching condition)} \quad \lim_{z_+ \to  0}z_+^{-\cT(e)}u=\lim_{z_- \to 0}z_-^{-\cT(e)}u.
\end{equation}
The coordinates $z_+$, $z_-$ are called \em{matching coordinates} at
the node $w_e$.  The quantities in the left-hand side and right-hand
side of \eqref{eq:nodematch-intro} are called the \em{tropical
  evaluations} at $w_e^+$ and $w_e^-$ respectively.  A \em{framing} is a
linear isomorphism
\begin{equation} \label{eq:framingeq} \fr_e : T_{w^+_e} C_{v_+}
  \tensor T_{w^-_e} C_{v_-} \to \C,
\end{equation}
such that any pair of holomorphic coordinates $z_+$, $z_-$ in the
neighborhood of $w_e^+$, $w_e^-$ satisfying
\begin{equation}
  \label{eq:framingcoord}
  \d z_+(w^+_e) \tensor \d z_-(w^-_e)=\fr_e
\end{equation}
are matching coordinates as in \eqref{eq:nodematch-intro}.  An
unframed broken map has a finite number of framings as explained in
Remark T-\ref{T-rem:nfr}.

\subsection{Tropical symmetry}
The set of broken maps of a fixed combinatorial type has the action of a tropical symmetry group arising from translations on the neck regions. 
  For a tropical graph $\Gamma$, the tropical symmetry group
  $T_{\on{trop}}(\Gamma)$, from Definition T-\ref{T-def:tsym}, 
  is generated by the degrees of freedom of
  $\Gamma$: Each element of the real part of this
  group corresponds to ways of moving the vertex positions
  $\{\cT(v)\}_v$ without changing edge directions $\{\cT(e)\}_e$, as
  shown in Figure \ref{fig:rigid}.  In particular, the symmetry group
  $T_{\on{trop}}(\Gamma)$ is finite if in the tropical graph vertex
  positions $\cT(v)$ are uniquely \index{Rigid!  tropical graph}
  determined by the edge directions $\cT(e)$.  Such tropical graphs
  are called \em{rigid}.  In Figure
  \ref{fig:rigid}, there is one degree of freedom in moving the edges of
  the tropical graph $\Gamma_2$, and this generates a one-dimensional complex
  torus $T_\trop(\Gamma_2)$.

\subsection{Broken treed holomorphic disks}\label{subsec:broken-hol-disks}
Broken treed holomorphic disks combine the features of broken maps and
treed holomorphic disks, where the latter is similar to pearly
trajectories in Biran-Cornea \cite{bc:ql}.  The domain is a \em{treed
  disk} (Definition T-\ref{T-def:treeddisk}), which is a marked nodal disk with disk nodes
replaced by tree segments, and semi-infinite tree segments attached at
boundary markings.  The \em{type} of a treed disk is a graph $\Gamma$
underlying the marked nodal disk, whose vertex set has a partition
\[\Ver(\Gamma)=\Ver_\white(\Gamma) \cup \Ver_\black(\Gamma)\]
into subsets of vertices corresponding to disk components and sphere
components respectively, and the edge set has a partition
\[\Edge(\Gamma)=\Edge_\white(\Gamma) \cup \Edge_\black(\Gamma)\]
into subsets of edges corresponding to boundary nodes and interior
nodes respectively, and the edges in $\Edge_\white(\Gamma)$ have the
additional data of whether each edge has length zero, finite non-zero,
or infinite.  The treed disk $C=S \cup T$ is a union of the surface
part $S$ and a tree part $T$, each made up of components
\[S=\cup_{v \in \Ver(\Gamma)}S_v, \quad T=\cup_{e \in
    \Edge(\Gamma)}T_e.\]
The moduli space of curves of type $\Gamma$ is denoted by $\M_\Gamma$
with compactification $\ol \M_\Gamma$ which is a smooth manifold. The
universal curve $\ol \U_\Gamma \to \ol \M_\Gamma$ partitions into the
surface part $\ol \S_\Gamma$ and tree part $\ol \T_\Gamma$, that is,
$\ol \U_\Gamma=\ol \S_\Gamma \cup \ol \T_\Gamma$.

Broken maps from disks are required to satisfy a Lagrangian boundary condition. 
We consider a Lagrangian submanifold $L \subset X_{P_0}$ with brane
structure in a top-dimensional cut space $X_{P_0}$ of the broken
manifold $\XX$, that is equipped with a Morse function $F: L \to \R$.  A
\em{broken treed holomorphic disk} is a broken holomorphic disk in the
broken manifold $\XX$ whose boundary lies in $L$, with the additional
feature that treed segments map to gradient trajectories of $F$ in the
Lagrangian submanifold. See Figure T-\ref{T-fig:brokendisk}.  To ensure that the moduli
spaces of broken disks are transversally cut out, the almost complex
structure $J$ and Morse function $F$ are perturbed in a
domain-dependent way using the Cieliebak-Mohnke perturbation scheme,
which uses a broken Donaldson-type divisor in $\XX$, which we call the
\em{stabilizing divisor}.  To define moduli spaces of broken maps, we
fix a coherent system of perturbations $\ul \Pe=(\Pe_\Gamma)_\Gamma$
where $\Gamma$ ranges over all domain treed disk types (without
tropical structure).  The \em{type} $\Gamma$ of a broken treed disk
consists of the domain treed disk type, the tropical structure, and
the intersection multiplicities with the stabilizing divisor at
interior markings.  The \em{moduli space of broken treed holomorphic
  disks} of type $\Gamma$ and leaves labelled by
\[ \ul x = ( x_0,\dots, x_{d(\white)} \in \crit(F))  \] 
is denoted by
$\M^\br_\Gamma(L, \ul \Pe, \ul x)$. The tropical symmetry group
$T_\trop(\Gamma)$ acts freely on the moduli space of broken
holomorphic disks, and the quotient
\[\M_{\Gamma,\red}^\br(\XX,L,\Gamma, \ul x):= \M^\br_\Gamma(\XX,L,\ul
  x)/T_\trop(\Gamma).\]
is called the \em{reduced moduli space}.  A broken holomorphic treed
disk type $\Gamma$ is \em{rigid} if it can not be deformed to a
different type, that is, if all its non-tropical nodes are disk nodes
with a treed segment of finite non-zero length, and its tropical graph
is rigid, and all intersections with the stabilizing divisor are
simple.

\subsection{Convergence of broken maps}\label{subsec:conv}
We describe the notion of Gromov convergence for sequences of broken maps.  
The Gromov limit of a sequence of broken disks of type $\Gamma$ may have configurations with a different tropical graph $\Gamma'$, in which case, the graph $\Gamma$ can be recovered from $\Gamma'$ by
collapsing a subset of edges. The edge collapse $\Gamma' \to \Gamma$
respects the tropical structure in the following sense:

A \em{tropical edge collapse} is a morphism of tropical graphs
$\Gamma' \xrightarrow{\kappa} \Gamma$ that collapses a subset of edges
$\Edge(\Gamma') \backslash \Edge(\Gamma)$ in $\Gamma'$ inducing a
surjective map on the vertex sets
\[ \kappa:\Ver(\Gamma') \to \Ver(\Gamma), \]
and satisfies the following conditions:
\begin{enumerate}
\item for any vertex $v \in \Ver(\Gamma')$,
  $P(v) \subseteq P(\kappa(v))$; and
\item the edge {direction} is unchanged for uncollapsed edges, i.e. if
  $\cT$, $\cT'$ are the edge {direction} functions for $\Gamma$,
  $\Gamma'$, then $\cT(\kappa(e)) = \cT'(e)$ for any uncollapsed edge
  $e \in \Edge(\Gamma')$.
\end{enumerate}

Convergence of sequences of broken maps is modulo
translation in the target space, as some components
may escape into cylindrical ends in the limit. 
For example, for a sequence of maps $u_\nu$ of type $\Gamma$, and a
vertex $v$ of $\Gamma$, if the components
$u_{\nu,v} : C_\nu \to X_{P(v)}$ collapse into the $Q$-cylindrical end
for some $Q \subset P$ as $\nu \to \infty$, the corresponding Gromov limit is given by the convergence of the sequence of maps $e^{-t_\nu(v)}u_{\nu,v}$
where $t_\nu(v) \in \t_Q$ is a translation in the $Q$-cylinder (see \eqref{eq:p2q}).
For any $\nu$, the collection of translations $t_{\nu,v}$ ranging over all vertices $v$ satisfies the direction condition, and is called a  \em{relative translation sequence} (called so, because
the translations are `relative' to positions in $\Gamma$) defined as below. 

\begin{definition} \label{def:relweight} {\rm(Relative translations)}
  \index{Translation!Relative translation} Suppose
  $\kappa: \Gamma' \to \Gamma$ is a tropical edge-collapse morphism.
  For any vertex $v$ of $\Gamma'$, let
  \begin{multline}\label{eq:conekv}
    \Cone(\kappa,v) := \Cone_{ P(\kappa(v))^\dual}(P(v)^\dual)\\
    :=\R_{\geq 0}\{t - t_0 \in \t: t \in P(v)^\dual, t_0 \in
    P(\kappa v)^\dual,\alpha \in \R_{\geq 0}\} \subset \t_{P(v)}
  \end{multline}
 A \em{relative translation} or a  $(\Gamma',\Gamma)$-translation is an element
  \(t =(t(v) \in \Cone(\kappa,v))_{v \in \Ver(\Gamma')} \)
  satisfying
  \begin{equation}\label{eq:direction-rel} \text{(Direction condition)} \quad 
    t(v_+) - t(v_-) \in
    \begin{cases}
      \R_{\geq 0} \cT(e), & e \in \Edge(\Gamma') \bs \Edge(\Gamma),\\
      \R\cT(e), & e \in \Edge(\Gamma)
    \end{cases}
  \end{equation}
  for any edge $e=(v_+,v_-)$ in $\Gamma'$. The space of $(\Gamma',\Gamma)$-translations is denoted by $w(\Gamma',\Gamma)$. 
 \end{definition}

Relative translations are closely related to ``relative vertex
positions'', which are differences between vertex positions of
$\Gamma'$ and $\Gamma$ (Definition \ref{def:rvp}): Given a
$(\Gamma',\Gamma)$-translation $t$ and a vertex position $\cT$ for the
graph $\Gamma$, for small enough $\eps>0$, $\cT + \eps t$ is a vertex
position for $\Gamma'$. In other words, $w(\Gamma',\Gamma)$ is the
cone generated by relative vertex positions.

\begin{example}
  In Figure \ref{fig:rigid}, collapsing the edge $e$ gives a tropical edge collapse $\kappa : \Gamma_2 \to \Gamma_1$. The set of vertex position maps for $\Gamma_2$, denoted by $\W(\Gamma_2)$,  is $(0,1)$. Here, $0$ resp. $1$ corresponds to the vertex position  map where the edge $e_3$ resp. edges $e_1$ and $e_2$ have lengths zero. The cone $\fw(\tGam,\Gamma)$ generated by relative vertex positions is $\Cone_0[0,1] \simeq \R_{\geq 0}$. 
\end{example}

With this terminology, we can now state the definition
of Gromov convergence.   
A sequence of broken maps $u_\nu : C_\nu \to \XX$ of type $\Gamma$ Gromov converges to a map $u$ of type $\Gamma'$ if there is a tropical
edge collapse $\kappa: \Gamma' \to \Gamma$ and a sequence $(t_\nu)_\nu$ of relative
translations such that the for any $v \in \Ver(\Gamma')$, the
translated sequence of maps $e^{-t_\nu(v)}u_{\nu,\kappa(v)}$ converges
to $u_v : C_v \to \XX_{P(v)}$ under appropriate domain
identifications.  If the tropical graphs $\Gamma'$, $\Gamma$ are
distinct, then, a non-zero relative translation $t$ generates a
subgroup $T_{\trop}(\Gamma')$ that is not contained in
$T_{\trop}(\Gamma)$, and therefore, the former has higher dimension
than the latter.
  
\subsection{Boundary strata}

To define broken Fukaya algebras, we consider moduli spaces of broken
maps of rigid type with expected dimension zero and one. We describe
the configurations in the compactification of such a stratum
$\M^\br(\XX,L,\Gamma, \ul x)$. The compactification
$\ol \M^\br(\XX,L,\Gamma, \ul x)$ does not contain configurations with
additional interior nodes than those of $\Gamma$. Indeed, non-tropical
interior nodes are ruled out because they occur in codimension two
strata, and tropical nodes are ruled out as follows: Suppose a
sequence of maps of type $\Gamma$ converges to a limit of map type
$\Gamma'$.  Additional tropical nodes do not contribute negatively to
the expected dimension formula for broken maps because the matching
condition at a tropical edge has codimension $(\dim X - 2)$, and
therefore, the moduli spaces $\M^\br(\Gamma, \ul x)$ 
and
$\M^\br(\Gamma', \ul x)$ have the same dimension, which is at most
one.  However, the tropical graph $\Gamma'$, when it is distinct from
$\Gamma$, necessarily has a larger tropical symmetry group. Since
$\Gamma$ is rigid, $T_\trop(\Gamma)$ is zero-dimensional, and
$T_\trop(\Gamma')$ is at least two-dimensional. We obtain a
contradiction because $T_\trop(\Gamma')$ acts freely on
$\M^\br(\XX,L,\Gamma', \ul x)$.  Therefore, the codimension one
boundary in the compactification $\ol \M^\br(\XX,L,\Gamma', \ul x)$
consists of configurations with a disk node whose treed segment has
length zero or infinity.  Of these two possibilities, a stratum
containing a zero length edge is actually a \em{fake boundary stratum}
as it is the boundary of two different rigid strata with opposite
induced boundary orientations. Thus, the \em{true boundary strata} are
those that have a broken boundary edge. (See Section
T-\ref{T-sec:tubular} and Figure T-\ref{T-fig:true-fake}.)

  \subsection{Broken Fukaya algebras}

Counts of rigid broken maps define the broken Fukaya algebra, as follows. 
  For a coherent system of perturbations $\ul \Pe$, the \em{broken
    Fukaya algebra} is an \ainfty 
    algebra whose underlying graded
  vector space is
  \[CF_{\br}(L, \ul \Pe):=CF^{\on{geom}}(L, \ul \Pe) \oplus \Lam
    x^{\greyt}[1] \oplus \Lam x^{\whitet},\]
  where $CF^{\on{geom}}(L,\ul \Pe)$ is the vector space generated by
  the set of critical points of the Morse function $F$ over the
  Novikov ring $\Lam_{\geq 0}$ (see \cite[Section 10.1]{vw:trop}), and
  the additional generators $x^{\greyt}$, $x^{\whitet}$ are added as
  part of the homotopy unit construction (see \cite[Section
  10.3]{vw:trop}) so that the perturbation systems admit forgetful
  maps, and the \ainfty algebra has a strict unit.  For any
  $d(\white) \geq 0$, the $d(\white)$-ary composition maps on the
  broken Fukaya algebra is
  \begin{equation*}
    m^{d_\white}_\br(x_1,\dots,x_{d_\white})= \sum_{x_0,u \in {\tM^\br_{\Gamma}(\ul \Pe, \ul{x})_0}} w_s(u) x_0,
  \end{equation*}
  where the type $\Gamma$ of $u$ ranges over all rigid broken map
  types with $d_\white$ inputs, and
  \begin{equation}
    \label{eq:wtwsubr}
    w_s(u):=(-1)^{\heartsuit}(d_\black(\Gam)!)^{-1}  \Hol([\partial u]) \eps(u)
    q^{A(u)},
  \end{equation}
  where $d_\black(\Gam)$ is the number of interior markings, 
  the holonomy $\Hol$ is part of the brane structure of the Lagrangian and the sign $\eps(u) \in \{\pm 1\}$ is as in
  T-\eqref{T-eq:wtwu}. The \ainfty
  algebra $CF_\br(\ul \Pe, L)$ is convergent and has a strict unit
  $x^\whitet$.

  The main result of \cite{vw:trop} is that the broken Fukaya algebra
  $CF_\br(\XX,L)$ is \ainfty homotopy equivalent to the ordinary
  Fukaya algebra $CF(X,L)$, whose composition maps are defined by
  counting treed holomorphic disks in $X$ with boundary in $L$.

  \begin{theorem} \label{thm:bfuk} For a rational Lagrangian
    submanifold $L \subset X$ and polyhedral decomposition $\cP$ as
    above, the unbroken Fukaya algebra $CF(X,L)$ admits a curved
    $A_\infty$ homotopy equivalence to the broken Fukaya algebra
    $CF_{\br}(\XX,L)$.
  \end{theorem}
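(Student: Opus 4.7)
The plan is to establish the equivalence by a one-parameter neck-stretching interpolation between the symplectic manifold $X$ and the broken manifold $\XX$. Choose a family $\{J_\nu\}_{\nu \geq 0}$ of $\om$-tamed cylindrical almost complex structures on $X$ whose necks, corresponding to the codimension-one strata of $\cP$, elongate as $\nu \to \infty$; together with a coherent Cieliebak-Mohnke perturbation system subordinate to a Donaldson-type stabilizing divisor that extends to a broken divisor in $\XX$, this produces a family of Fukaya algebras whose $\nu \to \infty$ limit is the broken Fukaya algebra $CF_\br(\XX,L)$.

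First I would establish a neck-stretching compactness theorem of SFT type: for a sequence $\nu_n \to \infty$ and $J_{\nu_n}$-holomorphic treed disks $u_n : C_n \to X$ of bounded energy with fixed topological inputs, a subsequence Gromov converges, in the sense of Section~\ref{subsec:conv}, to a broken treed disk $u_\infty : C_\infty \to \XX$. The limit tropical graph is determined by the relative rates at which components of $u_n$ escape into the various necks; the matching conditions at tropical nodes reflect the $T_{P(e)}$-periodicity of these escape rates; and the framings \eqref{eq:framingeq} record the limiting nodal coordinate derivatives. Counting rigid limits, with signs and weights as in \eqref{eq:wtwsubr}, defines a chain-level map $\F : CF(X,L) \to CF_\br(\XX,L)$.

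Next I would construct an inverse by gluing. Given a rigid broken treed disk $u \in \M^\br_\Gamma(L, \ul\Pe, \ul x)$, whose tropical symmetry group $T_\trop(\Gamma)$ is finite by rigidity, one produces for each sufficiently large $\nu$ an approximate $J_\nu$-holomorphic disk in $X$ by exponential pre-gluing along the cylindrical ends---with gluing parameters determined by the framing \eqref{eq:framingcoord}---and corrects it to a genuine solution via the implicit function theorem. Counting the resulting glued disks, divided by $|T_\trop(\Gamma)|$ to account for the passage to the reduced moduli space, recovers the composition maps of $CF(X,L)$. Extending the construction to a one-parameter family of neck lengths, and counting isolated points in the parametrized moduli space, produces the $A_\infty$ homotopies witnessing that $\F$ is a homotopy equivalence.

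The main difficulty will be controlling transversality, compactness, and the homotopy unit structure simultaneously along the interpolation. Transversality in the broken limit requires the stabilizing divisor and the domain-dependent perturbations to be chosen coherently so that the reduced moduli spaces $\M^\br_{\Gamma,\red}$ are cut out transversely for every rigid $\Gamma$; this is the main technical input. Compactness along the neck-stretching family requires uniform energy and $C^0$ bounds for maps with cylindrical asymptotics, leveraging the projection structure \eqref{eq:xp-proj} and a removal-of-singularities statement at punctures of the limit components. Finally, the identification of true versus fake codimension-one boundary strata---with zero-length treed edges giving fake boundary as in Section T-\ref{T-sec:tubular}, and only broken boundary edges contributing to the $A_\infty$ relations---must be executed with care so that the relations for $\F$ and the homotopies hold with the correct signs and the strict unit $x^{\whitet}$ is preserved.
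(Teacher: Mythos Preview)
This theorem is not proved in the present paper; it is the main result of the companion paper \cite{vw:trop} and is quoted here as background. That said, the present paper carries out an entirely analogous argument (Propositions \ref{prop:defmorph} and \ref{prop:hequiv}) relating deformed and split Fukaya algebras, so one can infer the structure of the proof in \cite{vw:trop}.

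Your overall ingredients---SFT-type compactness under neck stretching, gluing of rigid broken disks, and a boundary analysis separating true from fake boundary strata---are correct and essential. But there is a genuine gap in how you assemble them into an $A_\infty$ morphism. You write that ``counting rigid limits \ldots\ defines a chain-level map $\F : CF(X,L) \to CF_\br(\XX,L)$''. This is not how an $A_\infty$ morphism arises: the rigid limits of $J_{\nu_n}$-holomorphic treed disks are precisely the broken disks that define the \emph{structure maps} $m^d_\br$ of the target algebra, not the components $\F^d$ of a morphism. An $A_\infty$ morphism requires counting a different kind of object---here, \emph{quilted} treed disks for a perturbation morphism interpolating between $\ul\Pe^\nu$ and $\ul\Pe^{\nu+1}$ (see Definition T-\ref{T-def:pert-morph} and the quilted constructions surrounding \eqref{eq:dist-seam} in the current paper).

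The scheme actually used in \cite{vw:trop}, and replicated in Propositions \ref{prop:defmorph}--\ref{prop:hequiv} here, is as follows. Compactness and gluing are used not to define $\F$ directly but to establish that for each energy bound $E_0$ and all sufficiently large $\nu$, the zero-dimensional moduli spaces of rigid $J_\nu$-disks are in bijection with those of rigid broken disks. Quilted-disk counts then define $A_\infty$ morphisms $\phi_\nu^{\nu+1}$ between consecutive neck-length algebras, and the bijection forces these morphisms (and the twice-quilted homotopies) to agree with the identity modulo $q^{E_0}$. The desired equivalence $\F$ is obtained as the limit of the infinite composition $\cdots\circ\phi_{\nu_0+1}^{\nu_0+2}\circ\phi_{\nu_0}^{\nu_0+1}$, which converges precisely because of these modulo-$q^{E_0}$ estimates. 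Your ``one-parameter family'' idea is in the right direction, but the mechanism for producing the morphism components and controlling convergence over the Novikov ring is the quilted/composition-limit argument, not a direct count of limits.
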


  \section{Deformed maps}\label{sec:defmap}

  In this section, we introduce deformed maps which are defined
  similarly to broken maps but where the edge matching condition for
  split edges is replaced by a deformed matching condition. We prove
  that the \ainfty algebra defined by counts of deformed maps, called
  the \em{deformed Fukaya algebra}, is \ainfty homotopy equivalent to
  the broken Fukaya algebra.

  The matching condition is deformed on a subset of edges called
  \em{split edges}.  Roughly speaking, these are edges corresponding
  to neck pieces whose target is the complexified torus.
  \begin{definition}{\rm(Split edges)}
    \label{def:splitedge}
    Let $\PP_s \subset \PP$ be the set of polytopes $P$ for which
    \begin{enumerate}
    \item the tropical moment map $\Phi$ generates a $T/T_P$-action on
      the cut space $\ol X_P$ and this action makes $\ol X_P$ a toric
      manifold,
    \item and any torus-invariant divisor $D$ of $\ol X_P$ is a
      relative divisor. That is, there is a polytope $Q \subset P$
      such that $\ol X_Q=D$.
    \end{enumerate}
    For a tropical graph $\bGam$, $e \in \Edge(\bGam)$ is a \em{split
      edge} if $P(e) \in \PP_s$. The set of split edges of a tropical
    graph is denoted by
    \[\Edge_s(\bGam) \subset \Edge(\bGam).\]
  \end{definition}

  By definition, a split edge $e$ satisfies the property that the
  space $\ol \XC_{P(e)}$ is a toric variety that is a compactification
  of $T_\C$, and the same is true of the space $\ol \XC_Q$ where
  $Q \subset P(e)$ is a face of the polytope $P(e)$.

\begin{example}
  An edge whose polytope $P(e) \in \PP$ is zero dimensional is a split
  edge. The example of more interest to us will be a top-dimensional
  component $\ol X_{P_0} \subset \XX$ which is a toric variety, and
  all whose torus-invariant divisors are relative divisors. That is,
  for any torus-invariant divisor $D \subset \ol X_{P_0}$, there is a
  polytope $Q \subset P$ in $\PP$ such that $D=\ol X_Q$.
\end{example}

\begin{definition}
  A \em{deformation parameter} for a tropical graph $\bGam$ is an
  element
  \begin{equation}
    \label{eq:taudef}
    \eta = (\eta_e)_{e \in \Edge_s(\bGam)} \in 
    \bigoplus_{e \in \Edge_s(\bGam)}\t/\t_{\cT(e)} =:\t_{\Gam},      
  \end{equation}
  where $\t_{\cT(e)} \subset \t$ is the linear span $\bran{\cT(e)}$ of
  the direction $\cT(e) \in \t_{P(e),\Z}$ of the edge $e$.
\end{definition}

The following is a preliminary definition of a deformed map with a
fixed deformation parameter.

\begin{definition}\label{def:deformpre}
  {\rm(Deformed map)} Let $\eta$ be a deformation parameter for a
  tropical graph $\Gamma$. An $\eta$-deformed map of type $\Gamma$ is
  the same as a broken map of type $\Gamma$ with the difference that
  at any split edge $e=(v_+,v_-) \in \Edge_s$, the matching condition
  \eqref{eq:match-proj} at the corresponding node $w_e$ is replaced by
  the \em{deformed matching condition}
  \begin{equation}\label{eq:defmatch}
    (\pi_{\cT(e)}^\perp \circ u_{v_+})(w_e^+)=e^{i\eta_e}(\pi_{\cT(e)}^\perp \circ u_{v_-})(w_e^-) \in \XC_{P(e)}/T_{\cT(e),\C}.
  \end{equation}
  Here $\cT(e) \in \t_{P(e),\Z}$ is the edge direction, and
  $\eta_e \in \t/\t_{\cT(e)}$ is the deformation parameter for the
  edge $e$.
\end{definition}

Compactifications of moduli spaces of deformed maps of a fixed type
$\Gamma$ may contain maps with a larger tropical graph $\Gamma'$ with a collapse morphism to $\Gamma$,
in which case, we need to ``remember'' the set of split edges while
passing from $\Gamma$ to $\Gamma'$. To address this requirement, the
domain of a deformed map is a based curve, which carries information
of a base tropical graph with the data of split edges. The reader
might wish to keep in mind that for generic perturbation, the tropical
graph of a deformed map is the same as the base tropical
graph. Deformed maps whose tropical graph is larger than the base tropical graph
occur in the compactifications of moduli spaces that have dimension at
least two; however to define \ainfty algebras, we only work with
moduli spaces of dimension at most one.
 
\begin{definition} \label{def:basedcurve} {\rm(Based curves)}A
  \em{curve with base type} consists of
  \begin{enumerate}
    \index{Disk! Treed disk with base type}
  \item a stable treed disk $C$ of type $\tGam$,
  \item a \em{base tropical graph} $\bGam$, \index{Base tropical
      graph} \index{Tropical graph!Base tropical graph}
  \item and an edge collapse morphism $\kappa:\tGam \to \bGam$ that
    necessarily collapses all disk edges $e \in \Edge_\white(\tGam)$
    and treed segments $T_e$, $e \in \Edge_\white(\tGam)$, and
    possibly some interior edges. (The map $\kappa$ is an edge
    collapse of graphs, and not of tropical graphs, because $\tGam$
    does not have a tropical structure.)
  \end{enumerate}
  The \em{type} of such a curve consists of the datum
  \index{Combinatorial type! of a curve with base} \index{Based!curve
    type} \index{Moduli space!of treed disks with base
    $\M_{\tGam,\Gamma}$} $\tGam \xrightarrow{\kappa} \bGam$.  The
  moduli space of stable treed curves of type $\tGam$ with base type
  $\bGam$ is denoted by $\M_{\tGam,\Gamma}$, or simply $\M_{\tGam}$
  when the context allows it.  The topology on the space of based
  curves is the standard topology on the space of stable curves with
  the additional axiom that the base tropical graph is preserved in
  the limit.  This ends the Definition.
\end{definition}
\noindent 
The compactified moduli space of based curves is a product
\[\ol \M_{\tGam, \Gamma} = \ol \M_{\tGam_1} \times \dots \times \ol
  \M_{\tGam_{s(\tGam)}}\]
where $\tGam_1,\dots, \tGam_{s(\tGam)}$ are the connected components
of $\tGam \bs \Edge_s(\Gamma)$, with each edge $(v_+,v_-) \in \Edge_s(\Gamma)$ replaced by a pair of leaves on $v_+$ and $v_-$. 
Assuming that the disk vertices of
$\tGam$ lie in $\tGam_1$, $\ol \M_{\tGam_1}$ is the compactified
moduli space of treed disks, and the others $\ol \M_{\tGam_i}$,
$i \neq 1$, are moduli spaces of marked nodal spheres.

The deformation datum for a deformed map is domain-dependent and
satisfies coherence conditions under morphisms (Cutting edges),
(Collapsing edges), (Making an edge length/weight finite/non-zero),
(Forgetting edges) for based curve types. These are the same as
morphisms for ordinary curve types (Definition T-\ref{T-def:pertops}),
with the added feature that the base tropical graph is preserved by
all of the operations except (Cutting edges), where it gets changed in
the obvious way (Definition T-\ref{T-def:bgm}).
\begin{definition}\label{def:coh-deform}
  {\rm(Deformation datum)} A coherent deformation datum
  $\ul \eta=(\eta_{\tGam,\bGam})_{(\tGam,\bGam)}$ consists of a
  continuous map
  \[\eta_{\tGam,\bGam} : \ol \M_\tGam \to \t_\Gam \simeq \bigoplus_{e
      \in \Edge_s(\bGam)}\t/\t_{\cT(e)} \]
  for all types $(\tGam,\bGam)$ of treed disks with base, that are
  coherent based curve type morphisms (described above) and the
  following (Marking independence) axiom: For any type $\tGam$, the
  map $\eta_{\tGam,\bGam}|\M_\tGam$ factors as
  \begin{equation}
    \label{eq:taufactor}
    \text {\rm{(Marking independence)}} \quad 
    \eta_{\tGam,\bGam}=\eta_\tGam' \circ f_\tGam, \quad \M_{\tGam} \xrightarrow{f_\tGam} [0,\infty]^{\Edge_{\white,-}(\tGam)} \xrightarrow{\eta_\tGam'} \t_\Gam,  
  \end{equation}
  where $f_\tGam([C])$ is equal to the edge lengths $\ell(e)$ of treed
  components at the boundary nodes of the treed curve $C$.
\end{definition}
\begin{remark}\label{rem:gwspl}
  To define moduli spaces of deformed maps, we need to use a
  domain-dependent deformation datum, and not just a fixed deformation
  parameter; this is needed to achieve coherence with respect to the
  (Cutting an edge) morphism. Note that this requirement is absent for
  the moduli space of spheres. So, if one were defining moduli space
  of deformed spheres (instead of deformed disks), then the
  deformation datum can be taken to be constant on the moduli
  space. In other words, one could just assign a constant deformation
  parameter $\eta_e$ to each split edge $e$.
\end{remark}
A perturbation datum for deformed maps $\ul \Pe =(\Pe_\tGam)_\tGam$
consists of coherent perturbations (as in Definition
T-\ref{T-def:coherent})
\[\Pe_\tGam=(J_\tGam, F_\tGam), \quad J_\tGam : \ol \S_\tGam \to
  \J^\cyl_{\om_\XX,\tau}(\XX), \enspace F_\tGam : \ol \T_\tGam \to C^\infty(L,\R)\]
for all types $\tGam$ of treed disks, with the domain-dependent almost complex structures being
cylindrical and tamed (as in \eqref{eq:omxxtau}). 
The perturbation datum
$\Pe_\tGam$ does not depend on the base tropical graph.
\begin{definition} \label{def:admap} {\rm(Deformed holomorphic maps)}
  Let $\tGam \to \bGam$ be a type of based curve. Let $\Pe_\tGam$ be a
  perturbation datum and $\eta_\tGam$ be a deformation datum. A
  \em{$(\eta_\tGam, \Pe_\tGam)$-deformed holomorphic map} is the same as a $\Pe_\tGam$-holomorphic
  broken map $u:C \to \XX$ (see Definition
  T-\ref{T-def:pdisks}) with the difference that on any split edge
  $e \in \Edge_s(\bGam)$ the edge matching condition is $\eta_\tGam([C],e)$-deformed as in
  \eqref{eq:defmatch}.  The \em{combinatorial type} of a deformed holomorphic map includes the
 type data of a broken holomorphic map and the datum of the base tropical graph. 
 The type of a deformed holomorphic map is \em{rigid} if it is rigid in the sense of
 a broken holomorphic map type, that is, without considering the base tropical graph).
 The rigidity condition implies that the tropical graph $\tGam$ of the map is the same as the base tropical graph; otherwise $\tGam$ is not a rigid tropical graph.
\end{definition}
For a type $\tGam$ of deformed holomorphic maps 
and deformation datum $\ul \eta$, 
let
\[\tM^\deform_{\tGam}(\ul \Pe,\ul \eta) \]
denote the moduli space of $(\eta_\tGam, \Pe_\tGam)$-deformed
holomorphic maps modulo domain reparametrizations.  The tropical
symmetry group $T_\trop(\tGam)$ acts on the moduli space
$\tM^\deform_{\tGam}(\ul \Pe,\ul \eta)$, the tropical symmetry group
does not depend on the base tropical graph.

\begin{remark} 
  Compactified moduli spaces of  deformed holomorphic maps of different
  base tropical types do not intersect. Indeed, a stratum
  $\M_{\deform,\tGam_0}$ of deformed maps is contained in the
  compactification $\ol \M_{\deform,\tGam}$ only if $\tGam_0$ is
  obtained by (Collapsing edges) in $\tGam$ or by (Making an edge
  length/weight finite/non-zero) in $\tGam$, and both these morphisms
  preserve the base tropical graph.
\end{remark}

The following result describes
zero and one-dimensional moduli spaces of deformed holomorphic maps along with their
compactifications. 
The result is an analogue of the theorems for broken (undeformed)
maps; \eqref{part:pdtrans1} is an analogue of Theorem
T-\ref{T-thm:transversality}, and \eqref{part:pdtrans2},
\eqref{part:pdtrans3} are analogues of Proposition
T-\ref{T-prop:truebdry}.  The proofs are analogous.  Similar to the
case of broken maps (Definition T-\ref{T-def:crowded}), a type
$\Gamma$ of rigid deformed maps is \em{crowded} if there a surface
component corresponding to a vertex $v \in \Ver(\Gamma)$ on which the
projection $\pi_{P(v)} \circ u_v: S_v \to X_{P(v)}$ of the map is
constant, and is uncrowded otherwise.
\begin{proposition}\label{prop:pdtrans}
  Given a coherent deformation datum $\ul \eta$, there is a co-meager
  set $\PPe^{\reg}$ of coherent regular perturbations for which the
  following hold.
  \begin{enumerate}
  \item \label{part:pdtrans1} For any uncrowded type $\tGam$ of
    deformed holomorphic maps, regular perturbation
    $\Pe_{\tGam} \in \PPe^{\reg}$, and for any disk output and inputs
    $x_0,\dots,x_{d(\white)} \in \hat \cI(L)$ such that
    $i(\tGam,\ul x) \leq 1$, the moduli space
    $\tM_{\deform,\tGam}(\ul \Pe, \ul \eta, \ul x)$ is a manifold of
    expected dimension.
  \item \label{part:pdtrans2} Any one-dimensional component of the
    moduli space of rigid deformed broken treed disks
    $\tM_{\deform,\tGam}(\ul \Pe,\ul \eta,\ul x)$ admits a
    compactification as a topological manifold with boundary.  The
    true boundary is equal to the union of zero-dimensional strata
    whose domain treed disks $u: C \to \XX_{\cP}$ have a boundary edge
    $e \in \Edge_\white(\Gamma)$ that is broken.
  \item \label{part:pdtrans3} For any $E>0$, there are finitely many
    zero and one-dimensional components of the moduli space of rigid
    broken treed disks with area $\leq E$.
  \end{enumerate}
\end{proposition}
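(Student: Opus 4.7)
\medskip

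\noindent\emph{Proof plan for Proposition \ref{prop:pdtrans}.} The plan is to mimic the proofs of Theorem T-\ref{T-thm:transversality} (for part (\ref{part:pdtrans1})) and Proposition T-\ref{T-prop:truebdry} (for parts (\ref{part:pdtrans2}) and (\ref{part:pdtrans3})) for broken maps, and track the effect of the deformation of the matching condition at split edges. The key point is that for each split edge $e \in \Edge_s(\bGam)$, the deformation \eqref{eq:defmatch} replaces the projected evaluation equality by its translate under the diffeomorphism
\[
e^{i\eta_{\tGam,\bGam}([C],e)} : \XC_{P(e)}/T_{\cT(e),\C} \longrightarrow \XC_{P(e)}/T_{\cT(e),\C},
\]
which is the identity on tangent spaces up to the group action. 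Hence the linearized matching map at any $(\eta,\Pe)$-deformed map has the same rank as in the broken case, and the codimension of the matching condition at each tropical edge is the same as in \cite{vw:trop}.

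For (\ref{part:pdtrans1}), construct the universal moduli space parametrized by $\Pe_\tGam$ varying over the space of coherent perturbations, with $\ul \eta$ held fixed. The linearized operator decomposes as the direct sum of Cauchy-Riemann operators on the components $u_v$ (unchanged by the deformation) together with a matching map at each edge; at split edges this matching map has been composed with a fixed diffeomorphism, which preserves transversality. Combined with the uncrowdedness assumption and the standard variation of $J_\tGam$ along $\ol\S_\tGam$, surjectivity of the universal linearized operator holds as in T-\ref{T-thm:transversality}. Sard-Smale then produces the co-meager set $\PPe^{\reg}$, and the expected dimension formula is unchanged because codimensions of the deformed matching conditions agree with those of the broken ones.

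For (\ref{part:pdtrans2}), apply the Gromov compactness result for broken maps to a sequence in a one-dimensional stratum $\tM_{\deform,\tGam}(\ul\Pe,\ul\eta,\ul x)$; the compactness argument adapts verbatim, because the deformed matching is a closed condition continuous in the moduli parameters (using the (Marking independence) factorization \eqref{eq:taufactor} to pass the deformation to the limit based graph). The limit lies in a stratum of type $\tGam' \xrightarrow{\kappa} \tGam$ for some edge collapse $\kappa$; possibilities are (i) formation of a sphere bubble, (ii) nodal degeneration of a boundary edge, or (iii) formation of additional tropical nodes. Case (i) is ruled out in codimension one by the Cieliebak-Mohnke stabilizing divisor as in the broken case. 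Case (iii) is excluded by the same tropical-symmetry count used in \cite{vw:trop}: if $\tGam'$ has strictly more tropical edges than the rigid $\tGam$, then $T_\trop(\tGam')$ has strictly larger dimension and acts freely on $\tM_{\deform,\tGam'}$, so the reduced stratum has negative expected dimension. (Whether a refined edge is split or not is irrelevant, since deforming the diagonal by $e^{i\eta_e}$ does not change its codimension.) This leaves case (ii), where the remaining fake/true boundary dichotomy in T-\ref{T-prop:truebdry} (a length-zero boundary edge is fake; a broken boundary edge is true) applies word for word. Part (\ref{part:pdtrans3}) follows from Gromov compactness and the usual finiteness of combinatorial types with bounded area; neither step is affected by the deformation.

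The main potential obstacle is ensuring that the \emph{coherence} of the deformation datum $\ul\eta$ under edge collapses is compatible with Gromov limits, so that limit configurations acquire a well-defined deformation at each split edge of $\tGam'$. This is guaranteed by the (Marking independence) axiom \eqref{eq:taufactor} together with the fact that the morphism types in Definition \ref{def:coh-deform} preserve the base tropical graph except under (Cutting edges); in particular, for a sequence converging within a single base type, the deformations at surviving split edges vary continuously and the deformations at newly created split edges are prescribed by the coherence. Once this compatibility is checked, the remaining transversality and compactness arguments go through by direct analogy with the broken case.
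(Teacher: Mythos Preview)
Your proposal is correct and follows the same route as the paper, which simply asserts that the proofs are analogous to those of Theorem T-\ref{T-thm:transversality} and Proposition T-\ref{T-prop:truebdry}; you have supplied more detail than the paper itself. One small correction: your final paragraph worries about ``newly created split edges'' in a limit configuration, but this cannot happen---split edges are by definition the edges of the fixed base tropical graph $\bGam$, and convergence of deformed maps preserves the base graph, so the set $\Edge_s(\bGam)$ and the deformation parameters on it are unchanged in the limit; the additional edges in $\tGam'$ carry the ordinary (undeformed) matching condition.
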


We define a deformed version of the Fukaya algebra by counting
deformed broken maps:

\begin{definition} {\rm(Deformed Fukaya algebra)} 
  Let $\ul \eta$ be a coherent deformation datum, and let
  $\ul \Pe=(\Pe_\tGam)_\tGam$ be a coherent regular perturbation datum
  for all types $\tGam$ of based treed disks whose base tropical graph
  $\Gamma$ is rigid.  The \em{deformed Fukaya algebra} is the graded
  vector space
  \[CF_{\deform}(\XX,L,\ul \eta):=CF^{\on{geom}}(\XX,L) \oplus \Lam
    x^{\greyt}[1] \oplus \Lam x^{\whitet}\]
  %
  equipped with composition maps
  \begin{equation}\label{eq:pdmun}
    m^{d_\white}_\deform(x_1,\dots,x_{d_\white})= \sum_{x_0,u \in {\tM_{\deform,\tGam}(\ul \Pe,\ul \eta, \ul{x})_0}} w_s(u) x_0,
  \end{equation}
  where the type $\tGam$ of $u$ ranges over all rigid types with
  $d_\white$ inputs (see Definition T-\ref{T-def:basemap}
  \eqref{T-part:basemap3} for rigidity), and
  \begin{equation}
    \label{eq:wtwsudef}
    w_s(u):=(-1)^{\heartsuit}(d_\black(\Gam)!)^{-1} (s(\tGam)!)^{-1} \Hol([\partial u]) \eps(u)
    q^{A(u)},
  \end{equation}
  where $s(\tGam)$ is the number of split edges in the type $\tGam$,
and the other symbols are as in \eqref{eq:wtwsubr}. 
\end{definition}

As in the case of broken maps, for a one-dimensional component of the
moduli space, the configurations with a boundary edge of length zero
constitute a fake boundary, whereas those with a broken boundary edge
constitute the true boundary of the moduli space.  The
$A_\infty$-axioms for the Fukaya algebra $CF_{\deform}(\XX,L,\ul \eta)$
follow from counts of the true boundary points of one-dimensional
components of the moduli space of deformed maps, and we obtain the
following result.

\begin{proposition}
  The composition maps in \eqref{eq:pdmun} satisfy the \ainfty axioms,
  and the Fukaya algebra $CF_{\deform}(\XX,L,\ul \eta)$ is strictly
  unital.
\end{proposition}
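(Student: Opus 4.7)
The plan is to mimic the proof of the $A_\infty$ axioms for the broken Fukaya algebra $CF_{\br}(\XX,L)$, using Proposition~\ref{prop:pdtrans} in place of its broken counterpart, with additional attention to the base tropical data and to the combinatorial factor $(s(\tGam)!)^{-1}$ in \eqref{eq:wtwsudef}. First, for each tuple $\ul x$ and each uncrowded rigid type $\tGam$ with $i(\tGam,\ul x)=1$, part~\eqref{part:pdtrans2} of Proposition~\ref{prop:pdtrans} identifies the true boundary of the compactified one-dimensional component of $\tM_{\deform,\tGam}(\ul\Pe,\ul\eta,\ul x)$ as the union of zero-dimensional strata whose treed disks have a broken boundary edge. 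Fake boundaries from length-zero boundary edges cancel in pairs between adjacent rigid strata, as in Section~T-\ref{T-sec:tubular}. Bubbling of additional tropical nodes does not contribute in codimension one, because $\eta$-deforming the matching condition at a split edge does not alter its codimension in the target, so the tropical symmetry group still jumps by at least two dimensions, as in the dimension-counting argument preceding Theorem~\ref{thm:bfuk}.

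Next, I would show that each true boundary configuration decomposes canonically into a pair $(u',u'')$ of rigid deformed maps of types $(\tGam'\to\bGam')$ and $(\tGam''\to\bGam'')$ obtained by cutting the broken boundary edge. Since (Cutting edges) is one of the based-curve morphisms for which the coherent data $\ul\Pe$ and $\ul\eta$ of Definition~\ref{def:coh-deform} satisfy compatibility, the restricted perturbation and deformation data on the pieces equal $(\Pe_{\tGam'},\eta_{\tGam',\bGam'})$ and $(\Pe_{\tGam''},\eta_{\tGam'',\bGam''})$; conversely, standard gluing at a broken gradient trajectory produces a unique one-parameter family limiting to any such matched pair. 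The weight $w_s$ factorizes: areas add so $q^{A}$ multiplies, holonomies multiply, signs combine by the usual Floer computation to give the $A_\infty$ sign, and the combinatorial prefactors split via $s(\tGam)=s(\tGam')+s(\tGam'')$ and $d_\black(\tGam)=d_\black(\tGam')+d_\black(\tGam'')$, so that summing over labellings of interior markings and split edges recovers the product of $m^m_\deform$ composition coefficients. Summing over all broken boundary edges and all rigid $\tGam$ with $i(\tGam,\ul x)=1$ then yields the $A_\infty$ relation applied to $\ul x$.

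Strict unitality follows from the homotopy unit construction of \cite[Section 10.3]{vw:trop}, just as for $CF_\br$: the perturbation datum admits forgetful morphisms, so the only deformed holomorphic maps with a $x^{\whitet}$ input are the constant strips accounted for by these morphisms, yielding $m_\deform^2(x^{\whitet},x)=(-1)^{|x|}m_\deform^2(x,x^{\whitet})=x$ and $m_\deform^n(\dots,x^{\whitet},\dots)=0$ for $n\neq 2$. The marking-independence axiom~\eqref{eq:taufactor} guarantees that the deformation datum is compatible with forgetting a unit input, since forgetting it does not change the lengths of the remaining boundary treed segments. The main obstacle I anticipate is the coherent bookkeeping of how the base tropical graph and its split edges distribute across the two pieces at a cut boundary edge: one must verify that the induced base tropical graphs agree with those prescribed by (Cutting edges) so that the factor $(s(\tGam)!)^{-1}$ recombines correctly across the double sum, and that the data inherited by each piece is indeed of the form $\eta_{\tGam^\bullet,\bGam^\bullet}$ for the coherent datum $\ul\eta$. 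Once this combinatorial coherence is settled, the rest reduces to the standard $A_\infty$ boundary identification.
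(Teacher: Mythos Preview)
Your proposal is correct and takes essentially the same approach as the paper. The paper's own justification is extremely terse—just the paragraph preceding the Proposition, which says the $A_\infty$ axioms follow from counting true boundary points (broken boundary edges) of one-dimensional moduli spaces, exactly as in the broken case—whereas you spell out the mechanism in detail, including the role of Proposition~\ref{prop:pdtrans}, the fake/true boundary dichotomy, the factorization of weights under cutting, and the homotopy-unit argument for strict unitality; your flagged bookkeeping concern about how the base tropical graph and the factor $(s(\tGam)!)^{-1}$ behave under (Cutting edges) is a legitimate point the paper leaves implicit.
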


The Fukaya algebras are independent of deformation parameters
$\ul \eta$ up to homotopy equivalence. The proof is by constructing an
\ainfty morphism by counts of a version of quilted disks which we now
define, which is a variation of Definition T-\ref{T-def:pert-morph} for the unbroken case
and Definition T-10.34 for the broken case. We recall that
given regular perturbation data $\ul \Pe^0$, $\ul \Pe^1$, 
a perturbation morphism $\ul \Pe^{01}$ extending them consists of domain-dependent almost complex structures and Morse functions defined on the moduli space of treed quilted disks, which restrict to
$\ul \Pe^0$ and $\ul \Pe^1$ on light and dark unquilted components.
As in the unquilted case, $\ul \Pe^{01}$ does not depend on the tropical structures. 
Quilted treed disks are also equipped with a ``distance to the quilting circle'' function
\begin{equation}
  \label{eq:dist-seam}
  d_\Gamma : \U_\Gamma \to [-\infty,\infty]
\end{equation}
 that is
equal to $0$ on quilted components,
and on any surface component $S$ has a constant value equal to the sum of (signed) lengths of treed segments lying between $S$ and the nearest quilted component. This function is composed with a fixed diffeomorphism $\delta : [-\infty,\infty] \to [0,1]$,
and on any surface component the deformation parameter is fixed to be $\eta^{\delta \circ d_\Gamma}$ where $\{\eta^t\}_{t \in [0,1]}$ is a path of deformation data for curves of type $\Gamma$. The domain of a deformed quilted map is a 
\em{quilted based curve}, which is analogous to a based curve (Definition \ref{def:basedcurve}) with ``treed disk'' replaced by ``quilted treed disk''. 

\begin{definition} {\rm(Deformed quilted holomorphic maps)}
  Let $\ul \eta^0$, $\ul \eta^1$ be deformation data, and let
  $\{\ul \eta^t\}_{t \in [0,1]}$ be a path of deformation data
  connecting $\ul \eta^0$ and $\ul \eta^1$.  Let $\ul \Pe^k$ be a
  regular perturbation datum for $\ul \eta^k$, $k=0,1$, and
  $\ul \Pe^{01}$ be a perturbation morphism extending $\ul \Pe^0$ and
  $\ul \Pe^1$ as described in the preceding paragraph.
  A \em{deformed holomorphic quilted map} $u: C \to \XX$ is the same
  as a deformed holomorphic map (Definition \ref{def:admap}) with the
  difference that the domain $C$ is a quilted based treed
  disk. Assuming that the type of $C$ is $(\tGam, \Gamma)$, $u$ is
  $\Pe^{01}_\tGam$-holomorphic and the deformation parameter at the
  split edge $e \in \Edge_s(\Gam)$ is
  $\eta^{\delta(d_\tGam(w_e))}_{\tGam}(C,e)$.  Here, we note that
  $w_e$ is a point in the universal curve $\ol \U_\tGam$,
  $\delta(d_\tGam(w_e)) \in [0,1]$ is a function of the distance
  \eqref{eq:dist-seam} from the quilting circle.
\end{definition}

Homotopies between \ainfty morphisms are defined by counts of twice-quilted disks. Deformed twice-quilted disks are defined in the same way as broken or unbroken twice-quilted disks.
The deformation parameter is taken to be a function of a version
of the  ``distance from the quilting circle'' parameter from \cite[Definition
5.9]{cw:flips}. This parameter is a function of lengths of treed segments, and remains unchanged if nodes and markings move within a surface component. 

The following Proposition is analogous to Proposition
T-\ref{T-prop:samedegree}, and the proof of that Theorem carries over.
\begin{proposition}
  \label{prop:pd3} Given a path of deformation data
  $\{\eta^t\}_{t \in [0,1]}$, and regular perturbation data
  $\ul \Pe^0$, $\ul \Pe^1$ for the end-points, there is a co-meager set
  of regular perturbation morphisms extending $\ul \Pe^0$,
  $\ul \Pe^1$. Any such perturbation morphism induces a convergent
  unital \ainfty morphism
  \[ \phi=(\phi^r)_{r \ge 0} : CF_{\deform}(\XX,L,\ul \Pe^0,\ul \eta^0)
    \to CF_{\deform}(\XX,L,\ul \Pe^1,\ul \eta^1)\]
  defined by counts of quilted deformed disks. The \ainfty morphism
  $\phi$ is a \ainfty homotopy equivalence and has zero-th composition
  map $\phi^0(1)$ with positive $q$-valuation, that is
  $\phi^0(1) \in \Lam_{>0}\bran{\hat \cI(L)}$.
\end{proposition}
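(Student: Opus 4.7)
The plan is to follow the pattern of Proposition T-\ref{T-prop:samedegree}, with the added feature of a deformation parameter that varies across the quilted domain. First I would set up the moduli space of deformed quilted holomorphic maps of type $\tGam$ with inputs $\ul x$: the domain is a quilted based treed disk, the map is $\Pe^{01}_\tGam$-holomorphic, and the deformation parameter at each split edge $e$ is $\eta^{\delta(d_\tGam(w_e))}_\tGam(C,e)$, so that the deformation interpolates from $\ul \eta^0$ on the ``$0$-side'' to $\ul \eta^1$ on the ``$1$-side'' across the quilting circle. Since $\delta \circ d_\tGam$ and $\eta^t$ vary continuously over the moduli of quilted based treed curves, the deformed quilted Cauchy-Riemann equations fit into the standard Banach setup, and compactness of sequences of such maps follows from the compactness results for broken/deformed maps via Gromov convergence exactly as in Section \ref{subsec:conv}, with the quilting marker tracked as an extra piece of domain data.

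Next, I would establish transversality for the universal moduli space exactly as in Proposition \ref{prop:pdtrans}\eqref{part:pdtrans1}: the deformation parameter depends only on the domain and not on the map, so it can be absorbed into the edge matching condition without altering the linearization of the Cauchy-Riemann operator on any surface component. Sard-Smale then yields a comeager set of regular perturbation morphisms $\ul \Pe^{01}$ extending $\ul \Pe^0$ and $\ul \Pe^1$, for which the moduli spaces of uncrowded rigid types are manifolds of the expected dimension. I would then define
\[
\phi^r(x_1, \ldots, x_r) = \sum_{x_0, u} w_s(u)\, x_0,
\]
where $u$ ranges over rigid zero-dimensional quilted deformed maps with inputs $x_1,\ldots,x_r$ and output $x_0$, and $w_s(u)$ is the natural analogue of \eqref{eq:wtwsudef}. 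Strict unitality and convergence follow from forgetful-map and Gromov-compactness arguments as in the unquilted case.

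The $A_\infty$ morphism relations come from a boundary analysis of the one-dimensional components, parallel to Proposition \ref{prop:pdtrans}\eqref{part:pdtrans2}. The true boundary consists of configurations with either (i) a broken boundary edge on the ``$0$-side'' of the quilting circle, contributing $\phi \circ m_{\deform,0}^k$-type terms, or (ii) a broken boundary edge on the ``$1$-side'', contributing $m_{\deform,1}^k \circ (\phi \otimes \cdots \otimes \phi)$-type terms; as usual, zero-length boundary edges give rise to fake boundary strata that cancel in pairs. Cancellation of signed counts of true boundary points yields the $A_\infty$ morphism equation. The bound $\phi^0(1) \in \Lam_{>0}\bran{\hat \cI(L)}$ holds because any contributing quilted disk with no inputs must carry at least one nonconstant disk or sphere component: a fully constant configuration cannot simultaneously satisfy the deformed matching conditions on both sides of the quilting circle for generic $\ul \eta^0, \ul \eta^1$, and the strict unit is excluded from the output by degree counting.

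Finally, homotopy equivalence is obtained by the standard twice-quilted-disk argument: for a second path of deformation data or a second extension $\ul{\tilde \Pe}^{01}$, one counts twice-quilted deformed disks, with the deformation parameter now depending on both quilting-distance functions, to construct an $A_\infty$ homotopy. Specializing to a constant path $\ul \eta^t \equiv \ul \eta^0$ and $\ul \Pe^{01}$ the constant extension shows $\phi$ is homotopic to the identity modulo terms of positive $q$-valuation, so $\phi$ induces an isomorphism on reduced cohomology and is therefore an $A_\infty$ homotopy equivalence. The main obstacle I anticipate is controlling the stratification of the compactified quilted moduli space when the tropical graph degenerates, since new breakings can enlarge the set of split edges and shift the domain of the coherent datum $\ul \eta^{01}$; verifying that the factoring axiom \eqref{eq:taufactor} and the compatibility of $\ul \Pe^{01}$ under (Cutting edges), (Collapsing edges), and (Making an edge finite) propagate correctly through the quilted compactification is where the work concentrates.
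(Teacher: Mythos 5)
Your proposal is correct and follows essentially the same route as the paper, which simply notes that the proof of the analogous statement for broken maps (Proposition T-samedegree) carries over once the quilted deformed moduli spaces are set up with the deformation parameter $\eta^{\delta(d_\tGam(w_e))}_\tGam$ interpolating across the quilting circle, and with twice-quilted disks supplying the homotopies. The only cosmetic difference is your justification of $\phi^0(1)\in\Lam_{>0}\bran{\hat\cI(L)}$: the paper's mechanism is the index argument (an unquilted limit of constant quilted configurations would have index $-1$, contradicting regularity) rather than a failure of the deformed matching condition for constant maps, but this does not affect the validity of your argument.
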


\section{Split tropical graphs }
\label{sec:splitgr}
We introduce the idea of split maps in Sections \ref{subsec:idea} and
\ref{subsec:fs}, and then go on to define split tropical graphs, which are the
combinatorial objects underlying split maps.
\subsection{The idea of a split map}
\label{subsec:idea}
Split maps arise as limits of deformed maps as the deformation
parameters go to infinity. A split map is a version of a broken map
where there is no matching condition along split edges.  The tropical
properties of deformed maps, such as the tropical graph and the
symmetry group, look exactly like that of broken maps; this is
explained by the fact that deformation of an edge matching condition
is a small scale phenomenon that is not seen by the tropical graph
which only detects large scale behavior. As the deformation parameters
go to infinity, split maps, the limit objects, have a new kind of
tropical graph called \em{split tropical graph}, which is introduced
in this section. Split tropical graphs do not satisfy the 
\hyperref[eq:bslope]{(Direction)} condition 
at split edges; instead they satisfy a \em{cone condition}
which ensures that the tropical symmetry group is as large as
possible. In particular, in the rigid case, the dimension of the
tropical symmetry group is equal to the codimension of the matching
conditions at split edges.

We work through an example to motivate the formal definitions.  Let
$X=F_1$ be the Hirzebruch surface and let $L \subset X$ be a toric
Lagrangian. For a fixed point $p$ as in Figure \ref{fig:h2split},
there is exactly a single disk in the class
$\delta_{D_2} + \delta_E \in \pi_2(X,L)$ that passes through
$p$. Under a multiple cut, there is a single broken disk $u_\br$ in
this class.  We denote by $\Gamma$ the tropical graph of $u_\br$.
We fix a direction 
\begin{equation}
  \label{eq:etasp1}
  \etasp=\pi^\perp_{(1,1)}(1,-1) \in \t/\bran{(1,1)}
\end{equation}
called the \em{cone direction}, and deform the matching condition at the edge $e$ in the direction
$\etasp$. That is, we consider 
a family $\{u_\tau\}_\tau$  of deformed maps with deformation parameters $\tau \etasp$. 
In the deformed family,  $u_0$ is the broken map $u_\br$, and 
$u_\tau=(u_{\tau,v_0}, u_{v_2})$,  with the map $u_{v_2}$ staying the same for all $\tau$.  In
the limit $\tau \to \infty$, the intersection point of $u_{v_0}$ with
the divisor $X_{P_{03}}$ and the corner $X_{P_\cap}$ merge, and there
is a bubble in the neck piece $\XX_{P_\cap}$.  In the proof of
Proposition \ref{prop:defconv}, we show that the tropical graph of the
limit $u_\infty$ of the deformed maps $u_\tau$, called a \em{split
  tropical graph} and denoted by $\tGam$, satisfies a \em{cone
  condition}, namely that
\begin{equation}
  \label{eq:cone-eg}
  \Diff_e(\fw(\tGam,\Gamma)) \ni \etasp.   
\end{equation}
Here, $\fw(\tGam, \Gamma)$ is the cone generated by the vertex
positions $\cT=(\cT(v))_v$ of the tropical graph $\tGam$ relative to
positions in $\Gamma$, that is,
\[\fw(\tGam,\Gamma):=\R_{\geq 0}\{\cT=(\cT_{\tGam}(v) -
  \cT_\Gamma(\kappa(v)))_{v \in \Ver(\tGam)}: \cT_\tGam \in \W(\tGam),
  \cT_\Gamma \in \W(\Gamma)\};\]
and for any element $\cT \in \fw(\tGam, \Gamma)$,
\[\Diff_e(\cT):=\pi_{\cT(e)}^\perp(\cT(v_0) - \cT(v_2)).\]
is the amount by which $\cT$ fails to satisfy the direction condition
at the edge $e=(v_0,v_2)$.  Note that if $e$ is not a split edge,
$\Diff_e(\cT)=0$.  The set $\Diff_e(\fw(\tGam,\Gamma))$ is a cone in a
vector space, and it is called the \em{discrepancy cone}.  In the
current example $\dim \t=2$, and we have
\[\fw(\tGam, \Gamma) \simeq \R_{\geq 0} \{\cT_{\tGam}(v_0'') - \cT_\Gamma(v_0)\} = \R_{\geq 0}(2,-1)\]
is one-dimensional, the discrepancy cone is $[0,\infty)$, and therefore,
$\dim(T_\trop(\tGam))=2$. Consequently, in this example,
\begin{equation}
  \label{eq:dimeq}
  \dim(T_\trop(\tGam))=\codim(\text{matching condition at $e$}). 
\end{equation}

\begin{figure}[h]
  \centering\scalebox{.65}{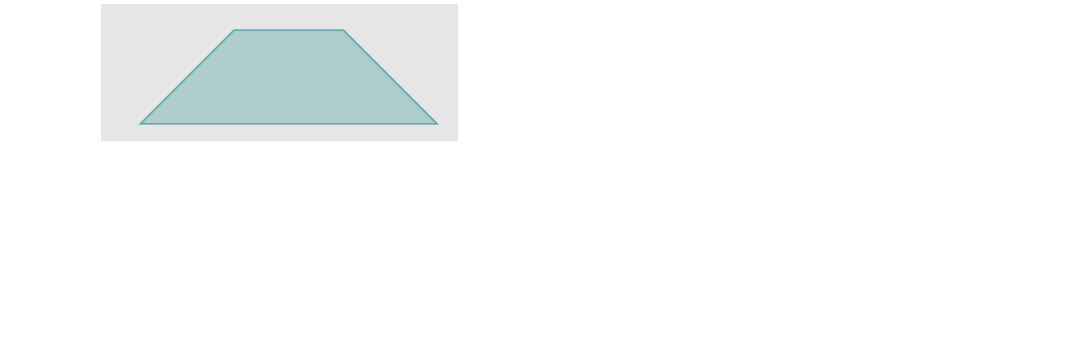}
  \caption{Split disk corresponding to a disk in the Hirzebruch
    surface $X=F_1$ of homology class $\delta_{D_2} +\delta_E$ passing
    through a fixed point $p$.}
  \label{fig:h2split}
\end{figure}

Heuristically, our result is that the moduli space of broken maps is
cobordant to the tropical symmetry orbit space of split maps; that is, for a rigid type
$\Gamma$ of broken maps, there is a cobordism
\begin{equation}
  \label{eq:cobord1}
  \M_\Gamma^\br(\XX, L) \sim \bigsqcup_\tGam(\M_\tGam^\spl(\XX,L,\etasp)/T_\trop(\tGam)) \cdot \mult(\tGam),   
\end{equation}
where $\tGam$ ranges over all split tropical graphs with base
$\Gamma$, and whose discrepancy cone contains the cone direction
$\etasp$, and the multiplicity $\mult(\tGam) \in \Z$ arises out of the
tropical symmetry group.

We emphasize that the moduli space of split maps depends on the cone
direction.  In the above example, there are two possible cone
directions, since $\t/\bran{(1,1)} \simeq \R$.  If instead of $\etasp$
in \eqref{eq:etasp1}, we take the cone direction to be
$\etasp'=\pi^\perp_{(1,1)}(-1,1)$, there are no split maps in the
homology class of $u_\br$ that satisfy the given point constraint.

More generally, when $\dim(\t) \geq 3$, we require the cone direction
$\etasp$ to be generic.  For a generic element $\etasp$, the cone
condition \eqref{eq:cone-eg} implies the dimension condition
\eqref{eq:dimeq}. Indeed, the discrepancy cone is generated by
rational elements, and such a cone contains a generic element only if
it is top-dimensional.

When there are multiple split edges, a split map is obtained as the
limit of a sequence of deformed maps whose deformation parameters
satisfy an increasing condition: Assuming the split edges are ordered
as $e_1,\dots,e_n$, the sequence of deformation parameters is taken to
be
\[\ul \eta_\nu=(c_\nu(i) \pi^\perp_{\cT(e_i)}(\etasp))_i \]
where $\etasp \in \t$ is a generic element and the sequence of tuples
$\ul c_\nu=(c_\nu(1),\dots,c_\nu(n)) \in (\R_+)^n$ is an increasing
sequence in the sense that
\[\lim_\nu\tfrac {c_\nu(j)}{c_\nu(i)}=0 \quad \text{if } i<j.\]
In the proof of Proposition \ref{prop:defconv}, we show that for a
convergent sequence of $\ul \eta_\nu$-deformed maps, the discrepancy
cone $\Disc(\tGam)$ of the tropical graph $\tGam$ of the limit split
map contains $\ul \eta_\nu$ for all $\nu$. Therefore, by Lemma
\ref{lem:inc-gen-cone}, the discrepancy cone is top-dimensional, and
consequently, the dimension of the tropical symmetry group satisfies
\eqref{eq:dimeq} and is the maximum possible.

\subsection{Splitting the diagonal}
\label{subsec:fs}
We point out relations to results in the literature that involve
splitting the diagonal, for which we first
the main result Theorem \ref{thm:tfuk} informally.
 For simplicity, we consider broken maps of type $\Gamma$
with a single split edge $e$. Any such broken map $u$ may be viewed as
a pair of maps $(u_+,u_-)$ with a matching condition at the node
$w_e$, namely $u_+(w_e^+)=u_-(w_e^-)$, or in other words,
\[(u_+(w_e^+),u_-(w_e^-)) \in \Delta \subset X_e^2 \quad \text{where }
  X_e=X_{P(e)}/T_{\cT(e),\C} \simeq T_\C/T_{\cT(e),\C}.\]
Deforming the matching condition at the
node $w_e$ produces a split map, which is a pair
$u_\infty=(u_\infty^+,u_\infty^-)$ with no matching condition at the
node $w_e$, and the set of vertex positions on either side is a cone
$\mC_\pm$ such that $\mC_+ \times \mC_- \subset \t/t_{\cT(e)}$ is
top-dimensional. The cone $\mC_+ \times \mC_-$ generates the tropical symmetry group
$T_\trop(\tGam)$ for $u_\infty$.  There is a homotopy equivalence of moduli spaces
\begin{equation}
  \label{eq:br2spl}
  \M^\br_\Gamma(L) \sim \bigsqcup_\tGam \mult(\tGam) \M^\spl_\tGam(L,\etasp)/T_\trop(\tGam), 
\end{equation}
where $\tGam$ ranges over split tropical graphs whose base graph is
$\Gamma$, and the multiplicity $\mult(\tGam) \in \Z$ arises from the
graph $\tGam$. The moduli space of split maps is a product
\begin{equation}
  \label{eq:splproduct}
  \M^\spl_\tGam(L,\etasp)/T_\trop(\tGam)= (\M_{\tGam^+}/T_\trop(\tGam^+)) \times (\M_{\tGam^-}/T_\trop(\tGam^-))
\end{equation}
where both $\tGam^+$, $\tGam^-$ contain an end of the split edge $e$.

The splitting technique described in the previous paragraph is
reminiscent of the Fulton-Sturmfels formula \cite{fulton} for the splitting of a
diagonal in a toric variety. First, observe that in a toric variety
$X$ with moment map $\Phi : X \to \t^\dual$, if there is an element
$\xi \in \t$, for which the component $\Phi_\xi:=\bran{\Phi,\xi}$ of
the moment map is Morse, then, the Morse deformation gives a splitting
of the diagonal
\[H_*(X \times X) \ni [\Delta] \sim \cup_{p \in
    \crit(\Phi_\xi)}[W^u(p)] \times [W^s(p)] . \]
Here $p$ is a torus fixed point and the stable and unstable manifolds
$W^s(p)$ and $W^u(p)$ are torus-invariant submanifolds that intersect
transversely at $p$.  If $\Phi_\xi$ is not Morse, a splitting is given
by the Fulton-Sturmfels formula
\begin{equation}
  \label{eq:fs-split}
  [\Delta] \sim \sum n_{Q_+,Q_-} [X_{Q_+}] \times [X_{Q_-}]  
\end{equation}
where each pair $(X_{Q_+},X_{Q_-})$ consists of transversely
intersecting torus-invariant submanifolds, and the multiplicity
$n_{Q_+,Q_-} \in \Z$ is determined by the fan of $X$. The product of
the cones $C_{Q_+}$, $C_{Q_-}$ corresponding to $X_{Q_+}$, $X_{Q_-}$
span $\t$.  Whereas in the Fulton-Sturmfels splitting in
\eqref{eq:fs-split} there is a finite set of possible cones
$C_{Q_\pm}$, in our splitting in \eqref{eq:br2spl},
\eqref{eq:splproduct}, $\mC_+$, $\mC_-$ may be any rational cone in
$\t$.

Our techniques also resemble the Morse deformation in Charest-Woodward
\cite{cw:flips} and Bourgeois \cite{bo:com}, where the matching
condition for broken maps in a single cut is deformed to a split
form. We describe the construction in \cite{cw:flips}: Consider a
single cut with cut spaces $X_{P_+}$, $X_{P_-}$ and relative divisor
$X_Q \subset X_{P_\pm}$.  The matching condition at a node $w$ is
given by
\[u_+(w_+)=u_-(w_-) \in X_Q. \]
Let
\[ H : X_Q \to \R, \quad \phi_t^H:X_Q \to X_Q \]
be a Morse function, whose time $t$ gradient flow is given by
$\phi_t^H$.  Given a deformation parameter $t \in (0,\infty)$, the
node matching condition for a $t$-deformed map is
\[u_+(w_+)=\phi_t^H u_-(w_-).\]
As $t \to \infty$ the Morse trajectory at any node degenerates to a
broken Morse trajectory. The matching condition degenerates to the
following condition: The lifts of the node lie on two transversely
intersecting Morse cycles, namely the stable and unstable submanifold
of some critical point of the Morse function.  The Morse deformation
is thus based on a deformation of the diagonal
$\Delta_{X_Q} \subset X_Q \times X_Q$ to a split form, which is a
union of products
\begin{equation}
  \label{eq:morse-degen}
  \bigcup_{p \in \crit(H)} W^u(p) \times W^s(p) \subset X_Q \times X_Q.
\end{equation}
\subsection{Preliminaries on cones and genericity}
We define a notion of genericity and a notion of an increasing
sequence in a vector space.  These two properties are then put
together to describe a sufficient condition for cones in a vector
space to be top-dimensional, which is used in Section \ref{sec:stg} to
describe the cone condition for split tropical graphs.

\begin{definition}
  Let $V \simeq \R^{n}$ be a vector space equipped with a dense
  rational lattice $V_\Q \simeq \Q^n$.
  \begin{enumerate}
  \item {\rm(Rational subspace)} A \em{rational subspace} of $V$ is a
    linear subspace $W \subset V$ in which $W \cap V_\Q$ is dense.
  \item {\rm(Generic vector)} A vector $\eta \in V$ is \em{generic} if
    it is not contained in any proper rational subspace of $V$.
  \item {\rm(Rational linear map)} Suppose $W$ is a real vector space
    with a dense rational lattice $W_\Q$. A linear map $f:V \to W$ is
    \em{rational} if $f(V_\Q) \subseteq W_\Q$. The subspaces $\ker(f)$
    and $\on{im}(f)$ are rational subspaces.
  \end{enumerate}
\end{definition}

\begin{remark}\label{rem:quo2sub}
  The notion of genericity is applicable to the Lie algebra $\t$ of
  the torus $T$ since it has an integral lattice $\t_\Z \subset \t$
  and consequently a rational lattice.  For a direction
  $\cT(e) \in \t_\Z$ of a tropical edge, the projection
  \[\pi_{\cT(e)}^\perp: \t \to \t/\t_{\cT(e)} \]
  projects the rational lattice $\t_\Q \subset \t$ to a dense subset
  $(\t/\t_{\cT(e)})_\Q \subset \t/\t_{\cT(e)} $, which happens to be
  the rational lattice obtained by viewing the quotient
  $\t/\t_{\cT(e)} $ as the Lie algebra of the torus $T/T_{\cT(e)}$.
  Thus the projection $\pi_{\cT(e)}^\perp$ maps generic elements of
  $\t$ to generic elements in the quotient.
\end{remark}

\begin{definition}\label{def:ratcone}
  {\rm(Rational cone)} Let $V$ be a vector space with a dense rational
  subspace $V_\Q \subset V$.  A  subset $C \subset V$ is a
  \em{cone} if it is convex and for any $v \in C$, $\alpha v \in C$ for all
  $\alpha \geq 0$.  A cone $C \subset V$ is \em{rational} if $C$ is
  contained in a rational subspace $W \subset V$, and $C$ contains an
  open neighborhood of $W$, or in other words, $\dim(C)=\dim(W)$. 
\end{definition}

\begin{lemma}
  Suppose $C \subset V$ is a rational cone containing a
  generic vector. Then $C$ is top-dimensional.
\end{lemma}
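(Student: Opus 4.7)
The plan is to unwind the definitions: rationality of the cone pins it inside a rational subspace of the correct dimension, and then genericity forces that subspace to be all of $V$.

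First I would invoke Definition \ref{def:ratcone} to produce a rational subspace $W \subset V$ with $C \subset W$ and $\dim(C) = \dim(W)$. The whole game then reduces to showing $W = V$. For this, let $\eta \in C$ be the generic vector guaranteed by hypothesis; since $C \subset W$, we have $\eta \in W$. By the definition of \emph{generic}, $\eta$ is not contained in any proper rational subspace of $V$, so $W$ cannot be a proper subspace. Hence $W = V$, which gives $\dim(C) = \dim(V)$, i.e.\ $C$ is top-dimensional.

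The argument is essentially a one-liner chaining three definitions, so there is no real obstacle; the only point requiring minor care is confirming that $W$ being rational is exactly the hypothesis used in the notion of generic vector (namely, ``proper rational subspace''), and that the equality $\dim(C) = \dim(W)$ in the definition of rational cone is what lets us transfer dimension information from $W$ to $C$. Observing Remark \ref{rem:quo2sub} shows that this lemma will be applied to $V = \t$ or to quotients $\t/\t_{\cT(e)}$, where the rational lattice is inherited from the integer lattice of the torus, so no additional genericity machinery is needed beyond the definitions already in place.
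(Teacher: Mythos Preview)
Your proof is correct and essentially identical to the paper's. The paper phrases it in terms of the $\R$-span $C_\pm := \R\langle C\rangle$, which is a rational subspace containing the generic vector and hence equals $V$; but this $C_\pm$ is exactly the $W$ you extract from Definition~\ref{def:ratcone}, so the two arguments coincide.
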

\begin{proof}
  The unsigned cone
  $C_\pm:=\R \bran C \subset V$
  defined as the $\R$-span of $C$ is a rational subspace of $V$ that
  contains a generic vector. Therefore $C_\pm=V$ and $C$ is
  top-dimensional.
\end{proof}
An \em{increasing cone} is a special kind of a top-dimensional cone in
$(\R_{\geq 0})^n$:
\begin{definition}{\rm(Increasing cone)}
  %
  A rational cone $C \subset (\R_{ \geq 0})^n$ is an
  \em{increasing cone} if for all $i=1,\dots,n$,
  $C \cap \{x_{i+1}=\dots=x_n=0\}$ is an $i$-dimensional cone.
\end{definition}
The increasing condition on cones can be equivalently stated in terms
of increasing sequences of tuples.
\begin{definition}{\rm(Increasing sequences of
    tuples)}\label{def:inc-tup}
  A sequence of tuples
  \[ \ul x_\nu=(x_{1,\nu},\dots,x_{n,\nu}) \in \R_+^n\]
  is \em{increasing} if $x_{i,\nu} \to \infty$ for all $i$ as
  $\nu \to \infty$, and
  \[\lim_\nu\tfrac {x_{i,\nu}} {x_{i-1,\nu}}=0 \]
  for $i=2,\dots,n$.
\end{definition}
\begin{lemma}\label{lem:inc-equiv}
  The following are equivalent for a cone
  $C \subset (\R_{\geq 0})^n$.
  \begin{enumerate}
  \item $C$ is an increasing cone.
  \item {\rm(Weak sequence)} \label{part:weakseq} There exists an
    increasing sequence of tuples in $C$.
  \item {\rm(Strong sequence)} \label{part:strongseq} The tail of any
    increasing sequence of tuples lies in $C$.
  \end{enumerate}
\end{lemma}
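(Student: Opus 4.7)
The strategy is to prove the cyclic implications $(1)\Rightarrow(3)\Rightarrow(2)\Rightarrow(1)$. The implication $(3)\Rightarrow(2)$ is immediate: applying (3) to the concrete increasing sequence $\ul x_\nu=(\nu^n,\nu^{n-1},\dots,\nu)$ places its tail in $C$, which is then an increasing sequence lying in $C$.

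For $(1)\Rightarrow(3)$, write $L_i:=\{x_{i+1}=\cdots=x_n=0\}$. Since each $C\cap L_i$ is an $i$-dimensional rational cone contained in $\R_{\geq 0}^i\times\{0\}^{n-i}$, it has nonempty interior in $L_i$, and one may choose $v_i\in C\cap L_i$ whose first $i$ coordinates are all strictly positive. The matrix $[v_1|\cdots|v_n]$ is then upper triangular with positive diagonal, so any tuple $\ul x\in\R^n$ has a unique expansion $\ul x=\sum_i a_i v_i$ obtained by back substitution. Given an increasing sequence $\ul x_\nu$, solving the triangular system from $i=n$ downward and using the dominance $x_{i+1,\nu}=o(x_{i,\nu})$ yields $a_{i,\nu}/x_{i,\nu}\to 1/v_{i,i}>0$; hence $a_{i,\nu}>0$ for $\nu$ large, so $\ul x_\nu$ is a positive combination of $v_1,\dots,v_n\in C$ and therefore lies in $C$.

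For $(2)\Rightarrow(1)$, induct on $n$. The base $n=1$ is clear, since a cone in $\R_{\geq 0}$ containing an unbounded positive sequence must be all of $\R_{\geq 0}$. For the inductive step, let $\ul x_\nu\in C$ be an increasing sequence. Closedness of $C$ together with $\ul x_\nu/x_{1,\nu}\to e_1$ gives $e_1\in C$. Let $\pi:\R^n\to\R^{n-1}$, $(x_1,\dots,x_n)\mapsto(x_2,\dots,x_n)$. The image $\pi(C)$ is a closed rational cone in $\R_{\geq 0}^{n-1}$ and contains the increasing sequence $\pi(\ul x_\nu)$, so by inductive hypothesis $\pi(C)\cap L_{i-1}^{(n-1)}$ is $(i-1)$-dimensional for each $i\geq 2$. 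Any preimage $c'\in C$ of a vector $w\in\pi(C)\cap L_{i-1}^{(n-1)}$ automatically has its coordinates $i+1,\dots,n$ vanishing, hence lies in $C\cap L_i$. Lifting $i-1$ linearly independent such $w$'s and adjoining $e_1$ produces $i$ linearly independent vectors in $C\cap L_i$, showing $\dim(C\cap L_i)=i$.

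The main obstacle is the inductive step of $(2)\Rightarrow(1)$, specifically verifying that $\pi(C)$ is still a closed rational cone so that induction applies. This relies on the tacit assumption that $C$ is rational polyhedral: finitely generated cones are preserved under quotient by a rational direction, giving both closedness of $\pi(C)$ and the extraction $e_1\in C$. This matches the paper's context, where the cones $\fw(\tGam,\Gamma)$ are generated by finitely many relative vertex positions; if $C$ were merely a convex cone spanning a rational subspace, one would instead need to argue with the closure $\overline C$ throughout and verify that all closure points used (in particular $e_1$) remain in $C$.
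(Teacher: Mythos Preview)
Your proof is correct (modulo the closedness/polyhedrality caveat you yourself raise in the final paragraph, which is indeed a genuine issue with the lemma as literally stated and which the paper does not address any more carefully than you do). The argument for $(1)\Rightarrow(3)$ via an upper-triangular basis $v_1,\dots,v_n$ and back-substitution is clean, and the lifting argument in $(2)\Rightarrow(1)$ is fine once one knows $e_1\in C$ and that $\pi(C)$ is again a cone of the required type.

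The paper's proof takes a genuinely different route: rather than proving cyclic implications, it shows that each of the three conditions admits the \emph{same} inductive characterization. Namely, condition (X) holds for $C\subset(\R_{\geq 0})^n$ if and only if $e_1\in C$ (or $\overline C$) and the normal cone
\[
N_{e_1}C:=\{(v_2,\dots,v_n)\in\R_{\geq 0}^{n-1}:\exists\,t_0>0\text{ such that }(1,tv_2,\dots,tv_n)\in C\text{ for all }0\leq t\leq t_0\}
\]
satisfies condition (X) in $(\R_{\geq 0})^{n-1}$. Since the base case is trivial and all three conditions satisfy the same recursion, they coincide. Note that the paper uses the \emph{normal cone} at $e_1$, not the linear projection $\pi$ you use; these are different objects in general (the normal cone records infinitesimal directions at $e_1$, while $\pi(C)$ records all of $C$ modulo the $x_1$-axis), so your inductive step is not a repackaging of theirs. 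Your approach has the advantage of being concrete --- the triangular basis gives an explicit certificate that any increasing sequence eventually enters $C$ --- whereas the paper's approach is more uniform, treating all three conditions symmetrically and making the equivalence structurally transparent.
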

\begin{proof}
  The increasing cone condition can be characterized inductively :
  $C \subset (\R^n_+)$ is an increasing cone if and only if
  $e_1:=(1,0,\dots,0) \in C$ and the \em{normal cone}
  \[N_{e_1}C:=\{(v_2,\dots,v_n) \in \R_+^{n-1} : \exists t_0 >0 :
    (1,tv_2,\dots,tv_n) \in C \quad
    \forall 0 \leq t \leq t_0\}\]
  is an increasing cone in $(\R_+)^{n-1}$.  Conditions
  \eqref{part:weakseq} and \eqref{part:strongseq} can also be
  characterized in the same way. That is, a cone $C \subset (\R^n_+)$
  satisfies the (Weak sequence) resp. (Strong sequence) condition if
  and only if $(1,0,\dots,0) \in \ol C$ and the normal cone $N_{e_1}C$
  satisfies the (Weak sequence) resp. (Strong sequence) condition.
  The proof of the lemma then follows by induction.
\end{proof}

The increasing property and genericity are combined to give the
following sufficient condition for a cone to be top-dimensional:
\begin{lemma}\label{lem:inc-gen-cone}
  Let $V=\sum_{i=1}^kV_i$ be a direct sum of vector spaces, each of
  which has a dense rational lattice $V_{i,\Q} \subset V_i$. For each
  $i$, let $\eta_i \in V_i$ be a generic element.  Suppose
  $C \subset V$ is a rational cone that contains a sequence
  \[\eta_\nu:=\sum_{i=1}^k c_{\nu,i}\eta_i\]
  where $c_\nu=(c_{\nu,i})_i \in \R_+^n$ is an increasing
  sequence. Then $C$ is top-dimensional in $V$.
\end{lemma}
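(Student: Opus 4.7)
I would argue by induction on $k$, leveraging the immediately preceding lemma that a rational cone containing a generic vector is top-dimensional.

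For the base case $k=1$, one has $\eta_\nu = c_{\nu,1}\eta_1 \in C$, so rescaling by $1/c_{\nu,1}$ (permitted since $C$ is a cone) yields $\eta_1 \in C$. Then $C$ is a rational cone in $V=V_1$ containing the generic vector $\eta_1$, hence top-dimensional by the preceding lemma.

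For the inductive step, let $W \subseteq V$ be the rational subspace of dimension $\dim C$ that contains $C$; the goal is to show $W = V$. First I would show $V_1 \subseteq W$. The vectors
\[ \eta_\nu/c_{\nu,1} \;=\; \eta_1 + \sum_{i \geq 2}(c_{\nu,i}/c_{\nu,1})\,\eta_i \]
lie in $C \subseteq W$, and the increasing condition $c_{\nu,i}/c_{\nu,1} \to 0$ for $i \geq 2$ lets one pass to the limit, using closedness of the subspace $W$, to obtain $\eta_1 \in \ol C \subseteq W$. Since $\eta_1$ is generic in $V_1$ and $W \cap V_1$ is a rational subspace of $V_1$ containing $\eta_1$, one concludes $W \cap V_1 = V_1$, i.e.\ $V_1 \subseteq W$.

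Next I would project via the rational quotient map $\pi : V \to V/V_1 \cong V_2 \oplus \cdots \oplus V_k$. The image $\pi(C)$ is a cone contained in the rational subspace $\pi(W)$. Because $\pi|_W : W \to \pi(W)$ is a surjective linear map between finite-dimensional spaces, it is open, so it sends the nonempty relative interior of $C$ inside $W$ to an open subset of $\pi(W)$; hence $\pi(C)$ is a rational cone with $\dim \pi(C) = \dim \pi(W)$. The sequence $\pi(\eta_\nu) = \sum_{i \geq 2} c_{\nu,i}\eta_i$ lies in $\pi(C)$, the $\eta_i$ ($i \geq 2$) remain generic in the $V_i$ (Remark \ref{rem:quo2sub} applies to the direct-sum decomposition), and the truncated tuples $(c_{\nu,2}, \ldots, c_{\nu,k})$ are still increasing in $\R_+^{k-1}$. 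The inductive hypothesis applied to $\pi(C)$ gives $\pi(W) = V/V_1$, which combined with $V_1 \subseteq W$ forces $W = V$.

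The delicate points are (i) verifying that the rational-cone property passes to the quotient with the expected dimension, which relies on openness of surjective linear maps in finite dimensions, and (ii) correctly matching the notions of ``generic'' and ``rational'' across the factors $V_i$, the ambient $V$, and the quotient $V/V_1$ so that the inductive hypothesis applies cleanly. Everything else is bookkeeping with the definitions of increasing sequence and rational cone.
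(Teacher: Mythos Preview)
Your proof is correct and follows essentially the same approach as the paper: both argue by induction on $k$, use the rescaled limit $\eta_\nu/c_{\nu,1} \to \eta_1$ to place $\eta_1$ in the span of $C$, invoke genericity of $\eta_1$ to conclude $V_1$ lies in that span, and then project to $\oplus_{i\ge 2} V_i$ and apply the inductive hypothesis. The paper's proof is terser (it works with the $\R$-span $C_\pm$ directly and simply asserts that the projection of $C$ is a rational cone), whereas you supply the extra detail on why the rational-cone property survives the projection; but the underlying argument is the same.
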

\begin{proof}
  We show $C$ is top-dimensional by proving that the $\R$-span of $C$,
  denoted by $C_\pm \subset V$ is, in fact, equal to $V$. First, we
  observe that $\eta_1 \in C_\pm$, since, by the increasing condition,
  $\eta_1$ is the limit of the sequence
  $\frac {\eta_\nu} {c_{\nu,1}} \in C$. Then, since $V_1 \cap C_\pm$
  is a rational subspace of $V_1$ and it contains the generic element
  $\eta_1$, we conclude $V_1=V_1 \cap C_\pm$. Therefore, $V_1$ is
  contained in $C_\pm$. The proof can now be finished by induction:
  The projection of $C$ to $\oplus_{i=2}^nV_i$ is a rational
   cone and it contains the sequence
  $\sum_{i=2}^k c_{\nu,i}\eta_i$.
\end{proof}

\subsection{Split tropical graphs}\label{sec:stg}
We start by describing a preliminary version called a ``quasi-split tropical
graph'' and then describe the genericity condition under which such a
graph is a split tropical graph. The main result of this section is that under this
genericity condition called the \em{cone condition}, there is a lower
bound on the dimension of the space of relative vertex
positions. Later, this would imply 
that the dimension of the tropical
symmetry group of split maps is at least the codimension of the
matching condition at split edges (see Remark \ref{rem:dimtrop}).

\begin{definition} \label{def:qsplit} {\rm(Quasi-split tropical
    graph)}
  A \em{quasi-split tropical graph}, denoted by $\tGam \xrightarrow{\kappa} \Gamma$ or $(\tGam, \Gamma)$, consists of
  \begin{enumerate}
  \item a tropical graph $\bGam$, called a \em{base tropical graph}
    with a collection of split edges
    $\Edge_s(\Gamma) \subseteq \Edge(\Gamma)$ as in Definition
    \ref{def:splitedge},
  \item a graph $\tGam$ with an edge collapse morphism
    $\kappa:\tGam \to \bGam$,
    %
  \item \label{part:qsplitorder} an ordering $\prec_\tGam$ on the
    split edges $\Edge_s(\bGam)$,
  \item and a tropical structure on each connected component of the
    graph $\tGam \bs \Edge_s(\bGam)$ so that the restricted map
    \[\kappa: \tGam \bs \Edge_s(\bGam) \to \bGam \bs
      \Edge_s(\bGam)\]
    is a tropical edge collapse.  Consequently for any vertex
    $v \in \Ver(\tGam)$, $P_{\tGam}(v) \subset P_{\bGam}(v)$.
  \end{enumerate}
\end{definition}
\begin{remark}
  A quasi-split tropical graph can alternately be defined as the
  combinatorial type $\kappa : \tGam \to \Gamma$ of a curve with base
  (see Definition T-\ref{T-def:basedcurve}) together with a tropical
  structure on the connected components of $\tGam \bs \Edge_s(\Gamma)$
  so that $\kappa$ is a tropical edge collapse on
  $\tGam \bs \Edge_s(\Gamma)$.
\end{remark}
\begin{figure}[h]
  \centering\scalebox{.6}{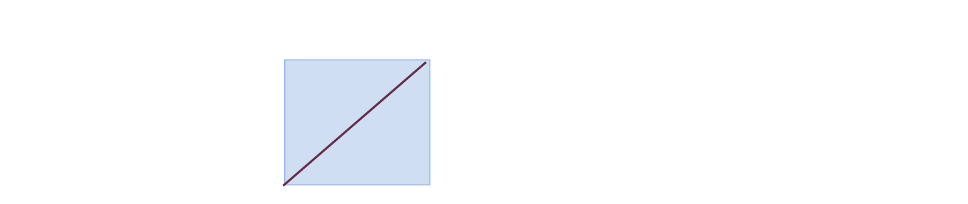}
  \caption{Quasi-split tropical graphs $(\tGam_1,\Gamma_1)$,
    $(\tGam_2,\Gamma_2)$. In both cases $e$ is the only split edge,
    and the direction condition is not satisfied for $e$ in
    $\tGam_i$.}
  \label{fig:w-egs}
\end{figure}

\begin{definition}\label{def:rvp}
  {\rm(Cone of relative vertex positions)} For a quasi-split tropical
  graph $\tGam \xrightarrow{\kappa} \bGam$, the \em{cone of relative
    vertex positions} is
  \begin{multline} \label{eq:fwdef}
    \fw(\tGam,\Gamma):=\R_{\geq 0}\{(\tilde \cT(v) - \cT(\kappa v) \in \Cone(\kappa,v) \subset \t_{P_{\tGam}(v)})_{v \in \Ver(\Gamma')} :\\
    \tilde \cT \in \W(\tGam), \cT \in \W(\Gamma)\},
  \end{multline}
  where
  $\Cone(\kappa,v):=\Cone_{P_\Gamma(\kappa(v))^\dual}P_\tGam(v)^\dual
  \subset \t_{P_{\tGam}(v)}$.  Note that
  \begin{equation*}
    \fw(\tGam,\Gamma)= \Cone_{\W(\Gamma)}\W(\tGam).    
  \end{equation*}
\end{definition}

The notation $\fw(\tGam,\Gamma)$ for the cone of relative vertex
positions is the same as the space of relative translations
(Definition \ref{def:relweight}). These two are indeed the same space,
since both objects satisfy the \hyperref[eq:direction-rel]{(Direction)}
condition on non-split
edges. The (Direction) condition for relative vertex positions is stated
in the following Lemma, whose proof is left to the reader.  

\begin{lemma}{\rm(Direction condition on relative vertex positions)}
  Let $\tGam \xrightarrow{\kappa} \Gamma$ be a quasi-split tropical
  graph.  A tuple $(\cT(v) \in \Cone(\kappa,v))_{v \in \Ver(\tGam)}$
  is a positive multiple of a relative vertex position, that is $\cT \in \fw(\tGam,\Gamma)$, if
  and only if
  \begin{equation}\label{eq:slope-split} \text{\rm(Direction)} \quad
    \cT(v_+) - \cT(v_-) \in
    \begin{cases}
      \R_{\geq 0} \cT(e), & e \in \Edge(\tGam) \bs \Edge(\bGam),\\
      \R\cT(e), & e \in \Edge(\bGam) \bs \Edge_s(\bGam).
    \end{cases}
  \end{equation}
  There is no (Direction) condition on the split edges
  $e \in \Edge_s(\bGam)$.
\end{lemma}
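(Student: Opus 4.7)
\emph{Proof sketch.} The plan is to verify the equivalence by case analysis on the edge types of $\tGam$, treating both sides as closed convex cones in $\prod_{v \in \Ver(\tGam)} \Cone(\kappa,v)$. The key observation is that the collapse morphism $\kappa$ behaves differently on the three classes of edges of $\tGam$: collapsed edges in $\Edge(\tGam) \bs \Edge(\bGam)$, uncollapsed non-split edges in $\Edge(\bGam) \bs \Edge_s(\bGam)$, and split edges in $\Edge_s(\bGam)$, and the direction conditions reflect precisely this trichotomy.

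For the forward implication, I would write $\cT = \alpha(\tilde \cT - \cT_\Gamma \circ \kappa)$ with $\alpha \geq 0$, $\tilde \cT \in \W(\tGam)$, and $\cT_\Gamma \in \W(\Gamma)$, and check the claim edge by edge. On a collapsed edge $e=(v_+,v_-)$ one has $\kappa v_+ = \kappa v_-$, so the $\cT_\Gamma$-contribution to $\cT(v_+) - \cT(v_-)$ vanishes and the (Direction) condition on $\tilde \cT$ in $\W(\tGam)$ directly yields membership in $\R_{\geq 0}\cT(e)$. On an uncollapsed non-split edge the edge directions in $\tGam$ and $\bGam$ agree since $\kappa$ restricts to a tropical edge collapse on $\tGam \bs \Edge_s(\bGam)$, so both $\tilde \cT(v_+) - \tilde \cT(v_-)$ and $(\cT_\Gamma \circ \kappa)(v_+) - (\cT_\Gamma \circ \kappa)(v_-)$ lie in $\R_{>0}\cT(e)$, and their signed difference lies in $\R\cT(e)$. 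Split edges impose no constraint, consistent with the lack of tropical structure across them.

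For the reverse implication, I would fix any $\cT_\Gamma \in \W(\Gamma)$ and set $\tilde \cT(v) := \cT_\Gamma(\kappa v) + \eps \cT(v)$ for a small $\eps > 0$, with the aim of exhibiting $\cT = \eps^{-1}(\tilde \cT - \cT_\Gamma \circ \kappa) \in \fw(\tGam,\Gamma)$. Since $P_\tGam(v) \subseteq P_\Gamma(\kappa v)$, dually $P_\Gamma(\kappa v)^\dual$ is a face of $P_\tGam(v)^\dual$ containing $\cT_\Gamma(\kappa v)$, and the condition $\cT(v) \in \Cone(\kappa,v)$ means $\cT(v)$ points from this face into $P_\tGam(v)^\dual$; thus for small $\eps$ the perturbation $\tilde \cT(v)$ lies in $P_\tGam(v)^\dual$. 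The (Direction) condition for $\tilde \cT$ on $\tGam \bs \Edge_s(\bGam)$ is then verified edge by edge: on a collapsed edge it becomes the hypothesized $\R_{\geq 0}$-condition scaled by $\eps$, and on an uncollapsed non-split edge the strict $\R_{>0}\cT(e)$ contribution from $\cT_\Gamma$ absorbs the $O(\eps)$ perturbation from $\cT$.

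The main point requiring care will be the boundary/closure behavior of both sides. When $\cT(v)$ sits on the boundary of $\Cone(\kappa,v)$, or when the hypothesized $\R_{\geq 0}$-condition degenerates to an equality on a collapsed edge, the constructed $\tilde \cT$ lands only in $\ol{\W(\tGam)}$ rather than in $\W(\tGam)$ itself. Since $\fw(\tGam,\Gamma)$ is by definition an $\R_{\geq 0}$-hull and hence a closed convex cone, and the set of tuples cut out by the (Direction) conditions is likewise closed, the equivalence extends to closures. I would handle this by first approximating $\cT$ by tuples in the relative interior of the (Direction) cone, where all $\R_{\geq 0}$-inequalities are strict and all $\cT(v)$ lie in the interior of $\Cone(\kappa,v)$, applying the construction to obtain honest elements of $\W(\tGam)$, and then taking limits; this is standard for rational polyhedral cones and is the only nontrivial step in an otherwise direct verification.
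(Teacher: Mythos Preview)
The paper explicitly leaves this proof to the reader, so there is no argument in the text to compare against; your approach is the natural one and presumably what the authors had in mind. The forward direction by case analysis on edge types and the reverse direction via the perturbation $\tilde\cT := \cT_\Gamma \circ \kappa + \eps\,\cT$ are both correct.

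The one weak point is your closure step. You assert that $\fw(\tGam,\Gamma)$ is ``by definition an $\R_{\geq 0}$-hull and hence a closed convex cone,'' but the $\R_{\geq 0}$-cone over an open convex set need not be closed (for instance $\R_{\geq 0}\cdot(0,1)^2 = \{0\}\cup\R_{>0}^2$). Concretely, if $e=(v_+,v_-)\in\Edge(\tGam)\setminus\Edge(\Gamma)$ is a collapsed edge, then since $\kappa v_+=\kappa v_-$ any nonzero element $\alpha(\tilde\cT - \cT_\Gamma\circ\kappa)$ of $\fw(\tGam,\Gamma)$ satisfies $\cT(v_+)-\cT(v_-)\in\R_{>0}\cT(e)$ strictly, whereas the (Direction) locus allows zero on that edge. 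So with the definitions taken literally the two sides may differ on their common boundary. What your argument does establish is that the two cones agree on their relative interiors and have the same closure, which is all the paper's applications (the dimension count for $\fw(\tGam,\Gamma)$, the discrepancy cone) actually use. This is a harmless ambiguity in the paper's conventions rather than a gap in your reasoning; stating the conclusion for closures, or noting that $\fw$ is tacitly taken to be the closed polyhedral cone it generates, makes the proof complete.
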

\begin{remark}\label{rem:trans-cone}{\rm(Cone of relative vertex
    positions)}
  For a quasi-split tropical graph
  $\tGam \xrightarrow{\kappa} \Gamma$,
  \[\fw(\tGam,\Gamma) \subset \oplus_{v \in
      \Ver(\tGam)}\Cone(\kappa,v) \subset \oplus_{v \in
      \Ver(\tGam)}\t_{P_\tGam(v)}\]
  is a rational cone (Definition \ref{def:ratcone}), which is seen as follows.  For any
  vertex $v$, $\Cone(\kappa,v)$ is a
  top-dimensional cone in $\t_{P_\tGam(v)}$. Furthermore, $\fw(\tGam,\Gamma)$ is cut
  out of $\oplus_{v \in \Ver(\tGam)}\Cone(\kappa,v)$ by the \hyperref[eq:slope-split]{(Direction)}
  condition; and for each of the
  edges, the direction condition can be written as a set of linear
  equalities with rational coefficients.
\end{remark}
\begin{example}
  The cones of relative vertex positions for the quasi-split tropical
  graphs in Figure \ref{fig:w-egs} are
  \[\fw(\tGam_1,\bGam_1)=\R_{\geq 0}, \quad \fw(\tGam_2,\bGam_2)=\R.\]
  Indeed, in $\tGam_2$, the vertex $v_+$ is free to move to both sides
  of its position in $\bGam_2$. On the other hand in $\tGam_1$, the
  edge $e_1$ is collapsed by $\kappa_1$, the vertex $\kappa_1(v_+)$ in
  $\bGam_1$ lies at the vertex of the dual complex, and in $\tGam_1$,
  $v_+$ is free to move along a line of direction $\cT(e_1)$.  It
  follows that the set of relative positions of $v_+$ in $\tGam_1$ is
  $\R_{\geq 0}$.
\end{example}
\begin{definition}{\rm(Discrepancy cone)}
  Let $\tGam \to \Gamma$ be a quasi-split tropical graph.  Define
  \begin{multline}
    \label{eq:diff}
    \Diff:=(\Diff_e)_{e \in \Edge_s(\bGam)}
    :  \fw(\tGam,\bGam) \to \t_\Gam \simeq \bigoplus_{e \in \Edge_s(\bGam)} \t/\lan \cT(e) \ran, \\
    \cT \mapsto (\pi_{\cT(e)}^\perp(\cT(v_+) -\cT(v_-)))_{e=(v_+,v_-)
      \in \Edge_s(\bGam)}
  \end{multline}
  as the amount by which a relative translation $\cT$ fails to satisfy
  the (Direction) condition at split edges $e \in \Edge_s(\bGam)$.
  The \em{discrepancy cone} for a quasi-split tropical graph is the
  image
  \begin{equation}
    \label{eq:discrepcone}
    \Disc(\tGam,\bGam) :=\Diff(w(\tGam,\bGam)) \subset \bigoplus_{e
      \in \Edge_s(\bGam)} \t/\lan \cT(e) \ran.  
  \end{equation}
  %
\end{definition}

\begin{remark}\label{rem:disc-poly}
  The set $\Disc(\tGam,\bGam)$ is in fact a rational cone, because
  $w(\tGam,\bGam)$ is a rational cone (Remark \ref{rem:trans-cone})
  and $\Diff$ is a rational linear map.
\end{remark}

\begin{definition} {\rm (Split tropical graph)}
  \label{def:splitgr}
  Let ${\etasp} \in \t$ be a generic element, which we call the
  \em{cone direction}.  A \em{split tropical graph} $\tGam$ with cone
  direction ${\etasp}$ is a quasi-split tropical graph $\tGam$ (as in
  Definition \ref{def:qsplit}) whose discrepancy cone
  $\Disc(\tGam,\Gamma)$ (see \eqref{eq:discrepcone}) satisfies the
  following \em{cone condition}: There is an increasing cone
  $C \subset (\R_{\geq 0})^n$ such that for all
  $(c_1,\dots, c_n) \in C$,
  \begin{equation}
    \label{eq:cc}
    \text{(Cone condition)} \quad (c_i \pi^\perp_{\cT(e_i)}({\etasp}))_i \in \Disc(\tGam,\Gamma).  
  \end{equation}
\end{definition}

In Lemma \ref{lem:spliteq}, we give other equivalent characterizations of the  cone condition in terms of increasing sequences of tuples.
\begin{definition}\label{def:inc-tup-e}
  {\rm(Increasing sequence of tuples for split edges)} A sequence of
  tuples
  \[(c_\nu(e))_{e \in \Edge_s(\bGam)} \in (\R_+)^{\Edge_s(\bGam)}\]
  is \em{increasing} if $c_\nu(e) \to \infty$ for all split edges
  $e \in \Edge_s(\bGam)$ and for a pair of split edges
  $e_i \prec e_j, e_i,e_j \in \Edge_s(\bGam)$ (by the ordering in
  Definition \ref{def:qsplit}\eqref{part:qsplitorder})
  \begin{equation}
    \label{eq:incsplit}
    \lim_\nu \tfrac {c_\nu(e_j)} {c_\nu(e_i)}=0.  
  \end{equation}
\end{definition}

\begin{lemma}\label{lem:spliteq}
  Let $\tGam \to \Gamma$ be a quasi-split tropical graph with cone
  direction ${\etasp} \in \t$. The following are equivalent for the
  discrepancy cone $\Disc(\tGam,\Gam)$:
  \begin{enumerate}
  \item $\Disc(\tGam,\Gam)$ satisfies the \hyperref[eq:cc]{(Cone condition)}.
  \item {\rm (Strong sequential cone condition)} For any increasing
    sequence of tuples
    $(c_\nu(e))_{e \in \Edge_s(\bGam)} \in (\R_+)^{\Edge_s(\bGam)}$,
    there exists $\nu_0$ such that
    \begin{equation}
      \label{eq:strongseq}
      (c_\nu(e) \pi^\perp_{\cT(e)}({\etasp}))_e  \in \Disc(\tGam,\bGam) \quad \forall \nu \geq \nu_0.
    \end{equation}
  \item {\rm (Weak sequential cone condition)} There exists an
    increasing sequence of tuples
    $(c_\nu(e))_{e \in \Edge_s(\bGam)} \in (\R_+)^{\Edge_s(\bGam)}$
    such that
    \begin{equation}
      \label{eq:weakseq}
      (c_\nu(e) \pi^\perp_{\cT(e)}({\etasp}))_e  \in \Disc(\tGam,\bGam) \quad \forall \nu.
    \end{equation}
  \end{enumerate}
\end{lemma}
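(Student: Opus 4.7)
The plan is to reduce all three conditions to statements about a single auxiliary cone and then invoke Lemma \ref{lem:inc-equiv}. Set $n := |\Edge_s(\bGam)|$ and label the split edges via the ordering $\prec_\tGam$ as $e_1 \prec \cdots \prec e_n$. Define the linear map
$$\Psi : \R^n \to \bigoplus_{e \in \Edge_s(\bGam)} \t/\la \cT(e) \ra, \qquad \Psi(\ul c) = (c_i \pi^\perp_{\cT(e_i)}(\etasp))_{i=1}^n,$$
and set $C^\ast := \Psi^{-1}(\Disc(\tGam,\bGam)) \cap (\R_{\geq 0})^n$. Since $\Disc(\tGam,\bGam)$ is a convex cone by Remark \ref{rem:disc-poly} and $\Psi$ is linear, $C^\ast$ is a convex cone in $(\R_{\geq 0})^n$, so Lemma \ref{lem:inc-equiv} applies directly to it.

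With this setup, the three conditions translate transparently into statements about $C^\ast$: the \eqref{eq:cc} (Cone condition) becomes ``$C^\ast$ contains an increasing cone $C$''; the Strong sequential condition \eqref{eq:strongseq} becomes ``the tail of every increasing sequence of tuples in $(\R_+)^n$ lies in $C^\ast$''; and the Weak sequential condition \eqref{eq:weakseq} becomes ``some increasing sequence of tuples in $(\R_+)^n$ lies in $C^\ast$''. I would then close the cycle of implications. For (Cone) implies (Strong): given an increasing cone $C \subset C^\ast$ and any increasing sequence $(\ul c_\nu)$, the Strong sequence clause of Lemma \ref{lem:inc-equiv} applied to $C$ places the tail of $(\ul c_\nu)$ in $C \subset C^\ast$. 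For (Strong) implies (Weak): exhibit one explicit increasing sequence in $(\R_+)^n$, for instance $c_\nu(i) := \nu^{n-i+1}$, whose tail then lies in $C^\ast$ by (Strong). For (Weak) implies (Cone): apply the Weak-sequence-implies-increasing-cone direction of Lemma \ref{lem:inc-equiv} to $C^\ast$ to conclude that $C^\ast$ is itself an increasing cone, and take $C := C^\ast$.

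The only point requiring attention is that $\Psi$ involves the generic, hence in general irrational, vector $\etasp$, so $C^\ast$ is not a rational cone; this is not an obstacle because Lemma \ref{lem:inc-equiv} is formulated for arbitrary cones in $(\R_{\geq 0})^n$ without a rationality hypothesis. Genericity of $\etasp$ plays no role in the present lemma—it enters the theory only later, via Lemma \ref{lem:inc-gen-cone}, where it is used to force the discrepancy cone to be top-dimensional.
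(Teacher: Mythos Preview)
Your proposal is correct and is exactly the paper's approach: the paper's proof is the single sentence ``The result follows by applying Lemma~\ref{lem:inc-equiv} to the cone $\{c \in (\R_{\geq 0})^{\Edge_s(\Gamma)} : (c(e)\pi^\perp_{\cT(e)}(\etasp))_e \in \Disc(\tGam,\Gamma)\}$,'' which is precisely your $C^\ast$. Your write-up simply unpacks the three-way equivalence more explicitly and correctly flags that rationality of $C^\ast$ is neither available nor needed.
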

\begin{proof}
  The result follows by applying Lemma \ref{lem:inc-equiv} to the cone
  \[\{c \in (\R_{\geq 0})^{\Edge_s(\Gamma)} : (c(e)\pi^\perp_{\cT(e)}(\etasp))_e \in \Disc(\tGam,\Gamma)\}.\]
\end{proof}

The main result of the section is that the cone condition for a split
tropical graph implies that the discrepancy cone is top-dimensional:
\begin{proposition}{\rm(Dimension of discrepancy cone)}
  \label{prop:discrdim}
  Suppose $\tGam \to \Gam$ is a split tropical graph, with cone
  direction ${\etasp}$. Then,
  \begin{equation}
    \label{eq:diffdim}
    \dim(\Disc(\tGam,\Gam))
    = |\Edge_s(\Gam)|(\dim(\t) -1). 
  \end{equation}
  Consequently, $\dim(\fw(\tGam,\Gamma)) \geq |\Edge_s(\Gam)|(\dim(\t) -1).$
\end{proposition}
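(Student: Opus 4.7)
The plan is to assemble the statement directly from the preparatory lemmas on rational cones and genericity, using the cone condition in its sequential form.

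First, I would record the ambient space. The target of $\Diff$ is
\[ V := \bigoplus_{e \in \Edge_s(\bGam)} \t/\lan \cT(e) \ran, \qquad \dim V = |\Edge_s(\bGam)|(\dim \t - 1), \]
and decomposes as a direct sum of the rational vector spaces $V_e := \t/\lan \cT(e) \ran$, each carrying the rational lattice induced from $\t_\Q$ by the projection $\pi^\perp_{\cT(e)}$. By Remark \ref{rem:quo2sub}, the genericity of $\etasp \in \t$ is inherited by each projection $\pi^\perp_{\cT(e)}(\etasp) \in V_e$. Next, by Remark \ref{rem:disc-poly}, the discrepancy cone $\Disc(\tGam,\Gamma)$ is a rational cone in $V$, because $\fw(\tGam,\Gamma)$ is a rational cone (Remark \ref{rem:trans-cone}) and $\Diff$ is a rational linear map.

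Then I would apply the equivalences of Lemma \ref{lem:spliteq}: the split tropical graph $\tGam$ satisfies the cone condition, so by the Weak sequential cone condition there exists an increasing sequence of tuples $(c_\nu(e))_{e \in \Edge_s(\Gamma)} \in (\R_+)^{\Edge_s(\Gamma)}$ (in the sense of Definition \ref{def:inc-tup-e}) for which
\[ \eta_\nu := \big(c_\nu(e)\, \pi^\perp_{\cT(e)}(\etasp)\big)_{e \in \Edge_s(\Gamma)} \in \Disc(\tGam,\Gamma) \quad \text{for all } \nu. \]
This is exactly the hypothesis of Lemma \ref{lem:inc-gen-cone} applied to the rational cone $C = \Disc(\tGam,\Gamma)$ in the graded vector space $V = \oplus_e V_e$, with the generic vectors $\eta_e = \pi^\perp_{\cT(e)}(\etasp) \in V_e$. (Here I would verify that the notion of ``increasing sequence'' from Definition \ref{def:inc-tup-e} matches that of Definition \ref{def:inc-tup} after choosing an enumeration of split edges consistent with the ordering $\prec_\tGam$ in Definition \ref{def:qsplit}\eqref{part:qsplitorder}.) Lemma \ref{lem:inc-gen-cone} then gives
\[ \dim \Disc(\tGam,\Gamma) = \dim V = |\Edge_s(\Gamma)|(\dim \t - 1), \]
which is \eqref{eq:diffdim}.

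Finally, for the consequence on $\fw(\tGam,\Gamma)$, I would use that $\Disc(\tGam,\Gamma)$ is by definition $\Diff(\fw(\tGam,\Gamma))$, and $\Diff$ is a linear map, so the dimension of its image is bounded above by the dimension of $\fw(\tGam,\Gamma)$; this yields the inequality $\dim \fw(\tGam,\Gamma) \geq |\Edge_s(\Gamma)|(\dim \t - 1)$. The only step requiring care is the compatibility check between the edge-indexed increasing condition \eqref{eq:incsplit} and the coordinate-indexed one \eqref{def:inc-tup}, but since Lemma \ref{lem:inc-gen-cone} is phrased in a coordinate-free manner (any ordering of the $V_i$'s works), this is routine. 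I do not anticipate a serious obstacle: all the genuine combinatorial and convex-geometric content has been packaged into Lemmas \ref{lem:inc-equiv}, \ref{lem:inc-gen-cone}, and \ref{lem:spliteq}, and the proof reduces to matching their hypotheses.
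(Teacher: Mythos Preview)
Your proposal is correct and follows essentially the same approach as the paper: you invoke Remark~\ref{rem:disc-poly} for rationality of the discrepancy cone, Lemma~\ref{lem:spliteq} to pass from the cone condition to the weak sequential form, and then Lemma~\ref{lem:inc-gen-cone} to conclude top-dimensionality, with the final inequality coming from $\Diff$ being linear. The only difference is that you spell out more of the bookkeeping (the ambient space, genericity via Remark~\ref{rem:quo2sub}, and the compatibility of the two increasing-sequence definitions), which the paper leaves implicit.
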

\begin{proof}
  The discrepancy cone $\Disc(\tGam,\Gamma)$ is a rational cone in
  $\oplus_{e \in \Edge_s(\tGam)}\t/\bran{\cT(e)}$ by Remark
  \ref{rem:disc-poly}.  By Lemma \ref{lem:spliteq},
  the \hyperref[eq:cc]{(Cone condition)} for the split tropical graph $(\tGam, \Gamma)$ implies
  the weak sequential cone condition \eqref{eq:weakseq}, which, by
  Lemma \ref{lem:inc-gen-cone} implies that $\Disc(\tGam,\Gamma)$ is
  top-dimensional. The last statement follows from the fact that the
  discrepancy cone is the image of $\fw(\tGam,\Gamma)$ under the
  linear map $\Diff$.
\end{proof}

\begin{definition}
  A split tropical graph $\tGam \to \Gamma$ is \em{rigid} if
$\Gamma$ is rigid and
  \[\dim(\fw(\tGam,\Gamma)) = |\Edge_s(\Gam)|(\dim(\t) -1).\]
\end{definition}

\subsection{Examples of split graphs}
We give some examples of split tropical graphs.  In most of our
examples the multiple cut $\PP$ consists of two orthogonally
intersecting single cuts, so that the dual complex is a square as in
Figure \ref{fig:2ddef}. The torus $T$ in this case is $(S^1)^2$.
\begin{figure}[h]
  \centering\scalebox{.6}{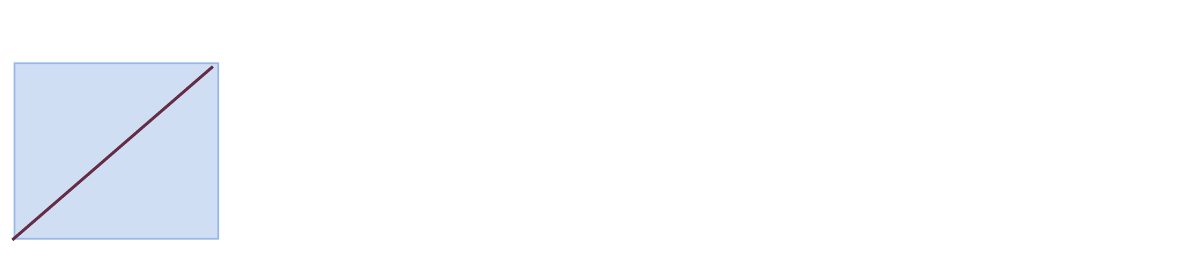}
  \caption{Split graphs $(\tGam_1,\bGam)$, $(\tGam_2,\bGam)$ with
    split edge $e$ and a cone direction ${\etasp}$ satisfying
    $\bran{{\etasp},(1,-1)} > 0$.}
  \label{fig:2ddef}
\end{figure}
\begin{example}
  To split the edge $e$ in $\Gamma$ in Figure \ref{fig:2ddef}, we take
  the cone direction $\etasp$ such that $\bran{{\etasp},(1,-1)} > 0$.
  Both $\tGam_1$, $\tGam_2$ are split tropical graphs.
  %
  \begin{enumerate}
  \item For $\tGam_1$, the cone of relative vertex positions is
    \[\fw(\tGam_1,\Gamma)=\{(\cT(v_+))\}=\{(2,1)t: t \geq 0\},\]
    where $\cT(v_+)$ is to be interpreted as the position of $v_+$ in
    $\tGam_1$ relative to its position in $\Gamma$, and we do not list
    the vertices for which the positions in $\tGam_1$ and $\Gamma$ are
    the same (which is the vertices in $\tGam_1 \bs v_+$). The
    discrepancy cone is
    \[\Disc(\tGam_1,\bGam)=\pi_{\cT(e)}^\perp\{(\cT(v_+) -
      \cT(v_-))\}= \pi^\perp_{(1,1)}\{(2,1)t : t \geq 0\} \]
    which contains the vector $\etasp$.
  \item In a similar way, for $\tGam_2$,
    \[\fw(\tGam_2,\Gamma)=\{(\cT(v_-))\}=\{(0,-1)t: t \geq 0\},\]
    and the discrepancy cone is
    \[\Disc(\tGam_2,\bGam)=\pi_{\cT(e)}^\perp\{(\cT(v_+) -
      \cT(v_-))\}= \pi^\perp_{(1,1)}\{(0,1)t : t \geq 0\} .\]
  \end{enumerate}
  Both split tropical graphs $\tGam_1$, $\tGam_2$ are
  rigid. 
\end{example}
\begin{figure}[h]
  \centering\scalebox{.55}{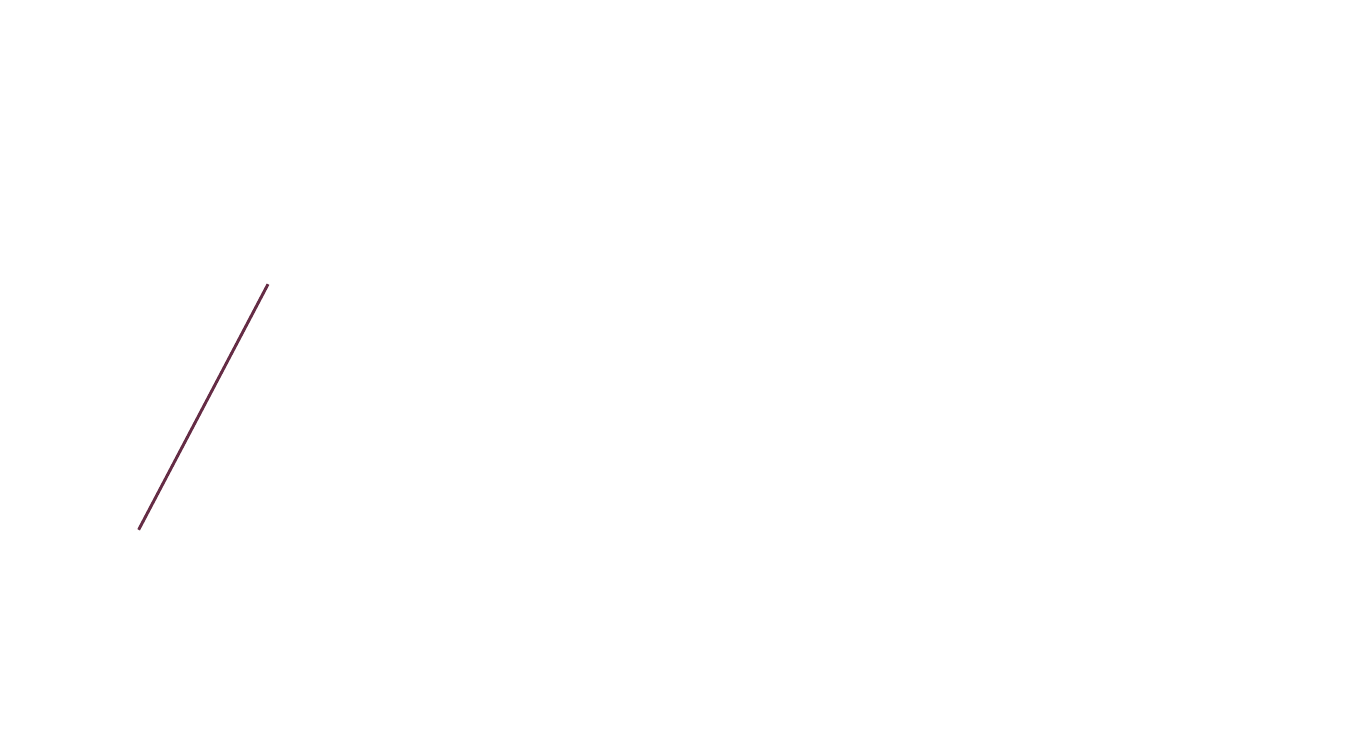}
  \caption{$(\tGam_1,\Gamma)$ is not a split graph. $(\tGam_2,\Gamma)$
    is a split graph since the cone direction $\eta_2$ is generic. }
  \label{fig:3ddef-intro}
\end{figure}
\begin{example}
  In this example, we will see that the genericity of the cone
  direction is necessary to ensure that the discrepancy cone is
  top-dimensional.  Consider a multiple cut $\PP$ consisting of three
  orthogonal single cuts, so the structure torus is $T=(S^1)^3$, and
  the dual complex $B^\dual \subset \t$ is a cube as in Figure
  \ref{fig:3ddef-intro}. 
%
  \begin{enumerate}
  \item The quasi-split graph $(\tGam_1,\Gamma)$ satisfies the \hyperref[eq:cc]{(Cone condition)} for the cone direction $\eta_1:=(1,1,0) \mod (1,1,1)$,
    which is not a generic direction in $\R^3/\lan(1,1,1)\ran$.  The
    relative vertex position cone is
    \[\fw(\tGam_1,\Gamma)=\{(\cT(v_-))\}=\{-(1,1,0)t:t \geq 0\},\]
    and the discrepancy cone is
    \[\Disc(\tGam_1,\Gamma)= \pi^\perp_{\cT(e)}\{(\cT(v_+)
      -\cT(v_-))\} = \pi^\perp_{(1,1,1)}\{(1,1,0)t:t \geq 0\} .\]
    The discrepancy cone contains the cone direction $(1,1,0)$, but is
    not top-dimensional in $\R^3/\lan(1,1,1)\ran$.  Consequently
    $\tGam_1$ is not a split graph.
  \item The quasi-split tropical graph $(\tGam_2,\Gamma)$ satisfies
    the \hyperref[eq:cc]{(Cone condition)} for a generic cone direction
    \[ \eta_2:=(r,1,0), \ r \notin \Q, \ \hh<r<2\]
    and therefore $\tGam_2$ is a split tropical graph.  The relative
    vertex position cone is
    \[\fw(\tGam_2,\Gamma)=\{(\cT(v_+),\cT(v_-)))\}=\{((2,1,0)t_1,-(1,2,0)t_2):
      t_1,t_2 \geq 0\},\]
    and the discrepancy cone is
    \[\Disc(\tGam_2,\Gamma)= \pi^\perp_{(1,1,1)}\{(2,1,0)t_1 +
      (1,2,0)t_2:t_1,t_2 \geq 0\}, \]
    which contains the cone direction $(r,1,0)$ and is top-dimensional
    in $\R^3/\bran{(1,1,1)}$.
  \end{enumerate}
\end{example}
\begin{figure}[h]
  \centering\scalebox{.7}{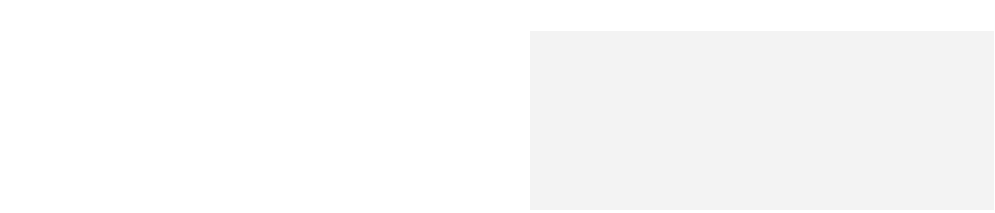}
  \caption{In 
  the examples shown, 
  dropping the direction condition on split edges automatically
  produces split tropical graphs.}
\label{fig:drop-match}
\end{figure}

\begin{example}\label{ex:drop-match}
  A split tropical graph need not always have additional vertices
  compared to the base tropical graph.  In some cases, dropping the
  direction condition \eqref{T-slopecond} on split edges (and
  analogously the matching condition on the corresponding split nodes
  $w(e)$) produces a top-dimensional discrepancy cone
  $\Disc(\tGam)$. For example in Figure \ref{fig:drop-match} dropping
  the direction condition at edge $e$ in the graph $\Gamma_1$ produces
  one dimension of symmetry in the tropical graph $\tGam_1$. That is,
  the graph $\tGam_1$, which is $\Gamma_1$ with the direction
  condition at $e$ forgotten, is a split tropical graph with
  discrepancy cone $\fw(\tGam_1)=\R$. Similarly dropping the direction
  condition on edges $e_1$, $e_2$, $e_3$ in $\Gamma_2$ in Figure
  \ref{fig:drop-match} produces three dimensions of symmetry in
  $\tGam_2$. Indeed in $\tGam_2$, the vertex $v_0$ is free to move in
  two dimensions, and the vertex $v_1$ is independently free to move
  in one dimension. Therefore, $\tGam_2$ is a split tropical graph
  with discrepancy cone $\fw(\tGam_2)=\R^3$.  In general, dropping the
  direction condition on split edges does not guarantee a
  top-dimensional discrepancy cone, such as in the graph $\Gamma$ in
  Figure \ref{fig:2ddef} and in Figure \ref{fig:4free} below.
\end{example}
\begin{figure}[h]
  \centering\scalebox{.8}{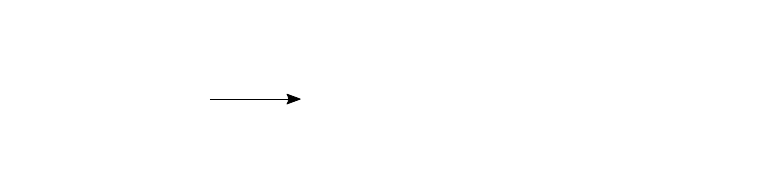}
  \caption{ $(\tGam$, $\Gamma)$ is a split graph with cone direction
    ${\etasp}$ and split edges ordered as $e_1,\dots,e_4$. $(\tGam'$,
    $\Gamma)$ is not a split tropical graph.}
  \label{fig:4free}
\end{figure}
\begin{example}\label{ex:4free}
  In this example we show that the increasing cone condition for split
  tropical graphs is necessary, and it can not be replaced by the
  condition that the cone direction is contained in the individual
  discrepancy cones of each of the split edges. For example, the
  quasi-split graph $(\tGam',\Gamma)$ in Figure \ref{fig:4free}
  satisfies the latter condition, namely, for each split edge
  $e=(v_+,v_-) \in \Edge_s(\tGam')$,
  \[\pi_{\cT(e)}^\perp({\etasp}) \in \R_+\bran{\{\cT(v_+) - \cT(v_-):
      \cT \in \W(\tGam',\Gamma)\}},\]
  but the discrepancy cone is not top-dimensional. In fact the
  discrepancy cone of $\tGam'$ is two-dimensional since the vertex
  $v_0$ can only move in two dimensions and the other vertices are
  fixed. However, $(\tGam,\Gamma)$ in Figure \ref{fig:4free} satisfies
  the \hyperref[eq:cc]{(Cone condition)}, and therefore, is a split tropical graph. We
  continue analyzing this graph in the next example.
\end{example}

\begin{example}\label{ex:inc-4free}
  The increasing cone condition for split graphs is equivalent to
  saying that we may split one edge at a time (following the ordering
  $\prec$), and at each step the cone direction ${\etasp}$ is in the
  discrepancy cone. More precisely, given a tropical graph $\Gamma$
  with a set of ordered split edges $e_1,\dots,e_n$, the increasing
  cone condition is equivalent to the existence of a sequence of split
  tropical graphs $(\tGam_1,\Gamma), \dots, (\tGam_n,\Gamma)$ such
  that for any $k$ the split edges of $(\tGam_k,\Gamma)$ are
  $e_1,\dots,e_k$, and for a small enough relative vertex position
  $\cT \in \W(\tGam_{k-1},\Gamma)$,
  \begin{multline*}
    \pi^\perp_{\cT(e_k)}({\etasp}) \in \R_+\{\Diff_{e_k}(\cT') : \cT'
    \in \W(\tGam_k,\Gamma), \\ \Diff_e(\cT')=\Diff_e(\cT) \text{ for }
    e=e_1,\dots,e_{k-1}\}.
  \end{multline*}
  For the split tropical graph $(\tGam,\Gamma)$ in Figure
  \ref{fig:4free} the intermediate split graphs are given in Figure
  \ref{fig:inc-4free}. Splitting each of the last two edges $e_3$,
  $e_4$ has the effect of `forgetting the edge direction', and is
  therefore not shown separately in Figure \ref{fig:inc-4free}.
\end{example}
\begin{figure}[h]
  \centering\scalebox{.7}{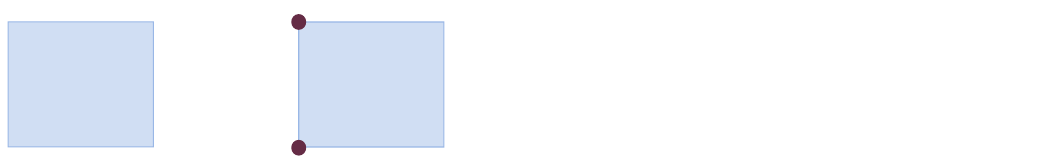}
  \caption{Splitting edges one at a time yields the increasing cone
    condition in the final graph.}
  \label{fig:inc-4free}
\end{figure}

\section{Split maps and split Fukaya algebras}

\subsection{Split maps}
A split map is modelled on a split tropical graph, and it is a version
of a broken map where there is no matching condition along split
edges. The definition of a split tropical graph requires us to choose
a generic cone direction ${\etasp} \in \t$ as in Definition
\ref{def:splitgr}.  Therefore, the definition of a split map also
requires the choice of a vector ${\etasp}$, although it is not
explicitly mentioned in this Section.
%
\begin{definition} {\rm(Split map)}
  \label{def:split-map}
  Given a split tropical graph $\tGam \to \bGam$, a \em{split map} consists of
  \begin{itemize}
  \item a collection of maps
  \[ u: C \to \XX = (u_v: C_v \to \ol \XC_{P_{\tGam}(v)})_{v \in
      \Ver(\tGam) }\]
  whose domain $C$ is a treed disk of type $\tGam$,
  \item   and framing
  isomorphisms for all \em{non-split} tropical edges
  \[\fr : T_{w_+(e)}\tC \tensor T_{w_-(e)}\tC \to \C, \quad e \in
    \Edge_\trop(\tGam) \bs \Edge_s(\Gamma),\]
  such that the edge-matching condition (as in Definition
  T-\ref{T-def:bmap}) is satisfied for edges
  $e \in \Edge(\tGam) \bs \Edge_s(\bGam)$. 
  \end{itemize}
  At the split edges
  $e \in \Edge_s(\bGam)$, the edge matching condition is dropped.
\end{definition}

The moduli spaces decompose into strata indexed by combinatorial
types, as before.  The \em{type} of a split map is given by
the split tropical graph $\tGam \to \bGam$ and the tangency and
homology data on
$\tGam$. 
The type of a split map is denoted by $\tGam$ when the base tropical
type $\bGam$ is clear from the context.

We wish to count rigid split maps to define the split Fukaya
algebra.

\begin{definition}\label{def:rigidsplit}
  {\rm(Rigidity for split maps)}
  The type $\tGam \to \bGam$ of a split map is \em{rigid} if
  \begin{enumerate}
  \item $\bGam$ is a rigid tropical graph,
  \item in $\tGam$ the only non-tropical edges
    $e \in \Edge(\tGam) \bs \Edge_\trop(\tGam)$ are boundary edges
    $e \in \Edge_{\white}(\tGam)$ with finite non-zero length
    $\ell(e) \in (0,\infty)$,
  \item for all interior markings $z_e$,
    $e \in \Edge_{\black,\to}(\tGam)$, the intersection multiplicity
    with the stabilizing divisor is $1$,
  \item and the cone of tropical vertex positions $\fw(\tGam,\bGam)$
    has dimension
    \begin{equation} \label{rightdim} \dim(\fw(\tGam,\bGam)) =
      |\Edge_s(\bGam)|(\dim(\t) -1).
    \end{equation}
  \end{enumerate}
\end{definition}

The definition of tropical symmetry in the case of split maps is
adjusted to reflect the lack of matching conditions at split edges.
\begin{definition}\label{def:trop-sym-split}
  A \em{tropical symmetry} on a split map with graph $\tGam \to \bGam$
  consists of a translation $g_v \in T_{P(v),\C}$ for each vertex
  $v \in \Ver(\tGam)$, and a framing translation $z_e \in \C^\times$
  for each non-split tropical edge
  $e \in \Edge_\trop(\tGam) \bs \Edge_s(\Gamma)$ that satisfies
  \begin{equation} \label{eq:symmedge} g(v_+) g(v_-)^{-1} =
    z_e^{\cT(e)}, \quad \forall e=(v_+,v_-) \in \Edge_\black(\tGam)
    \bs \Edge_s(\bGam),
  \end{equation}
  where we assume $z_e=1$ for non-tropical edges
  $e \in \Edge_-(\tGam) \bs \Edge_\trop(\tGam)$.
  Denote the group of tropical symmetries as
  \begin{equation}
    \label{eq:tsym-split}
    T_{\on{trop}}(\tGam) = \{ ( (g_v)_v, (z_e)_e ) \}.
  \end{equation}
\end{definition}

\begin{remark}\label{rem:dimtrop}
  {\rm(Dimension of the tropical symmetry group)} For a split tropical
  graph $\tGam \to \bGam$, the cone of relative vertex positions
  $\fw(\tGam,\Gamma)$ generates a subgroup of tropical symmetries: A
  relative vertex position $\cT_\tGam - \cT_\Gamma$ generates a
  subgroup
  \[\{(\exp z(\cT_\tGam(v) - \cT_\Gamma(\kappa v)))_{v \in
      \Ver(\tGam)} : z \in \C^\times\} \subset T_\trop(\tGam),\]
  see proof of Lemma T-\ref{T-lem:wtgen}.  For a split tropical graph
  $\tGam$, by Proposition \ref{prop:discrdim} the dimension of the cone $\fw(\tGam,\Gam)$ is at least
  $|\Edge_s(\Gam)|(\dim(\t) -1)$, and therefore,
  \begin{equation}
    \label{eq:dimlowerbd}
    \dim_\C(T_\trop(\tGam)) \geq |\Edge_s(\Gam)|(\dim(\t) -1).  
  \end{equation}
  If $\tGam$ is rigid, then \eqref{eq:dimlowerbd} is an equality.
\end{remark}

\begin{remark}
  {\rm(Splitting of the tropical symmetry group)} For a split tropical
  graph $\tGam \to \bGam$, the group $T_\trop(\tGam)$ is a product
  \begin{equation}
    \label{eq:trop-split}
    T_\trop(\tGam)=T_\trop(\tGam_1) \times \dots \times T_\trop(\tGam_s)
  \end{equation}
  of tropical symmetry groups of the connected subgraphs
  \[\tGam_1,\dots,\tGam_s \subset \tGam \bs \Edge_s(\bGam),\]
  since there are no matching conditions \eqref{eq:symmedge} on split
  edges.
\end{remark}

\begin{example}
  \begin{enumerate}
  \item In Figure \ref{fig:2ddef}, for the split tropical graph
    $(\tGam_1, \Gamma)$, the tropical symmetry group
    $T_\trop(\tGam_1) \simeq \C^\times$ splits as
    \[T_\trop(\tGam_1^+)=\{g_{v_+} \in e^{(2,1)t} : t \in \C^\times\}
      \simeq \C^\times, \quad T_\trop(\tGam_1^-)=\{\Id\}.\]
    For the graph $(\tGam_2, \Gamma)$,
    $T_\trop(\tGam_2)\simeq \C^\times$, and the splitting is
    \[T_\trop(\tGam_2^+)=\{\Id\}, \quad T_\trop(\tGam_2^-)=\{g_{v_-}
      \in e^{(1,0)t} : t \in \C^\times\}.\]
  \item In Figure \ref{fig:3ddef-intro}, for the split tropical graph
    $(\tGam_2, \Gamma)$, the tropical symmetry group
    $T_\trop(\tGam_2) \simeq (\C^\times)^2$ splits as
    \[T_\trop(\tGam_2^+)=\{g_{v_+} \in e^{(2,1,0)t} : t \in
      \C^\times\} \simeq \C^\times, \quad T_\trop(\tGam_2^-)=
      \{g_{v_-} \in e^{(1,2.0)t} : t \in \C^\times\} \simeq
      \C^\times.\]
  \end{enumerate}
\end{example}

To relate deformed maps to split maps, we will need a variation of
split maps where there is a matching condition on split nodes
also. Such maps should be thought of as lying on a slice of the
$T_\trop(\tGam,\Gamma)$-action.

\begin{definition}\label{def:fr-split}
  \begin{enumerate}
  \item {\rm(Framed split map)} A \em{framed split map} is a split map
    $u$ together with an additional datum of framings on split edges
    \[\fr_e:T_{w_+(e)}\tC \tensor T_{w_-(e)}\tC \to \C, \quad e \in
      \Edge_s(\Gam)\]
    such that the \hyperref[eq:nodematch-intro]{(Matching condition)}
    is satisfied at the split edges.
    %
  \item {\rm(Tropical symmetry for framed split maps)} A \em{tropical
      symmetry} for a framed split map with graph $\tGam \to \bGam$ is
    a tuple
    \[((g_v \in T_{P(v),\C})_{v \in \Ver(\Gamma)}, (z_e \in
      \C^\times)_{e \in \Edge_\trop(\tGam)})\] satisfying a matching
    condition on all interior edges :
    \begin{equation} \label{eq:symmedge-slice} g(v_+) g(v_-)^{-1} =
      z_e^{\cT(e)} \quad \forall e \in \Edge_\black(\tGam).
    \end{equation}
    The group of tropical symmetries for framed split maps is denoted
    by
    \begin{equation}
      \label{eq:tfr}
      T_{\trop,\fr}(\tGam,\Gamma) =  \{ ( (g_v)_v, (z_e)_e ) \ | \
      \eqref{eq:symmedge-slice} \}.
    \end{equation}
  \item {\rm(Multiplicity of a split tropical graph)} For a rigid
    split tropical graph $\tGam \to \Gam$, the group
    $T_{\trop,\fr}(\tGam)$ is finite, and
    \begin{equation}
      \label{eq:splitmult}
      \mult(\tGam):=|T_{\trop,\fr}(\tGam)|  
    \end{equation}
    is called the multiplicity of $\tGam$. See Example
    \ref{ex:framed-trop-sym}.
  \end{enumerate}
\end{definition}

\begin{remark}
  A framed split map is not a broken map, because the underlying
  tropical graph $\tGam$ does not satisfy the (Direction) condition on
  split edges.
\end{remark}

\begin{example}\label{ex:framed-trop-sym}
  {\rm(Framed tropical symmetry group)} The split tropical graph
  $\tGam_2 \to \Gam$ in Figure \ref{fig:3ddef-intro} is rigid, and
  consequently the framed tropical symmetry group
  $T_{\trop,\fr}(\tGam_2)$ is finite. However, this group is
  non-trivial. An element $(g,z) \in T_{\trop,\fr}(\tGam_2)$ satisfies
  the equations
  %
  \[ g_{v_0}=g_{v_1}=\Id, \quad g_{v_0}g_{v_+}^{-1}=z_{e_+}^{(2,1,0)},
    \quad g_{v_+}g_{v_-}^{-1}=z_{e}^{(1,1,1)}, \quad
    g_{v_-}g_{v_1}^{-1}=z_{e_-}^{(1,2,0)}.\]
  There are 3 elements in $T_{\trop,\fr}(\tGam_2)$ given by
  $z_{e_+}=z_{e_-}=\om$ where $\om$ is a cube root of unity.
\end{example}

\subsection{Split Fukaya algebras}
We define moduli spaces of split maps and use them to define
composition maps for split Fukaya algebras.  To define the moduli
spaces, we fix a generic cone direction ${\etasp} \in \t$. A
perturbation datum for split maps $\ul \Pe =(\Pe_\tGam)_\tGam$ is the
same as a perturbation datum for broken maps as in Section
T-\ref{T-sec:domdep}, and consists of maps
\[\Pe_\tGam=(J_\tGam, F_\tGam), \quad J_\tGam : \S_\tGam \to
  \J^\cyl_{\om_\XX,\tau}(\XX), \enspace F_\tGam : \T_\tGam \to C^\infty(L,\R)\]
 for all types $\tGam$ of curves
with base, with coherence conditions corresponding to morphisms of
stable treed curves.  For each such type $\tGam$, $J_\tGam$ is a
domain-dependent cylindrical $\om_\XX$-tamed almost complex structure (see \eqref{eq:omxxtau}) on the broken manifold, and
$F_\tGam$ is a domain-dependent Morse function on the Lagrangian
$L$. Note that $(J_\tGam, F_\tGam)$ depends only on the domain treed
curve, and not the tropical structure.

\begin{definition}\label{def:mod-split}
  {\rm(Moduli spaces of split maps)}
  Let ${\etasp} \in \t$ be a generic cone direction.  The \em{moduli
    space of split maps} of type $\tGam$ with cone direction
  ${\etasp}$ modulo the action of domain reparametrizations is denoted
  \[\tM^\spl_{\tGam}(L, \Pe_\tGam,{\etasp}).\]
  Quotienting by the action of the tropical symmetry group defines the
  \em{reduced moduli space}
  \[\M^\spl_{\tGam,\red}(L,{\etasp}):=\tM^\spl_{\tGam}(L,\Pe_\tGam,
    {\etasp})/T_\trop(\tGam,\bGam).\] The moduli space of framed split
  maps of type $\tGam$ modulo the action of domain reparametrizations
  is denoted
  \[\tM^\spl_{\fr,\tGam}(L, \Pe_\tGam,{\etasp}).\]
  The quotient $\tM^\spl_{\fr,\tGam}/T_{\trop,\fr}(\tGam)$ is equal to
  the reduced moduli space $\M^\spl_{\tGam,\red}(L,{\etasp})$.  (Split
  maps and framed split maps are defined in Definitions
  \ref{def:split-map}, \ref{def:fr-split} respectively.)  This ends
  the definition.
\end{definition}
\begin{remark}
  If the split tropical graph $\tGam$ is rigid then
  $T_{\trop,\fr}(\tGam)$ is a finite group, and
  $\tM^\spl_{\fr,\tGam}(L,{\etasp})$ is a finite cover of the reduced
  moduli space.
\end{remark}
\begin{remark} \label{rem:dim-split} The expected dimension of strata
  of split maps can be computed as follows. Let $(\ti{\Gamma},\Gamma)$
  be a type of split map. Let $\hat \Gamma$ be the broken map type
  obtained by collapsing all the tropical edges of $\tGam$ that do not
  occur in $\Gamma$, that is we collapse the edges
  \[e \in \Edge_{\trop}(\ti{\Gamma}) \bs \Edge(\Gamma).\]
  Thus the tropical graph of $\hat \Gamma$ is equal to $\Gamma$, and
  the boundary edges of $\hat \Gamma$ are the same as that of $\tGam$.
  Then the moduli space of split maps $\tM^\spl_{\tGam}(L,\eta)$ with
  boundary end points $\ul x \in (\cI(L))^{d(\white)}$ has expected
  dimension
%
  \[i^\spl(\tGam,\ul x):=i^\br(\hat\Gam,\ul x) +
    2|\Edge_s(\bGam)|(\dim(\t) - 1). \]
  Here $i^\br$ is the index function for types of broken maps defined
  in T-\eqref{T-eq:expdim}.  Indeed dropping the node matching
  condition \eqref{eq:match-proj} adds $2(\dim(\t)-1)$ dimensions for
  each split edge, and gluing along tropical edges $e$ doesn't change
  dimension as proved in Proposition T-\ref{T-prop:expdim}.  For a
  rigid split map, the tropical symmetry group
  $T_{\on{trop}}(\tGam,\bGam)$ has dimension
  $2|\Edge_s(\bGam)|(\dim(\t) - 1)$.  The expected dimension of the
  quotiented moduli space is then
  \[i^\spl_{\red}(\tGam,\ul x):=i^\spl(\tGam,\ul x)-
    2|\Edge_s(\bGam)|(\dim(\t) - 1)=i^\br(\hat \bGam,\ul x). \]
  This ends the Remark.
\end{remark}

\begin{proposition} \label{prop:defmoduli} 
  {\rm (Regularity for split  maps)} 
Let ${\etasp} \in \t$ be a generic element.  Let
  $(\tGam,\Gamma)$ be a type of an uncrowded split map, and suppose
  regular perturbation data for types $(\ti\Gamma',\Gamma)$ of based
  treed disks with $\ti\Gamma' < \ti\Gamma$ is given (where the
  ordering on types is as in \eqref{T-eq:type-order}). Then there is a
  co-meager subset $\PPe^{reg}_\tGam \subset \PPe_\tGam$ of regular
  perturbations for split maps of type $\tGam$ coherent with the
  previously chosen data such that if $\Pe_\tGam \in \PPe^{reg}_\tGam$
  then the following holds.  Let $\ul x \in \cI(L)^{d(\white)}$ be
  boundary end points for which the index
  $i^\spl_\red((\tGam,\bGam),\ul x)$ is $\leq 1$.  Then
  \begin{enumerate}
  \item \label{part:transv} {\rm(Transversality)} the moduli space of
    split maps $\tM^\spl_{\tGam}(\Pe_\tGam,L,{\etasp})$, framed split
    maps $\widetilde \M^\spl_{\fr,\tGam}(\Pe_\tGam,L,{\etasp})$ and
    the reduced moduli space
    $\M^\spl_{\tGam,\red}(\Pe_\tGam,L,{\etasp})$ are manifolds of
    expected dimension.
  \item \label{part:cpt}{\rm(Compactness)} The moduli space
    $\tM^\spl_{\tGam,\red}(\Pe_\tGam,L,{\etasp})$ is compact if
    $i^\spl_\red(\tGam \to \bGam,\ul x)=0$. If this index is $1$, then
    the compactification
    $\ol {\tM}^\spl_{\tGam,\red}(\Pe_\tGam, L , {\etasp})$ consists of
    codimension one boundary points which contain a boundary edge
    $e \in \Edge_\white(\tGam)$ with $\ell(e)=0$ or $\ell(e)=\infty$.
  \item \label{part:tub-split} {\rm(Tubular neighborhoods)} Let
    $\tGam$ be a type of split map with a single edge $e$ that is
    broken or has length $\ell(e)$ zero, and let the input/output
    tuple $\ul x$ be such that $i^\spl_{\red}(\tGam,\ul x)=0$. Then
    $\M^\spl_{\tGam,\red}(L,{\etasp})$ has a one-dimensional tubular
    neighborhood in any adjoining one-dimensional strata of split maps
    with the expected orientations. (The set of adjoining strata and
    the expected orientations are as in Theorem
    T-\ref{T-thm:bdry-glue}.)
  \item \label{part:bdry-split} {\rm(True boundary)} The true boundary
    of the oriented topological manifold
    \[\bigcup_{\tGam : \tGam \text{is rigid, } i^\spl_{\red}(\tGam,
        \ul x)=0}\ol \M^\spl_{\tGam,\red}(L,\ul x).\]
is in bijection with 
to the reduced moduli space of split maps with a
    single broken edge, with the orientation sign adjusted by a factor
    $(-1)^\circ$ which depends only on the type $\tGam$, $\ul x$ (and
    is the same as the corresponding sign for broken maps in Theorem
    T-\ref{T-thm:bdry-glue} \eqref{T-part:orien2}).
  \end{enumerate}
\end{proposition}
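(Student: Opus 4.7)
The plan is to mirror the corresponding statements for broken maps (Theorem T-\ref{T-thm:transversality} and Proposition T-\ref{T-prop:truebdry}) with the modifications dictated by Definition \ref{def:split-map} and Remark \ref{rem:dim-split}. The key structural observation that makes everything go through is the product decomposition: cutting $\tGam$ along all split edges produces connected subgraphs $\tGam_1,\dots,\tGam_{s(\tGam)}$ with no matching conditions among them, and the moduli space of framed split maps is correspondingly a product
\[
\widetilde \M^\spl_{\fr,\tGam}(\Pe_\tGam,L,{\etasp}) \;=\; \prod_{i=1}^{s(\tGam)}\widetilde \M^\br_{\tGam_i}(\Pe_{\tGam_i},L)
\]
of (standard) broken-map moduli spaces for each subgraph with the broken ends of split edges treated as additional interior markings whose tangency profile is recorded by $\cT(e)$.

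For part \eqref{part:transv} I would first use this product decomposition, combined with the Sard--Smale argument of Theorem T-\ref{T-thm:transversality}, to produce, for the given inductive data, a co-meager set $\PPe^{\reg}_\tGam$ of coherent perturbations $\Pe_\tGam$ making each factor $\widetilde \M^\br_{\tGam_i}$ a manifold of expected dimension; since the product of regular moduli spaces is regular, $\widetilde \M^\spl_{\fr,\tGam}$ is a manifold of the expected dimension $i^\spl(\tGam,\ul x)$ from Remark \ref{rem:dim-split}. The tropical symmetry group $T_{\trop,\fr}(\tGam)$ is finite on each rigid stratum, so one passes to the reduced moduli space by quotienting. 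Unfolding the framings gives the unframed count after dividing by the fibres of $T_{\trop,\fr}(\tGam)\to T_\trop(\tGam)$.

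For parts \eqref{part:cpt} and \eqref{part:bdry-split} I would use a Gromov-type argument analogous to Proposition T-\ref{T-prop:truebdry}, with one critical dimension-count check. Suppose a sequence $u_\nu$ in $\widetilde \M^\spl_{\tGam,\red}$ Gromov converges to a configuration of type $\tGam'\to\bGam'$ with a tropical edge collapse $\tGam'\to\tGam$. I would show first that the limit satisfies the cone condition for the same cone direction ${\etasp}$, so $\tGam'$ is again a split tropical graph; this is the content (and use) of Proposition \ref{prop:discrdim}, since the discrepancy cone can only get larger under edge collapse and contains the image of the original discrepancy cone. Then, using the index formula $i^\spl_\red(\tGam',\ul x)=i^\br(\hat\Gamma',\ul x)$ from Remark \ref{rem:dim-split}, together with the codimension bookkeeping already carried out for broken maps, any degeneration other than the breaking of a single boundary edge or the collapse of a boundary edge to zero length would strictly lower the expected dimension of the reduced moduli space below $i^\spl_\red(\tGam,\ul x)-1$, contradicting the index hypothesis. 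The ``zero length'' strata cancel in pairs exactly as in the broken case (fake boundary), leaving only the broken-edge strata as the true boundary.

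For part \eqref{part:tub-split} and the final gluing statement, I would invoke the standard gluing machinery from Theorem T-\ref{T-thm:bdry-glue} applied separately to each factor in the product decomposition above. Since the split edge at which gluing occurs is either in one of the connected components $\tGam_i$, or is a non-split tropical edge (whose gluing is unchanged from the broken case), the construction is a direct transcription; the orientation sign $(-1)^\circ$ is inherited from the corresponding factor and depends only on $\tGam$ and $\ul x$. The principal obstacle I anticipate is verifying the compactness step for configurations where several tropical edges are created simultaneously: one must check that the cone condition \eqref{eq:cc} is genuinely preserved under Gromov limits (not just the weaker property that $\Disc(\tGam',\Gamma)\supset\Disc(\tGam,\Gamma)$), so that the limit lives in a stratum whose reduced expected dimension is strictly smaller than that of $\tGam$. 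This is handled by applying Lemma \ref{lem:spliteq} to an increasing sequence of discrepancies for the approximating maps and noting that these persist in the limit.
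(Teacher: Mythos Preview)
Your approach diverges from the paper's in a way that creates a genuine complication you do not resolve.  The paper's key move for compactness is to work with \emph{framed} split maps (Definition \ref{def:fr-split}), which carry matching conditions at \emph{all} edges including split ones.  Because of this, a Gromov-converging sequence of framed split maps of type $\tGam$ is handled exactly as a sequence of broken maps: the translation sequences $\{t_\nu(v)\}$ satisfy the (Direction) condition at every edge, the limit is again a framed split map, and the dimension count goes through because the \emph{framed} tropical symmetry group $T_{\trop,\fr}(\tGam)$ is finite for rigid types.  There is no need to verify that the cone condition persists under Gromov limits, because the split tropical type is carried along automatically by the base graph $\Gamma$ and the framing.

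By contrast, you try to run compactness on the unframed/reduced side and are forced to confront preservation of the cone condition under degeneration.  You flag this as ``the principal obstacle'' and gesture at Lemma \ref{lem:spliteq}, but the argument you sketch is not complete: the claim that ``the discrepancy cone can only get larger under edge collapse'' is not what Proposition \ref{prop:discrdim} says, and the relation between $\Disc(\tGam',\Gamma)$ and $\Disc(\tGam,\Gamma)$ under a tropical edge collapse $\tGam'\to\tGam$ is not the simple containment you assert (the cones live in the same target but are images of different source cones cut out by different Direction conditions).  There is also a slip in your product formula: it is the \emph{reduced} moduli space $\M^\spl_{\tGam,\red}$ that splits as a product over the components of $\tGam\setminus\Edge_s(\Gamma)$ (Remark \ref{rem:factorize}), not the framed moduli space $\widetilde\M^\spl_{\fr,\tGam}$, which retains matching conditions at split edges and is therefore not a product.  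Your transversality argument and the treatment of parts \eqref{part:tub-split}--\eqref{part:bdry-split} are essentially fine, but the cleanest route to \eqref{part:cpt} is to pass to framed split maps and invoke the broken-map compactness verbatim, replacing finiteness of $T_\trop(\Gamma)$ by finiteness of $T_{\trop,\fr}(\tGam)$.
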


\begin{proof}[Proof of Proposition \ref{prop:defmoduli}]
  The proof of the first statement is by a Sard-Smale argument similar
  to the case of broken maps. The only variation is that there is no
  matching condition {\eqref{eq:symmedge-slice}} on the split edges.

  We prove the compactness result for the moduli space of framed split
  maps since that is a finite cover over the reduced moduli space.
  The compactness statement for framed split maps can be proved in
  exactly the same way as the corresponding proof for broken maps.  In
  fact, given a sequence of framed split maps $u_\nu$ of the same
  type, the limit is obtained by translations
  $\{t_\nu(v)\}_{v \in \Ver(\tGam)}$ that satisfy the
  \hyperref[eq:direction-rel]{(Direction)} 
  condition for all edges, including split
  edges. Consequently, the limit is a framed split map $u$.  The
  codimension one boundary strata are as required in the Proposition
  via the arguments in the proof of Proposition
  T-\ref{T-prop:truebdry}. The finiteness of the tropical symmetry
  group used in the proof of Proposition T-\ref{T-prop:truebdry} is
  replaced by the finiteness of the framed symmetry group
  $T_{\trop,\fr}(\tGam)$ of the split tropical graph $\tGam$.

  The proofs of parts \eqref{part:tub-split} and
  \eqref{part:bdry-split} are the same as the proofs of the
  corresponding results (Theorem T-\ref{T-thm:bdry-glue} and Remark
  T-\ref{T-rem:true-fake}) for broken maps.
\end{proof}

\begin{proposition}\label{prop:finsplit}
  Given $E>0$, there are finitely many types of split maps that have
  area at most $E$.
\end{proposition}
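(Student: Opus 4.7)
The plan is to reduce to the analogous finiteness statement for broken maps (the analogue of Proposition \ref{prop:pdtrans}\eqref{part:pdtrans3} for undeformed broken maps, cf. the results of \cite{vw:trop}) by observing that a split map type is obtained from a broken map type by finitely many additional choices once the area is bounded.

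First I would unpack the data of a split map type $\tGam \to \bGam$ with area $A(u) \leq E$. The data consist of (i) the base tropical graph $\bGam$, together with its set of split edges $\Edge_s(\bGam)$, (ii) the ordering $\prec_\tGam$ on $\Edge_s(\bGam)$, (iii) the underlying graph $\tGam$ with the edge-collapse $\kappa:\tGam \to \bGam$ and the tropical structure on each connected component of $\tGam \bs \Edge_s(\bGam)$, and (iv) the homology classes and tangency multiplicities with the stabilizing divisor at the interior markings. The total area $A(u)$ is the sum of the areas $A(u_v)$ over vertices $v \in \Ver(\tGam)$.

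Next I would bound each piece of data. The finiteness for broken maps in $\cite{vw:trop}$ shows that there are finitely many tropical graphs $\bGam$ of area $\leq E$ satisfying rigidity, and for each such $\bGam$ the set $\Edge_s(\bGam) \subseteq \Edge(\bGam)$ and hence the set of possible orderings $\prec_\tGam$ is finite. For the refinement $\tGam$, note that every vertex $v \in \Ver(\tGam)$ whose map is non-constant contributes at least a fixed positive amount of area (by the monotonicity lemma applied on the fixed collection of compactified pieces $\ol \XC_P$), while vertices with constant map component are constrained by the stability condition, which bounds their number in terms of the number of special points. Applying the same energy argument vertex-by-vertex also bounds the number of interior edges in $\tGam \bs \bGam$, hence the collapse $\kappa$ takes only finitely many values once $\bGam$ is fixed. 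The polytope assignments $P_\tGam(v) \subseteq P_\bGam(\kappa v)$ live in the finite poset $\PP$, and the edge directions $\cT(e) \in \t_{P(e),\Z}$ on the new tropical edges are again controlled by the area of the adjacent components via the standard estimate (as in \cite{vw:trop}) relating edge direction to area loss through a cylindrical neck. Finally, the tangency multiplicities and homology classes of the components are bounded by the total area.

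The main obstacle in this argument is that, in contrast to broken maps, split edges carry no matching condition, so a priori one might worry that a single component $u_v$ in a neck piece $\XC_{P(v)}$ could be replaced by infinitely many translates or by components with arbitrarily large edge directions. This is ruled out by two observations: first, the tropical symmetry group $T_\trop(\tGam)$ only changes the combinatorial type through a discrete quotient, so translates are not counted as distinct types; second, the cone condition \eqref{eq:cc} forces the discrepancy cone $\Disc(\tGam,\bGam)$ to be a rational cone cut out by $\hyperref[eq:slope-split]{\text{(Direction)}}$ inequalities that use only the edge directions $\cT(e)$, which are themselves integrally bounded by area. Combining these finitely many bounded choices for (i)--(iv) yields finiteness of split map types with $A(u) \leq E$.
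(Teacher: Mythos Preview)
Your proposal is correct and follows the same approach the paper takes: the paper's own proof simply reads ``The proof is identical to the corresponding result Proposition T-\ref{T-prop:finno} for broken maps, and is omitted.'' Your ``main obstacle'' is not really one---dropping the matching condition at split edges enlarges the moduli space but not the set of combinatorial \emph{types}, and the cone condition only cuts down the set of quasi-split graphs, so the energy and monotonicity bounds from the broken-map case carry over verbatim, which is exactly the paper's point.
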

\noindent The proof is identical to the corresponding result Proposition
T-\ref{T-prop:finno} for broken maps, and is omitted.

Next we define the \em{split Fukaya algebra} whose composition maps
are given by counts of rigid split maps. In order to obtain homotopy
equivalence with other versions of Fukaya algebras, the definition
must use framed split maps. We instead use a count of split maps in
the reduced moduli space weighted by the size of the discrete tropical
symmetry group, since the latter is a finite quotient of the former.

\begin{definition} \label{def:tfa}
  Let ${\etasp} \in \t$ be a generic cone direction, and let $\ul \Pe$
  be a regular perturbation datum for all based curve types.  The
  \em{split Fukaya algebra} is the graded vector space
  \[CF_{\split}(\XX,L,{\etasp}):=CF^{\on{geom}}(L) \oplus \Lam
    x^{\greyt}[1] \oplus \Lam x^{\whitet}\]
  equipped with composition maps
  \begin{equation}\label{eq:md-split}
    m^{d(\white)}_\spl(x_1,\ldots,x_{d(\white)}) =
    \sum_{x_0,u \in \M_{\tGam,\spl,\red}(\XX, L,D,\ul{x})_0}
    \mult(\tGam)  w_s(u)   x_0 \end{equation}
  where
  \begin{equation}
    \label{eq:wtwsu-split}
    w_s(u):=(-1)^{\heartsuit}(d_\black(\Gam)!)^{-1} (s(\tGam)!)^{-1} \Hol([\partial u]) \eps(u)
    q^{A(u)},
  \end{equation}
  where $s(\tGam)$ is the number of split edges in the type $\tGam$
  and the other symbols in \eqref{eq:wtwsu-split} are as in
  T-\eqref{T-eq:wtwu}. The orientation sign for split maps is defined
  in the same way as broken maps, see Remark
  T-\ref{T-rem:orientmap}. (Indeed, dropping a matching condition,
  which is a complex condition does not affect the determinant line
  bundle.) In \eqref{eq:md-split}, the combinatorial type $\tGam$ of
  the split map $u$ ranges over all rigid types with $d(\white)$
  inputs (see Definition \ref{def:rigidsplit} for rigidity), and
  $\mult(\tGam) := |T_{\trop,\fr}(\tGam)|$ is the multiplicity of the
  split map from \eqref{eq:splitmult}.
\end{definition}

\begin{remark}
  Any one-dimensional moduli space of split maps has true and fake
  boundary strata as in Figure \ref{fig:truebdry-split}, which look
  very similar to their analogue for broken maps shown in Figure
  T-\ref{T-fig:true-fake}.
\end{remark}

\begin{figure}[h]
  \begin{center}
    \scalebox{.6}{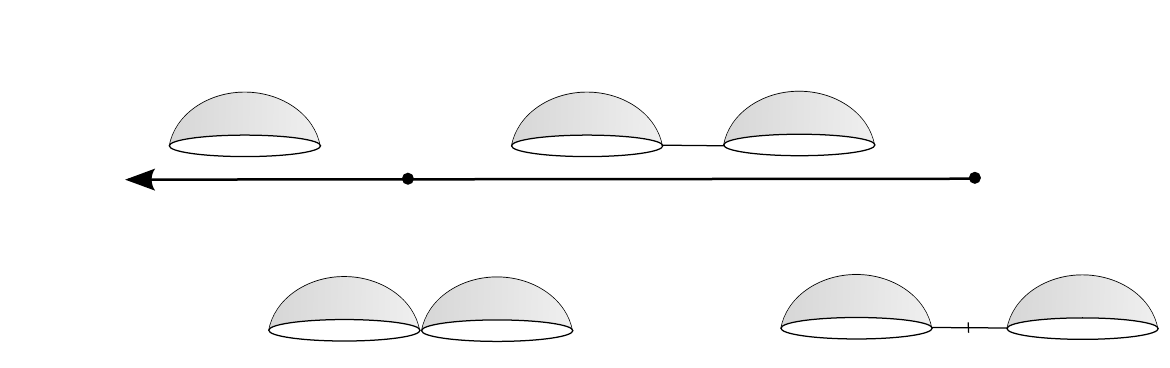} \end{center}
  \caption{True and fake boundary strata of a one-dimensional
    component of the moduli space of split maps with split edges
    $e_1$, $e_2$. The sphere components lie in different pieces of the
    tropical manifold.}
  \label{fig:truebdry-split}
\end{figure}

The composition maps $(m^{d(\white)}_\spl)_{d(\white) \ge 0}$ satisfy
the \ainfty-axioms. Indeed, as in the broken and unbroken cases, the
\ainfty-axioms are proved by a count of end-points of one-dimensional
moduli spaces of rigid split maps; the boundary components are
described by Proposition \ref{prop:defmoduli} \eqref{part:bdry-split};
and the combinatorial factors arising from the distribution of
interior markings are accounted exactly as in the unbroken case in
Theorem T-\ref{T-thm:yields-unbr}.  Therefore,
$ CF_{\split}(\XX,L,\ul \Pe, {\etasp})$ is an \ainfty-algebra.
 
\begin{remark}\label{rem:factorize}
  The composition map $m_\trop^d$ can be expressed as a sum of
  products. The sum is over rigid types $\tGam$ of split maps, and the
  product is over the connected components
  $\tGam_1,\dots,\tGam_{s(\tGam)}$ of the graph
  $\tGam \bs \Edge_s(\bGam)$. Such a decomposition is possible because
  split maps do not have any edge matching conditions. Therefore, for
  any type $\tGam$ of a rigid tropical map, the reduced moduli space
  $\M^\spl_{\tGam,\red}(L,\ul \eta)$ is a product of reduced moduli
  spaces :
  \[ \M^\spl_{\tGam,\red}(L,\ul \eta)=
    \prod_{i=1}^{s(\tGam)}\M_{\tGam_i,\red}.\]
  Here, $\M_{\tGam_i}$ is the quotient
  \begin{equation}
    \label{eq:moduli-product}
    \M_{\tGam_i,\red}:=\tM_{\tGam_i}/T_\trop(\tGam_i,\bGam_i),  
  \end{equation}
  of the moduli space $\tM_{\tGam_i}$ of maps modelled on the subgraph
  $\tGam_i \subset \tGam$ by the component of the tropical symmetry
  arising from $\tGam_i$ as in \eqref{eq:trop-split}. Thus the
  composition map decomposes as
  \begin{equation}
    \label{eq:mdsplit}
    m^d_\spl=\sum_{\tGam:d_\white(\tGam)=d} m^d_{\spl,\tGam}, \quad m^d_{\spl,\tGam}=
    \tfrac {(-1)^{\heartsuit} \mult(\tGam)} {d_\black(\tGam)! s(\tGam)!} \prod_{i=1}^{s(\tGam)} m_{\tGam_i},
  \end{equation}
  where
  \begin{itemize}
  \item the sum ranges over all rigid types $\tGam$ of split maps with
    $d$ inputs
  \item and
    \[m_{\tGam_i}:= \sum_{u \in
        \M_{\tGam_i,\red}(L,{\etasp})_0}\Hol([\partial u]) \eps(u)
      q^{A(u)}\] is a weighted count of rigid elements in
    $\M_{\tGam_i,\red}$, where $y$, $\eps$ are as in
    \eqref{T-eq:wtwu}.
  \end{itemize}
  We remark that in the product decomposition
  \eqref{eq:moduli-product} of moduli spaces, exactly one of the
  components $\tGam_0$ has boundary on the Lagrangian.  For all other
  components $\tGam_i$, $i \neq 0$, the domain components are
  spheres. Therefore, the number $m_{\Gamma_i}$ is invariant under
  choices of perturbations, and may be viewed as a relative
  Gromov-Witten invariant.  Finally, we point out that the
  decomposition in \eqref{eq:moduli-product} is not a full splitting
  since any component $\tGam_i$ may contain tropical nodes satisfying
  matching conditions. \label{page:factor}
\end{remark}

\section{Homotopy equivalences}
\label{sec:homo2split}

In this section, we will show that deformed Fukaya algebras are
\ainfty homotopy equivalent to split Fukaya algebras. Recall that
for the split Fukaya algebra $CF_{\split}(\XX,L,{\etasp})$ the
composition maps are counts of split disks with a generic cone
direction ${\etasp} \in \t$.  On the other hand, defining a deformed
Fukaya algebra $CF_{\deform}(\XX,L,\ul \eta)$ requires us to choose a
coherent deformation datum $\ul \eta=(\eta_\tGam)_\tGam$, which
consists of a deformation parameter for each split edge
\begin{equation}
  \label{eq:tgamc}
  (\eta_\tGam(e,[C]))_{e \in \Edge_s(\tGam)} \in T_{\Gam,\C}:= \prod_{e \in \Edge_s(\Gam)} T_\C
  /T_{\cT(e),\C}  
\end{equation}
varying smoothly with the domain curve $C$.  To prove the homotopy
equivalence we choose a sequence of deformation data
$(\ul \eta_\nu)_\nu$ compatible with the cone direction ${\etasp}$,
which roughly means that for any fixed domain curve $[C]$ and any
split edge $e$ the deformation parameter
$\eta_{\nu,\tGam}([C]) \in T_\C/T_{\cT(e),\C}$ goes to infinity in the
direction ${\etasp}$.  Split maps with cone direction ${\etasp}$ occur
as limits of sequences of $\ul \eta_\nu$-deformed maps of bounded
area. Conversely a rigid split map can be glued to yield a
$\ul \eta_\nu$-deformed map for large enough $\nu$.  This bijective
relation leads to the proof of the \ainfty homotopy equivalence.

We point out that for a converging sequence of deformed maps of index
zero, the index of the limit split map is equal to
$ \dim(T_{\Gam,\C})$ (defined in \eqref{eq:tgamc}), which is the
dimension of its tropical symmetry group.  The higher index of the
limit split map is accounted for by the absence of matching conditions
on split edges.  Thus in this section, we produce a cobordism from the
moduli space of framed split maps, which is a slice of the action of
the tropical symmetry group on the moduli space of split maps, to the
space of $\ul \eta$-deformed maps for any deformation datum
$\ul \eta$.

We start by defining a sequence of deformation data compatible with
the cone direction.
\begin{definition} {\rm(Compatible deformation data for a cone
    direction)}
  \label{def:compdef}
  Given a generic cone direction ${\etasp} \in \t$ and a type
  $(\tGam,\Gamma)$ of based curves, a sequence of deformation data
  \[ \eta_{\tGam,\nu} : \M_{\tGam} \to \t_\Gam \simeq \bigoplus_{e \in
      \Edge_s(\Gam)} \t/\t_{\cT(e)} \]
  for a curve type $\tGam$ with base $\Gam$ is compatible with the
  cone direction ${\etasp}$ if
  \[\eta_{\tGam,\nu}=c_{\tGam,\nu} \pi^\perp_{\cT(e)}({\etasp}), \]
  and
  \[c_{\tGam,\nu}:\M_{\tGam} \times \Edge_s(\Gamma) \to \R_+\]
  is a continuous function such that for any converging sequence
  $m_\nu \to m$ in $\ol \M_{\tGam}$,
  $(c_{\tGam,\nu}(m_\nu,e))_{e \in \Edge_s(\Gamma),\nu}$ is an
  increasing sequence of tuples (as in Definition
  \ref{def:inc-tup-e}).
\end{definition}

For the gluing proof, we need a more restrictive class of deformation
data.
\begin{definition}
  {\rm(Uniformly continuous deformation data)} A sequence of
  deformation data $\ul \eta$ compatible with a cone direction
  ${\etasp}$ is \em{uniformly continuous} if for any split tropical
  graph $\tGam$ there is a constant $k$ such that for any two curves
  $C$, $C'$ of type $\tGam$
  \[\abs{\eta_{\tGam,\nu}([C]) - \eta_{\tGam,\nu}([C'])} \leq k
    \sum_{e \in \Edge_{{\white},-}}\abs{\ell(T_e,[C]) -
      \ell(T_e,[C'])} \]
  for all $\nu$. We recall that $\ell(T_e,[C])$ is the length of the
  treed segment $T_e$ in the curve $C$.
\end{definition}

Next, we construct a sequence of deformation data that is coherent,
compatible with the cone direction ${\etasp}$ and uniformly
continuous. We recall that moduli spaces of deformed maps are defined
using domain-dependent deformation data that are coherent under
morphisms of based curve types, and satisfy a
\hyperref[eq:taufactor]{(Marking independence)} property.

\begin{lemma}\label{lem:defseq}
  Given a generic cone direction ${\etasp} \in \t$, there exists a
  compatible sequence $\{\ul \eta_\nu\}_\nu$ of uniformly continuous
  coherent deformation data (as in Definition \ref{def:coh-deform}).
\end{lemma}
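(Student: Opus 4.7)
The plan is to exploit the (Marking independence) axiom to reduce the problem to constructing, for each type $(\tGam,\bGam)$ and split edge $e \in \Edge_s(\bGam)$, a continuous coefficient function
\[c_{\tGam,\nu}(\cdot,e) : [0,\infty]^{\Edge_{\white,-}(\tGam)} \to \R_{>0},\]
since compatibility with ${\etasp}$ fixes the target direction as $c_{\tGam,\nu}(\cdot,e)\pi^\perp_{\cT(e)}({\etasp})$. The key structural choice is to decompose each coefficient additively as
\[c_{\tGam,\nu}(\ell,e) = A_{\bGam,\nu}(e) + B_{\tGam}(\ell,e),\]
where $A_{\bGam,\nu}(e)$ is a $\nu$-dependent constant depending only on the base tropical graph $\bGam$ and the edge $e$, and $B_\tGam$ is a $\nu$-independent, globally bounded Lipschitz function of the boundary edge lengths. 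The additive form makes uniform continuity immediate: differences in edge lengths produce changes in $c_{\tGam,\nu}$ entirely controlled by $B_\tGam$, giving a Lipschitz constant independent of $\nu$.

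For the leading term, I would choose an integer weight function $p : \bigsqcup_\bGam \Edge_s(\bGam) \to \Z_{>0}$ that is strictly decreasing along every ordering $\prec_\bGam$ and is stable under (Cutting edges) in the sense that restriction of $p$ to each piece $\bGam_i$ of a cut base tropical graph $\bGam$ equals the weight function on $\bGam_i$. Such a $p$ is produced by a straightforward induction on the combinatorial complexity of base tropical graphs: enumerate base graphs by complexity and, at each step, assign weights to any new split edges using integers large enough to avoid collisions with previously assigned weights while respecting the local ordering. Setting $A_{\bGam,\nu}(e) := \nu^{p(e)}$ ensures $A_{\bGam,\nu}(e)\to\infty$ and, for $e_i\prec_\bGam e_j$, the ratio $A_{\bGam,\nu}(e_j)/A_{\bGam,\nu}(e_i)$ tends to $0$, so the strong sequential cone condition of Lemma \ref{lem:spliteq} holds along every sequence $m_\nu \to m$.

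The correction $B_\tGam$ is then built by induction on the complexity of based curve types $(\tGam,\bGam)$. On the open stratum $\M_\tGam$ one picks any smooth bounded function of the boundary treed lengths with a uniform Lipschitz bound. Extension to lower strata is done compatibly with the remaining morphism classes: (Making edge length/weight finite/non-zero) and (Collapsing edges) preserve the base tropical graph, so coherence amounts to matching boundary values with previously constructed $B_{\tGam'}$ for simpler $\tGam'$, which is achieved by interpolating with cutoff functions in the collar of the boundary stratum while preserving the Lipschitz bound; (Forgetting edges) is automatic since $B_\tGam$ depends only on lengths of boundary treed segments that are not forgotten. The main obstacle is coherence under (Cutting edges), where $\bGam$ itself changes. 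This is handled by choosing $B_\tGam$ as a sum
\[B_\tGam(\ell,e) = \sum_{i} B_{\tGam_i}(\ell|_{\tGam_i},e),\]
indexed by the connected components $\tGam_i$ of $\tGam \setminus \Edge_s(\bGam)$, with $B_{\tGam_i}$ depending only on boundary treed lengths internal to $\tGam_i$. Since cutting partitions the components $\tGam_i$ between the two resulting based curves and our choice of $p$ is cutting-stable, each piece of the cut data agrees with the independently constructed data on $\bGam_1$ and $\bGam_2$, giving full coherence and completing the construction of $\ul\eta_\nu$.
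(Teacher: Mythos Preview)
Your additive decomposition $c_{\tGam,\nu}=A_{\bGam,\nu}+B_{\tGam}$ is an attractive idea and genuinely different from the paper's interpolation scheme, but there are two concrete gaps, both in the treatment of the (Cutting edges) morphism.

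First, the cutting-stable weight function $p$ you need does not obviously exist. Recall that (Cutting edges) cuts a \emph{broken boundary edge} (a disk node with infinite treed length), and the base tropical graph $\bGam$ then splits into two base tropical graphs $\bGam_1,\bGam_2$, each carrying the restricted ordering on split edges. Your requirement is that $p_{\bGam}(e)=p_{\bGam_i}(e)$ whenever $e\in\Edge_s(\bGam_i)$, and simultaneously that $p_{\bGam}$ be strictly decreasing for $\prec_{\bGam}$. But the same small base graph $\bGam_1$ can arise as a piece of many different larger $\bGam$'s, and in each such $\bGam$ the split edges of $\bGam_1$ may be interleaved arbitrarily with those of the complementary piece $\bGam_2$. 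Concretely, if $\bGam_1$ has edges $e_1\prec e_2$ with $p(e_1)=4$, $p(e_2)=2$ (forced by some earlier step of your induction), and $\bGam_2$ has a single edge $e_3$ with $p(e_3)=3$ (forced by its role in some other graph $\bGam'$), then any $\bGam$ with ordering $e_1\prec e_2\prec e_3$ demands $p(e_3)<p(e_2)=2$, contradicting $p(e_3)=3$. Your inductive sketch (``assign integers large enough to avoid collisions'') does not address this kind of global consistency constraint.

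Second, your proposed form $B_{\tGam}(\ell,e)=\sum_i B_{\tGam_i}(\ell|_{\tGam_i},e)$, summed over the connected components $\tGam_i$ of $\tGam\setminus\Edge_s(\bGam)$, does not achieve coherence under (Cutting edges). You assert that ``cutting partitions the components $\tGam_i$ between the two resulting based curves,'' but this is false: the boundary edge being cut lies in the single disk-carrying component of $\tGam\setminus\Edge_s(\bGam)$, and cutting it \emph{splits that component in two} rather than distributing existing components. So your sum decomposition is not stable under cutting, and the boundary values of $B_{\tGam}$ at $\ell(e_0)=\infty$ need not match $B_{\tGam_1}+B_{\tGam_2}$.

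By contrast, the paper avoids both issues by not attempting a universal leading term. It inducts on the complexity of the connected component $\gamma=(\bGam,m,n,\gamma_0)$, fixes fresh constants $c_\nu(e)$ on the stratum $\M_\gamma^0$ of curves with no boundary nodes, takes the inductively given values on the true boundary $\partial\M_\gamma$ (where a boundary edge is broken), and linearly interpolates across the product $\M_\gamma^i\cong\M_\gamma^{0,i}\times[0,\infty]^i$. Uniform continuity is then obtained by spreading the interpolation over an interval of length $C_\nu=\sup c_\nu$, so that the derivative of the interpolating function is bounded by $C_\nu^{-1}$, independent of~$\nu$. This sidesteps any need for a global weight function compatible across all cuttings.
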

\begin{proof}
  We first describe connected components of the moduli space of based
  curves.  A connected component is determined by a tuple
  $\gamma:=(\bGam, m,n,\gamma_0)$ consisting of the base tropical
  graph $\bGam$, the number $m$ of boundary markings, the number $n$
  of interior markings, and a function
  \[\gamma_0: \{1,\dots,n\} \to \Ver(\bGam) \]
  that maps a marked point $z_i$ on a curve component $C_v \subset C$
  to $\kappa(v) \in \Ver(\bGam)$, and we denote the component by
  $\M_\gamma$.  The sequence of functions
  \begin{equation}
    \label{eq:cgndef}
    c_{\gamma,\nu} : \M_\gamma   \times \Edge_s(\bGam) \to \R    
  \end{equation}
  is constructed by induction on $m$, $n$, $|\Ver(\bGam)|$.  We
  consider a connected component $(\bGam,n,m,\gamma)$, and assume that
  a sequence of coherent data has been constructed for smaller
  types. The data on the smaller types determines $c_{\gamma,\nu}$ on
  the true boundary of $\M_{\gamma}$, which consists of curves with at
  least a single broken boundary edge.

  The deformation sequence on $\M_\gamma$ is constructed via a
  partition
  \[\M_\gamma=\cup_{i \geq 0} \M_\gamma^i,\]
  where $\M_\gamma^i$ consists of treed curves with exactly $i$
  boundary edges.  On $\M_\gamma^0$, for any split edge $e$ we fix
  $c_{\gamma,\nu}(m,e)$ to be a sequence of constant functions
  \[c_{\gamma,\nu}(m,e):=c_\nu(e) \in \R_+ \]
  such that $(c_\nu(e))_{e \in \Edge_s(\Gamma)}$ is an increasing
  sequence of tuples (as in \eqref{eq:incsplit}) with respect to the
  ordering $\prec_\tGam$ of split edges. Let
  \[C_\nu := \sup_i \{c_\nu(m,e) : m \in \M^0_\gamma \cup \partial
    \M_\gamma\}. \]

  We extend the deformation sequence to all of $\M_\gamma$ by
  interpolating between the codimension one strata
  $\partial \M_\gamma^0$ and $\partial \M_\gamma$.  The boundary
  $\partial \M_\gamma^0$ partitions into sets
  \[\partial \M_\gamma^0=\cup_{i \geq 1} \M_{\gamma}^{0,i}, \quad
    \M_{\gamma}^{0,i}:=\ol{\M_\gamma^0} \cap \M_\gamma^i,\]
  and $\M_\gamma^i$ is a product
  \begin{equation}
    \label{eq:bdryproduct}
    \M_\gamma^i=\M_\gamma^{0,i} \times [0,\infty]^i,   
  \end{equation}
  such that projection to the second factor is equal to the edge
  length function.  We first consider $\M_\gamma^1$.  Under the
  splitting \eqref{eq:bdryproduct} we define
  \begin{equation}
    \label{eq:01def}
    c_{\nu,e}(m,t):=(1-\tau_{\nu,e}(t))c_{\nu,e}(m,0) + \tau_{\nu,e}(t)
    c_{\nu,e}(m,\infty), \quad (m,t) \in \M_\gamma^{0,1} \times [0,\infty]
  \end{equation}
  where $\tau_{\nu,e}:[0,\infty] \to [0,1]$ is a diffeomorphism.  In
  \eqref{eq:01def}, $c_{\nu,e}$ satisfies the \hyperref[eq:taufactor]{(Marking independence)}
  property because on each connected component it
  factors through the projection $\M_\gamma^1 \to [0,\infty]$. Indeed
  $m \mapsto c_{\nu,e}(m,\infty)$ is locally constant on
  $\partial \M_\gamma \cap \M_\gamma^1$ by the \hyperref[eq:taufactor]{(Marking independence)}
  property for smaller strata, and
  $m \mapsto c_{\nu,e}(m,0)$ is locally constant on
  $\partial \M_\gamma \cap \M_\gamma^1$ by definition
  \eqref{eq:cgndef}.  We obtain uniform continuity on $c_{\nu,e}$ by
  requiring that the derivative of $\tau_{\nu,e}$ is bounded by
  $C_\nu^{-1}$ for all $\nu$. This can be arranged by allowing
  $d\tau_{\nu,e}$ to be supported in an interval of length $C_\nu$. In
  a similar way, the functions $c_{\nu,e_j}$ are extended to
  $\M_\gamma^i$, assuming that its value on the boundary
  $\partial \M_\gamma^i$ factors through the projection to
  $[0,\infty]^i$.
\end{proof}

\begin{proposition} {\rm(Convergence)}
  \label{prop:defconv}
  Let ${\etasp} \in \t$ be a generic cone direction, and let
  $\{\ul \eta_\nu\}_\nu$ be a coherent sequence of deformation data
  compatible with ${\etasp}$ (as in Definition \ref{def:compdef}).
  Let $u_\nu : C_\nu \to \XX_\cP$ be a sequence of
  $\ul \eta_\nu$-deformed maps with uniformly bounded area. Then, a
  subsequence converges to a framed split map $u_\infty:C \to \XX_\cP$
  of type $(\tGam_\infty,\Gamma)$. The limit map is unique up to the
  action of the identity component of the framed tropical symmetry
  group $T_{\trop,\fr}(\tGam_\infty,\Gamma)$ (see \eqref{eq:tfr}).  If
  the maps $u_\nu$ have index $0$, and the perturbation datum for the
  limit is regular, then, the split tropical graph $\tGam_\infty$ is
  rigid.
\end{proposition}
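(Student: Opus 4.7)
The plan is to follow the Gromov compactness strategy for broken maps from the prequel and adapt it to account for the deformed matching condition at split edges. First I would extract a subsequential Gromov limit by invoking the area bound: after passing to a subsequence, the domains $C_\nu$ converge to a stable treed disk $C$ of some type $(\tGam_\infty, \Gamma)$, and on each vertex component $v \in \Ver(\tGam_\infty)$ there exist translations $t_\nu(v) \in \t_{P(v)}$ satisfying the relative direction condition at non-split edges (Definition \ref{def:relweight}), such that $e^{-t_\nu(v)} u_{\nu, \kappa(v)}$ converges in $C^\infty_{\loc}$ on the complement of the nodes to a holomorphic map $u_{\infty, v} : C_v \to \XX_{P(v)}$. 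Framings and matching conditions on non-split edges $e \in \Edge(\tGam_\infty)\setminus \Edge_s(\Gamma)$ pass to the limit exactly as in the broken case, so $u_\infty$ satisfies the non-split portion of Definition \ref{def:split-map}, and framings at split edges can be arranged by choosing $t_\nu$ so that the induced tropical evaluations match, producing a framed split map in the sense of Definition \ref{def:fr-split}.

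Next I would analyze the split edges to verify the \hyperref[eq:cc]{(Cone condition)}. At a split edge $e = (v_+, v_-) \in \Edge_s(\Gamma)$ (which may be subdivided into a chain in $\tGam_\infty$), the deformed matching condition \eqref{eq:defmatch} with parameter $\eta_{\nu,e} = c_{\nu,e} \pi^\perp_{\cT(e)}(\etasp)$ translates, after accounting for the vertex translations, into the identity
\[
\Diff_e(t_\nu) = c_{\nu,e}\, \pi^\perp_{\cT(e)}(\etasp) + O(1),
\]
where the $O(1)$ correction is absorbed by readjusting $t_\nu$ within a bounded set. Since $t_\nu$ satisfies the (Direction) condition at non-split edges, we have $t_\nu \in \fw(\tGam_\infty, \Gamma)$ for all large $\nu$, and hence $(c_{\nu,e} \pi^\perp_{\cT(e)}(\etasp))_e \in \Disc(\tGam_\infty, \Gamma)$. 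By Definition \ref{def:compdef} the tuples $(c_{\nu,e})_e$ form an increasing sequence, so Lemma \ref{lem:spliteq} yields the \hyperref[eq:cc]{(Cone condition)}, proving $\tGam_\infty$ is a split tropical graph.

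For uniqueness, the translations $t_\nu$ (and the choice of framing) are determined only modulo tuples $(g_v, z_e)$ preserving both the pointwise limits and the framed matching condition on all interior edges, which is precisely the definition of the framed tropical symmetry group $T_{\trop,\fr}(\tGam_\infty, \Gamma)$. Discrete ambiguities correspond to the component group; the continuous ambiguity is exactly the identity component, giving the claimed uniqueness.

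Finally, for rigidity under the regularity hypothesis: by Remark \ref{rem:dim-split}, the expected dimension of the reduced moduli space containing $u_\infty$ is $i^\spl_\red(\tGam_\infty, \ul x) = i^\br(\hat\Gamma_\infty, \ul x)$. A Sard--Smale argument parallel to Proposition \ref{prop:pdtrans} shows that for a generic choice of perturbation, the space of index-$0$ deformed maps varies in a family of dimension equal to $\dim \t_\Gamma = |\Edge_s(\Gamma)|(\dim\t - 1)$ as the deformation parameter varies; after quotienting by the deformation direction, the limit $u_\infty$ lies in a $0$-dimensional reduced stratum. If $\tGam_\infty$ were not rigid, then by Proposition \ref{prop:discrdim} we would have the strict inequality $\dim \fw(\tGam_\infty, \Gamma) > |\Edge_s(\Gamma)|(\dim\t - 1)$, so $T_\trop(\tGam_\infty)$ would be of strictly larger dimension than expected, forcing $i^\spl_\red(\tGam_\infty, \ul x) < 0$ and contradicting regularity. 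The main obstacle is making rigorous the translation adjustments at the split nodes so that $\Diff_e(t_\nu)$ truly records the deformation parameter $\eta_{\nu,e}$; this requires a delicate exponential-decay argument on cylindrical ends, analogous to the soft rescaling used in the broken compactness theorem of the prequel.
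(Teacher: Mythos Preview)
Your overall strategy matches the paper's: reduce to a fixed type, apply Gromov compactness for broken maps on each connected component of $\tGam_\infty \setminus \Edge_s(\Gamma)$ to obtain translation sequences $t_\nu(v)$ and a componentwise limit, then analyze the split edges to verify the cone condition. The uniqueness and rigidity arguments are also in line with the paper.

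The gap is precisely at the step you flag as the ``main obstacle,'' and your proposed fix does not work. You write that the $O(1)$ discrepancy in
\[
\Diff_e(t_\nu) = c_{\nu,e}\,\pi^\perp_{\cT(e)}(\etasp) + O(1)
\]
can be ``absorbed by readjusting $t_\nu$ within a bounded set.'' But absorbing it means finding a bounded $\delta_\nu$ in the span of $\fw(\tGam_\infty,\Gamma)$ with $\Diff(\delta_\nu)$ equal to the error term. The image of $\Diff$ is exactly the span of the discrepancy cone $\Disc(\tGam_\infty,\Gamma)$, so this would require you to already know that the error (equivalently, $\eta_\nu$ itself) lies in that span---which is the cone condition you are trying to establish. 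The argument is circular as written. Nor is the resolution an exponential-decay estimate on cylindrical ends as you suggest at the end.

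The paper's resolution is algebraic rather than analytic: it runs an iterative ``subtract the fastest-growing sequence'' procedure simultaneously on the vertex sequences $\{t_\nu(v)\}$ and the edge sequences $\{\eta_\nu(e)\}$, with a tie-breaking rule that prefers edge sequences. Because the deformation data are an increasing tuple (distinct growth rates for distinct split edges), this rule forces all the edge sequences to vanish after finitely many steps, leaving only a bounded vertex remainder. Subtracting that remainder from the original $t_\nu$ produces $\ol t_\nu \in \fw(\tGam_\infty,\Gamma)$ with $\Diff(\ol t_\nu) = \eta_\nu$ \emph{exactly}, which simultaneously yields $\eta_\nu \in \Disc(\tGam_\infty,\Gamma)$ (hence the weak sequential cone condition via Lemma~\ref{lem:spliteq}) and guarantees that the retranslated limit $e^{-\ol t_\nu} u_\nu$ satisfies the matching condition at split edges, giving the framing there.
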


If the limit split tropical graph is rigid, the limit is unique,
because the framed symmetry group $T_{\trop,\fr}(\tGam_\infty,\Gamma)$
is finite. The following terminology is used in the proof:

\begin{definition}
  In a collection of sequences $\{t_\nu(i) \in \t\}_\nu$, $i=1,\dots,n$, we say $t(i)$ is the \em{fastest growing sequence} if for any $j \neq i$, $\lim_\nu \frac {|t_\nu(j)|}{|t_\nu(i)|}$ exists and is finite.
\end{definition}

\begin{proof}[Proof of Proposition \ref{prop:defconv}]  
  As in the convergence of broken maps, the first step is to find the
  component-wise limit map.  There are finitely many types of deformed
  maps that satisfy an area bound; the proof is the same as the
  analogous result Proposition T-\ref{T-prop:finno} for broken maps.
  Therefore, after passing to a subsequence, we can assume that the
  maps $u_\nu$ have a $\nu$-independent type $\tGam \to \Gamma$, with
  deformation datum
  \[ \eta_\nu(e):=\eta_{\tGam,\nu}([C_\nu],e) \in \t/\t_{\cT(e)},
    \quad \forall e \in \Edge_s(\Gamma).\]
  We apply Gromov convergence for broken maps (Theorem
  T-\ref{T-thm:cpt-broken}, outlined in Section \ref{subsec:conv})  on each
  connected component of $\tGam\bs \Edge_s(\Gamma)$.
  The result is a  translation sequence
  \[t_\nu(v) \in \Cone(\kappa,v), \quad v \in \Ver(\tGam_\infty)\]
  that satisfies the \hyperref[eq:slope-split]{(Direction)} condition for
  all non-split edges $e \in \Edge(\tGam_\infty) \bs \Edge_s(\Gamma)$,
  and a collection of limit maps denoted by $u_\infty$.  The
  limit $u_\infty$ is modelled on a quasi-split tropical graph
  $\tGam_\infty \to \Gamma$, equipped with a tropical edge collapse
  morphism  $\kappa : \tGam_\infty\bs \Edge_s(\Gam) \to \tGam \bs
  \Edge_s(\Gam)$.

  The limit map $u_\infty$ is a quasi-split map; for it to be a split
  map, it remains to show that the cone condition \eqref{eq:cc} is
  satisfied.  For this purpose, we refine the translation sequences
  $t_\nu$ to produce a sequence of relative
  translations satisfying the weak sequential cone condition of Lemma
  \ref{lem:spliteq}.
  For any 
  split edge $e$, the sequences
  \begin{equation}
    \label{eq:trseq}
    \pi^\perp_{\cT(e)}(t_\nu(v_+) - t_\nu(v_-))-\eta_\nu(e), \quad e=(v_+,v_-) \in \Edge_s(\Gamma) 
  \end{equation}
  are uniformly bounded: The proof is the same as the analogous
  statement for broken maps in T-\eqref{T-eq:tdiff}; it follows from
  the fact that the projected evaluations of $u_\nu$ at the nodal
  lifts $w_{\nu,e}^+$ and $w_{\nu,e}^-$ differ by $\eta_\nu$, and that
  the projected evaluation at $z_e^\pm$ translated by $t_\nu(v_\pm)$
  forms a converging sequence.  The next step is to adjust the
  translation sequences by a uniformly bounded amount so that the
  discrepancy at the split edges is equal to the deformation
  parameters, i.e.  the sequences in \eqref{eq:trseq}, vanish.  The
  adjustment to the translation sequences is by the iterative process
  used in Step 2 of the proof of Lemma T-\ref{T-lem:approxpoly}, and
  is as follows: The iteration is run simultaneously on the vertex
  sequences $\{t_\nu(v)\}_{v,\nu}$ and the edge sequences
  $\{\eta_\nu(e)\}_{e,\nu}$.  In each step a fastest growing sequence
  (which exists after passing to a subsequence) is subtracted, with
  the resulting sequence in the $i$-th step being denoted by
  $\{t^i_\nu(v)\}_v$, $\{\eta^i_\nu(e)\}_e$.  That is, if at the
  $i$-th step, $r_\nu^i$ is the fastest growing sequence, we define
  \[t_\nu^{i+1}(v):=t_\nu^i(v) - r_\nu^i \lim_\nu \tfrac {t_\nu^i(v)}{r_\nu^i},\]
  and $\eta_\nu^{i+1}(e)$ is defined similarly.  After any step, the
  quantity
  $\pi^\perp_{\cT(e)}(t_\nu^i(v_+)-t_\nu^i(v_-)) - \eta_\nu^i(e)$ is
  the same as the corresponding quantity in \eqref{eq:trseq} for any
  split edge $e=(v_+,v_-)$.  At any step, if both a vertex sequence
  and an edge sequence grow at the same rate, and are both fastest
  growing, then we break the tie by subtracting the edge sequence.
  The iteration ends when the sequences corresponding to vertices and
  split edges are uniformly bounded, say, after $k$ steps.  Our tie
  breaking method, together with the fact that the sequences
  corresponding to a pair of split edges grow at different rates 
  (since $(c_{e,\nu})_{e,\nu}$ is a sequence of increasing tuples),
  ensures that $\eta^k_\nu(e)=0$ for all split edges $e$ and all
  $\nu$.  We subtract the uniformly bounded final sequence
  $\{t^k_\nu(v)\}_{v,\nu}$ from the initial sequence
  $\{t_\nu(v)\}_{v,\nu}$ and denote the result by
  $\{\ol t_\nu(v)\}_{v,\nu}$, which satisfies
  \begin{equation}
    \label{eq:diffeta}
    \pi^\perp_{\cT(e)}(\ol t_\nu(v_+) - \ol t_\nu(v_-))-\eta_\nu(e)=0, \quad e=(v_+,v_-) \in \Edge_s(\Gamma).
  \end{equation}
  Since for any vertex $v$ the translation sequences $t_\nu(v)$,
  $\ol t_\nu(v)$ differ by a uniformly bounded amount, a subsequence
  of $e^{-\ol t_\nu}u_\nu$ converges to a limit $\ol u_{\infty,v}$.
  The limit map $\ol u_\infty:=(\ol u_{\infty,v})_v$ satisfies the
  matching condition on all edges, including split edges, because by
  \eqref{eq:diffeta} the translations $\ol t_\nu$ compensate for the
  deformation parameters. Finally $\tGam \to \Gam$ is a split graph: 
  By \eqref{eq:diffeta}, $\Diff(\ol t_\nu)=\eta_\nu$, and $\eta_\nu \in \Disc(\tGam,\Gam) 
    \quad\forall \nu$. Since the weak sequential condition is satisfied, Lemma \ref{lem:spliteq}           implies that $\Disc(\tGam,\Gam)$ satisfies the \hyperref[eq:cc]{(Cone condition)}. 

    We have shown that $\ol u_\infty$ is a framed split map. The proof
    of uniqueness of limits is the same as Step 6 in the proof of
    Theorem T-\ref{T-thm:cpt-breaking}.  The idea is that if there are
    different limits $u_\infty$, $u_\infty'$, the difference between
    the translation sequences is uniformly bounded, and the the limit
    of a subsequence of the differences yields an element in
    the identity component of the tropical symmetry group which relates
    $u_\infty$, $u_\infty'$. The last statement about the rigidity of
    the limit split map follows from a dimensional argument, see
    Remark \ref{rem:dim-split}.
\end{proof}

Gluing a rigid split map with a cone direction ${\etasp}$ yields a
sequence of deformed maps for any sequence of uniformly continuous
deformation data compatible with ${\etasp}$. We assume that the
perturbation datum for the deformed maps is obtained from that of the
split map by gluing on the necks.
\begin{remark}\label{rem:def-family}
  In Lemma \ref{lem:defseq}, we constructed a sequence of uniformly
  continuous deformation data $\{\ul \eta_\nu\}_{\nu \in \Z_+}$ that
  is compatible with a cone direction ${\etasp}$. By the same proof,
  we can also a construct a family of uniformly continuous deformation
  data $\{\ul \eta_\nu\}_{\nu \in \R_+}$ such that for any sequence
  $\nu_i \to \infty$, $\{\ul \eta_{\nu_i}\}_i$ is compatible with the
  cone direction ${\etasp}$.
\end{remark}
\begin{proposition} \label{prop:splitglue} {\rm(Gluing a split map)}
  Suppose $u$ is a regular rigid framed split map of type
  $(\tGam,\bGam)$ with a generic cone direction ${\etasp} \in \t$.
  Let $(\ul \eta_\nu)_{\nu \in \R_+}=\{\eta_{\tGam,\nu}\}_{\tGam,\nu}$
  be a family of uniformly continuous coherent deformation data
  compatible with ${\etasp}$ (as in Definition \ref{def:compdef}).
  Then, there exists $\nu_0$ such that for $\nu \geq \nu_0$
  \begin{enumerate}
  \item {\rm(Existence of glued family)} \label{part:splitglue-a}
    there is a regular rigid $\ul \eta_\nu$-deformed map $u_\nu$ such
    that the family $\{u_\nu\}_\nu$ of deformed maps converges to $u$
    as $\nu \to \infty$.
  \item {\rm(Surjectivity of gluing)} For any sequence of
    $\ul \eta_\nu$-deformed maps $u_\nu'$ that converges to the framed
    split map $u$, for large enough $\nu$ the map $u_\nu'$ is
    contained in the glued family constructed in
    \eqref{part:splitglue-a}.
  \end{enumerate}
\end{proposition}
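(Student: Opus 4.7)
The strategy is to translate each component of the rigid split map by an amount chosen so that the deformed matching condition is satisfied on the nose at every split edge, and then correct the remaining Cauchy--Riemann error by a standard Newton iteration across the tropical necks. Rigidity of $(\tGam,\bGam)$ together with Proposition \ref{prop:discrdim} gives
\[
\dim \fw(\tGam,\bGam) = |\Edge_s(\bGam)|(\dim\t-1) = \dim \Disc(\tGam,\bGam),
\]
so $\Diff: \fw(\tGam,\bGam) \to \Disc(\tGam,\bGam)$ is a bijection of top-dimensional rational cones. Compatibility of $\ul\eta_\nu$ with $\etasp$ combined with Lemma \ref{lem:spliteq} places the tuple $(\eta_{\tGam,\nu}([C],e))_{e \in \Edge_s(\bGam)}$ inside $\Disc(\tGam,\bGam)$ for all sufficiently large $\nu$, so there is a unique $\ol t_\nu = \ol t_\nu([C]) \in \fw(\tGam,\bGam)$ with $\Diff(\ol t_\nu) = (\eta_{\tGam,\nu}([C],e))_e$. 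Uniform continuity of the deformation data makes $\ol t_\nu$ continuous in $[C]$, which is crucial for producing a coherent family.

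Using these translations, construct the pregluing $u_\nu^{\pre}$ on a partially resolved domain $C_\nu^{\pre}$: replace each $u_v$ by its translate $e^{-\ol t_\nu(v)} u_v$, then cylindrically preglue across all tropical edges of $\tGam$ that are \emph{not} split edges of $\bGam$ and use the standard cutoff pregluing at internal nodes, while leaving the split nodes of $\bGam$ in place. The identity $\Diff(\ol t_\nu) = (\eta_{\tGam,\nu}([C],e))_e$ arranges that the deformed matching \eqref{eq:defmatch} holds exactly at every split node; at the other tropical nodes the matching holds to leading order with error decaying in an exponentially weighted Sobolev norm adapted to the lengthening necks. Regularity of $u$ together with the broken-map gluing analysis of Theorem T-\ref{T-thm:bdry-glue} then furnishes a right inverse $Q_\nu$ of the linearized $\olp$-operator at $u_\nu^{\pre}$ whose norm is bounded independently of $\nu$. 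The contraction mapping principle produces a unique small correction $\xi_\nu$ such that $u_\nu := \exp_{u_\nu^{\pre}}(\xi_\nu)$ is a genuine $\ul\eta_\nu$-deformed holomorphic map; its tropical type is $\bGam$, which is rigid, so $u_\nu$ is rigid, and regularity transfers from $u$ by the usual estimates.

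For surjectivity, let $u_\nu'$ be a sequence of $\ul\eta_\nu$-deformed maps with $u_\nu' \to u$ in the sense of Proposition \ref{prop:defconv}. The translation sequences extracted there differ from $\ol t_\nu$ by a uniformly bounded amount, and any subsequential limit of these differences lies in $T_{\trop,\fr}(\tGam,\bGam)$; rigidity forces this group to be finite, so up to its action the difference vanishes for large $\nu$. Hence $u_\nu'$ lies in an arbitrarily small $W^{1,p}$-neighborhood of $u_\nu^{\pre}$ and coincides with $u_\nu$ by the uniqueness clause of the contraction mapping.

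The main obstacle is the uniform invertibility estimate for $Q_\nu$: the translations $\ol t_\nu(v)$ tend to infinity, so the cylindrical neck regions where the gluing is performed lengthen without bound. One must work in exponentially weighted Sobolev spaces on each $\XX_P$ compatible with the $\t_P$-action, patch these across the necks via cutoffs, and show that the linearization at $u_\nu^{\pre}$ is sufficiently close to a direct sum of the component operators to inherit $\nu$-independent bounds. This is the technical heart of the broken-map gluing; the only new feature here, the deformed matching at split edges, has been absorbed into the choice of $\ol t_\nu$, so the nodal estimates at split edges reduce to those at ordinary tropical nodes.
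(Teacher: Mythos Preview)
Your overall strategy---translate the components by a relative vertex position dictated by the deformation parameter, preglue, then apply a Newton iteration---is exactly the paper's approach, and your use of rigidity to invert $\Diff$ is correct. However, there is a genuine error in which edges you propose to glue.

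A rigid $\ul\eta_\nu$-deformed map has base tropical graph $\Gamma$, and its domain retains nodes at \emph{all} edges of $\Gamma$, split and non-split alike; the deformation only changes the matching condition at split edges. Consequently the pregluing must resolve only the tropical edges of $\tGam$ that are collapsed by $\kappa:\tGam\to\Gamma$, i.e.\ the edges in $\Edge_\trop(\tGam)\setminus\Edge(\Gamma)$. Your instruction to ``cylindrically preglue across all tropical edges of $\tGam$ that are not split edges of $\bGam$'' also glues the non-split edges of $\Gamma$, producing a curve of the wrong combinatorial type. The correct target type is $(\tGam_d,\Gamma)$ where $\tGam_d$ is obtained from $\tGam$ by collapsing exactly $\Edge_\trop(\tGam)\setminus\Edge(\Gamma)$; the non-split nodes of $\Gamma$ remain, and the (undeformed) matching there is automatically preserved by the translation since $\cT_\nu(v_+)-\cT_\nu(v_-)\in\R\,\cT(e)$ on such edges by the \hyperref[eq:slope-split]{(Direction)} condition. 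Once this is fixed, the Fredholm setup should incorporate the deformed matching condition at \emph{all} tropical nodes of $\tGam_d$ (not just split ones) as part of the map $\F_\nu$, with the deformation parameter set to zero on non-split edges.

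Two smaller points: the relevant gluing analysis is Theorem T-\ref{T-thm:gluing}, not T-\ref{T-thm:bdry-glue} (the latter concerns boundary orientations). And in surjectivity, the paper's argument is more direct than yours: uniform continuity of $\ul\eta_\nu$ together with convergence of the treed edge lengths gives $|\eta_\nu-\eta_\nu'|\to 0$, whence $|\cT_\nu-t_\nu'|\to 0$ via the linear inverse $\Diff_\tGam^{-1}$; invoking the finite group $T_{\trop,\fr}$ is unnecessary here and does not by itself yield the required $W^{1,p}$-closeness.
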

\begin{proof}
  We first describe the type of the deformed maps in the glued family.
  Given that the type of the split map $u$ is
  $\kappa : \tGam \to \Gamma$, the type of the glued deformed maps is
  $(\tGam_d,\Gamma)$. That is, the base tropical graph $\Gamma$ stays
  the same, and $\tGam_d$ is obtained by tropically collapsing the
  edges $e \in \Edge_\trop(\tGam) \bs \Edge(\Gamma)$. Thus, the
  tropical edges in $\tGam_d$ are exactly the edges of the base
  tropical graph $\Gamma$, and the edge collapse map $\kappa$ factors
  as
  \[\tGam \xrightarrow{\kappa_0} \tGam_d \xrightarrow{\kappa_1}
    \bGam.\]
  The tree part of $\tGam_d$ is the same as that of $\tGam$. We will
  construct a family of deformed maps of type $\tGam_d \to \bGam$.

  The gluing construction is very similar to the gluing of broken maps
  in Theorem T-\ref{T-thm:gluing}, so we only point out the
  differences at each step of the proof.

  \vskip .1in \noindent \em{Step 1: Construction of an approximate
    solution:} An approximate solution is constructed using relative
  vertex positions corresponding to a deformation parameter.  The
  domain $C_\nu$ of the approximate solution is constructed by gluing
  some of the interior nodes in $C$, and therefore the edge lengths
  for treed segments $T_e$, $e \in \Edge_{\white,-}(\tGam)$ in $C_\nu$
  are the same as $C$. By the \hyperref[eq:taufactor]{(Marking independence)}
  property of
  deformation data, $\eta_{\nu,\tGam_d}(C)=\eta_{\nu,\tGam_d}(C_\nu)$,
  and we denote
  \[\eta_\nu:=\eta_{\nu,\tGam_d}(C) \in \t_\Gam,\]
  where we recall from \eqref{eq:taudef} that
  $\t_\Gam=\bigoplus_{e \in \Edge_s(\bGam)}\t/\t_{\cT(e)} $.  The
  sequence
  \[ (\eta_\nu(e))_{e \in \Edge_s(\tGam),\nu} \]
  is an increasing sequence of tuples (as in Definition
  \ref{def:inc-tup-e}). Therefore the \hyperref[eq:cc]{(Cone condition)} for the split
  tropical graph $\tGam$ implies that for large enough $\nu$,
  $\eta_\nu$ lies in the discrepancy cone of $\tGam$, and so, there is
  a relative translation $\cT_\nu \in \fw(\tGam,\Gamma)$ satisfying
  \begin{equation}
    \label{eq:t4eta}
    \pi_{\cT(e)}^\perp(\cT_\nu(v_+) - \cT_\nu(v_-))= \eta_\nu(e), \quad e=(v_+,v_-) \in
    \Edge_s(\Gam).  
  \end{equation}
  Since the split tropical graph $\tGam$ is rigid, the relative
  translation $\cT_\nu \in \fw(\tGam,\Gam)$ is uniquely determined by
  \eqref{eq:t4eta}. Indeed, if there is another solution $\cT'$ of
  \eqref{eq:t4eta}, the difference $\cT - \cT'$ satisfies the
  direction condition on all edges, including split edges, and
  therefore generates a non-trivial subgroup
  $\exp( ( \cT - \cT')(\cdot))$ in $T_{\trop,\fr}(\tGam)$
  contradicting the rigidity of $\tGam$. For future use, we point out
  that the inverse
  \begin{equation}
    \label{eq:diffinv}
    \Diff_\tGam^{-1}: \t_\Gam \to \fw^\pm(\tGam,\Gam) :=\R\bran{\fw(\tGam,\Gamma)},  
  \end{equation}
  which maps deformation parameters to elements in the $\R$-span of
  relative translations, is a well-defined linear map.  For any edge
  $e=(v_+,v_-)$ that is collapsed by $\tGam \to \tGam_d$, we can
  assign a length $l_\nu(e)>0$ satisfying
  \[\cT_\nu(v_+) - \cT_\nu(v_-)=l_\nu(e) \cT(e). \]

  We now describe the domain and target spaces for the approximate
  solution.  The domain is a nodal curve $C_\nu$ of type $\tGam_d$
  which is obtained from $C$ by replacing each node corresponding to
  an edge $e \in \Edge_\trop(\tGam) \bs \Edge_\trop(\tGam_d)$ by a
  neck of length $l_\nu(e)$.  A component of the glued curve
  corresponds to a vertex $v$ of $\tGam_d$. The map
  $u|C_{\kappa_0^{-1}(v)}$ is a broken map with relative marked
  points, whose pieces map to $\ol \XC_Q$, $Q \subseteq P(v)$. Our
  goal is to glue at the nodes of $u|C_{\kappa_0^{-1}(v)}$ to produce
  a curve lying in $\ol \XC_{P(v)}$.

  We extend the definition of the deformation parameter to all
  tropical edges of $\tGam_d$ by defining
  \[\eta_\nu(e)=0 \quad \forall e \in \Edge_\trop(\tGam_d) \bs
    \Edge_s(\bGam).\]
  Thus by definition a $\eta_\nu$-deformed map satisfies the edge
  matching condition for non-split edges of $\bGam$.

  Next, we construct the approximate solution which will be shown to be
  $\eta_\nu$-deformed. Since $u$ is a framed split map, it satisfies a
  matching condition on the split edges: For any edge $e=(v_+,v_-)$ in
  $\tGam$, and domain coordinates in the neighborhood of $w_e^\pm$
  that respect the framing, the tropical evaluation maps
  (as in \eqref{eq:nodematch-intro})
  satisfy
  \begin{equation}
    \label{eq:spm}
    \ev_{w^+_e}^{\cT(e)}(u_{v_+})=\ev_{w^-_e}^{\cT(e)}(u_{v_-}). 
  \end{equation}
  For a vertex $v$ in $\tGam_d$, and a vertex $v'$ in the collapsed
  graph $\kappa_0^{-1}(v)$ (so that $P(v) \subseteq P(v')$), the translation $\cT_\nu(v')$ gives an
  identification 
  \[(e^{-\cT_\nu(v')})^{-1} : \XX_{P(v')} \dashrightarrow \XX_{P(v)},\]
  given by the inverse map of \eqref{eq:pqtrans}. If
  $P(v') \neq P(v)$, the above inverse is well-defined on an open set
  of the domain that contains the image of $u_{v'}$, and maps it to the
  $P(v')$-cylindrical end of $\XX_{P(v)}$.  The translated map
  \begin{equation} \label{eq:uvtr}
    u_{v',\eta_\nu}:=e^{\cT_\nu(v')}u_{v'}:C_{v'}^\circ \to \XB_{\ol
      P(v)}
  \end{equation}
  is well-defined on the complement of nodal points on $C_{v'}$.  For
  any uncollapsed tropical edge $e=(v_+,v_-)$ in $\tGam \to \tGam_d$,
  the matching condition \eqref{eq:spm} and the translation in
  \eqref{eq:uvtr} together imply a deformed matching condition
  \begin{equation}\label{eq:tvdef}
    \ev_{w^+_e}^{\cT(e)}(u_{v_+,\eta_\nu})=e^{\eta_\nu(e)}\ev_{w^-_e}^{\cT(e)}(u_{v_-,\eta_\nu}),    
  \end{equation} 
  because $\eta_\nu(e)=\cT_\nu(v_+)-\cT_\nu(v_-)$.  As in the proof of
  Theorem T-\ref{T-thm:gluing}, for each vertex $v$, translated maps
  $(u_{v',\eta_\nu})_{v' \in \kappa_0^{-1}(v)}$ can be glued at the
  nodal points to yield an approximate solution for the holomorphic
  curve equation which is denoted by
  \[u^{\pre}_\nu=(u^{\pre}_{v,\eta_\nu})_{v \in \Ver(\tGam_d)}, \quad
    u^{\pre}_{v,\eta_\nu}:C_{\eta_\nu,v} \to \ol \XC_{P(v)}.\]
  Since the patching does not alter the maps $u_{v,\eta_\nu}$ away
  from the collapsed nodes of $\tGam \to \tGam_d$, we conclude by
  \eqref{eq:tvdef} that the pre-glued map $u_\nu^{\pre}$ is
  $\eta_\nu$-deformed.

  \vskip .1in \noindent \em{Step 2: Fredholm theory:} The Sobolev
  norms carry over entirely from the proof of Theorem
  T-\ref{T-thm:gluing}. On the curves $C_\nu$, we use Sobolev weights
  on both neck regions created by gluing, and on nodal points
  corresponding to interior edges in $\tGam_d$. The map $\F_\nu$ in
  Theorem T-\ref{T-thm:gluing} incorporated the holomorphicity
  condition, marked points mapping to divisors and matching at disk
  nodes. Now we additionally require a deformed matching condition on
  interior nodes, given by
  \[\ev_e(u) \in \Delta_e \subset (\ol \XC_{P(e)})^2, \quad e
    \in \Edge_{\trop}(\tGam_d),\]
  where $\Delta_e$ is the diagonal and $\ev_e$ is the tropical
  evaluation map on the nodal lifts twisted by the deformation
  parameter, that is,
  \begin{multline*}
    \ev_e:\tM_{\deform,\tGam_d} \times \Map(C_\nu,\XX)_{1,p,\lam} \to (\XC_{P(e)})^2,\\
    (m,u) \mapsto (\ev^{\cT(e)}_{w_+(e)}(u_{v_+})),
    \exp(\eta_{\nu,\tGam_d}(m,e))(\ev^{\cT(e)}_{w_-(e)}(u_{v_-})).
  \end{multline*}
  Incorporating these conditions we obtain a map
  \[\F_\nu: \tM_{\deform, \tGam_d}^i \times \Omega^0(C_\nu,
    (u_\nu^{\pre})^* T\XX)_{1,p} \to \Omega^{0,1}(C_\nu,
    (u_\nu^{\pre})^* T\XX)_{0,p} \oplus \ev_\tGam^* T
    \XX(\tGam_d)/\Delta(\tGam_d)\]
  whose zeros correspond to $\eta_\nu$-deformed pseudoholomorphic maps
  near the approximate solution $u_\nu^{\pre}$, and where the
  notations $\XX(\tGam_d)$ and $\Delta(\tGam_d)$ are defined in
  \eqref{T-xgam}.

  \vskip .1in \noindent \em{Step 3: Error estimate:} The error
  estimate for the approximate solution is produced in the same way as
  Theorem T-\ref{T-thm:gluing}. Indeed the only contribution to the
  error estimate is from the failure of holomorphicity, as the
  approximate solution satisfies the matching conditions at boundary
  and interior nodes.

  The next few steps of the proof, namely, the construction of a
  uniformly bounded right inverse, the proof of quadratic estimates,
  and the application of Picard's Lemma are the same as in Theorem
  T-\ref{T-thm:gluing}.

  \vskip .1in \noindent \em{Step 7: Surjectivity of gluing}: Compared
  to the corresponding step in Theorem T-\ref{T-thm:gluing}, in the
  gluing of a split map, one additionally has to deal with
  domain-dependent deformation parameters.  Consider a sequence
  $u_\nu':C_\nu' \to \XX$ of $\ul \eta_\nu$-deformed maps that
  converges to the split map $u$. To prove that the maps $u_\nu'$ lie
  in the image of the gluing map of $u$, it is enough to show that
  $u_\nu'$ is close enough to the pre-glued map
  $u_\nu^{pre}:C_\nu \to \XX$. Indeed, similar to the proof of Theorem
  T-\ref{T-thm:gluing}, one of the conclusions of Picard's Lemma is
  that the glued map $u_\nu$ is the unique $\ul \eta_\nu$-deformed map
  in an $\eps$-neighborhood of the pre-glued map $u_\nu^{pre}$, and
  $\eps$ is independent of $\nu$. The sequences of domains
  $(C_\nu)_\nu$ and $(C_\nu')_\nu$ converge to $C$ in the compactified
  moduli space $\ol \tM_{\deform,\tGam_d}$, and so, the edge lengths
  $\ell(C_\nu,T_e)$, $\ell(C_\nu',T_e)$ of the treed segments converge
  to $\ell(C,T_e)$ for all boundary edges
  $e \in \Edge_{\white,-}(\tGam_d)$.  The edge length $\ell(C,T_e)$ in
  the split map $u$ is finite by the rigidity of $u$.  Therefore the
  differences converge
  \[\abs{\ell(C_\nu',T_e)-\ell(C_\nu,T_e)} \to 0.\]
  The deformation parameters for $u_\nu'$ are
  $\eta_\nu':=\eta_{\tGam,\nu}(C'_\nu)$. On a fixed stratum $\tGam_d$
  of curves, the deformation parameter depends only on the lengths of
  the treed edges by the \hyperref[eq:taufactor]{(Marking independence)} property.
  Together with the
  uniform continuity of the deformation datum, we conclude
  $\abs{\eta_\nu-\eta_\nu'} \to 0$.
  By the proof of convergence of deformed maps, the translation
  sequence $\{t_\nu'(v)\}_{\nu,v}$ for the convergence of
  $\{u_\nu'\}_\nu$ can be chosen to satisfy
  \[t_\nu(v_+)-t_\nu(v_-)=\eta_\nu'(e), \quad \forall e=(v_+,v_-), \]
  see \eqref{eq:diffeta}.  The inverse $\Diff_\tGam^{-1}$ is a
  well-defined map that maps
  \[\eta_\nu \mapsto \cT_\nu, \quad \eta_\nu' \mapsto t_\nu.\]
  Hence $\cT_\nu - t_\nu \to 0$. as in Theorem T-\ref{T-thm:gluing}
  the closeness of the relative translations $\cT_\nu$, $t_\nu$
  implies that the difference between the gluing parameters of
  $C_\nu$, $C_\nu'$ (that is, the gluing parameters used to construct
  $C_\nu$, $C_\nu'$ from $C$) converge to $0$. The rest of the proof
  is the same as the proof of Theorem T-\ref{T-thm:gluing}.
\end{proof}

 The following is an analogue of Proposition T-\ref{T-prop:breakingpert} and the proof is analogous.
\begin{proposition}\label{prop:defmorph}
  Let ${\etasp} \in \t$ be a generic
  cone direction and let $\{\ul \eta_\nu : \nu \in \R_{\geq 0}\}$ be a family of
  uniformly continuous coherent deformation data compatible with
  ${\etasp}$. Let $\Pe^\infty$ be a regular perturbation datum for split maps on
  $\XX$.
 For any
 $E_0>0$, there exists $\nu_0(E_0)$ such that the following holds.
   \begin{enumerate}
   \item \label{part:bij-split}
     {\rm(Bijection of moduli spaces)}
     For any $\nu \in \Z_{>0}$, there exist regular perturbation data $\ul \Pe^\nu$
     $\ul \eta_\nu$-deformed maps such that $\nu \geq \nu_0$
  there is a bijection between the moduli space of rigid framed split
  disks $\tM^\spl_{\fr}(\ul \Pe^\infty, {\etasp})_{<E_0}$ and the moduli space
  $\tM_{\deform}(\ul \Pe,\ul \eta_\nu)_{<E_0}$ of rigid
  $\ul \eta_\nu$-deformed disks of area less than $E_0$.
\item \label{prop:defmorph2}
   For any $\nu \in \Z_{>0}$, there exists a regular 
  perturbation morphism $\Pe^{\nu,\nu+1}$ extending $\Pe^\nu$ and
  $\Pe^{\nu+1}$ such that for all $E_0>0$ and $\nu \geq \nu_0(E_0)$,
  the \ainfty morphisms
  \[\phi_\nu^{\nu+1}: CF_\deform(L,\ul \Pe^\nu,\ul \eta_\nu) \to
    CF_\deform(L,\ul \Pe^{\nu+1},\ul \eta_{\nu+1}),\]
  \[\psi_{\nu+1}^\nu: CF_\deform(L,\ul
    \Pe^{\nu+1},\ul \eta_{\nu+1}) \to CF_\deform(L,\ul \Pe^\nu,\ul
    \eta_\nu)\]
 induced by $\Pe^{\nu,\nu+1}$
 are 
 identity modulo $q^{E_0}$, and are homotopy equivalences. 
 Furthermore, the homotopy equivalence between 
$\phi_\nu^{\nu+1} \circ \psi_{\nu+1}^\nu$ and the identity,
and 
$ \psi_{\nu+1}^\nu \circ \phi_\nu^{\nu+1}$ and the identity
may be taken to be zero modulo $q^{E_0}$.
\end{enumerate}
\end{proposition}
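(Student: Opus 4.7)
The plan is to combine the convergence result (Proposition \ref{prop:defconv}) and the gluing result (Proposition \ref{prop:splitglue}) to identify rigid $\ul\eta_\nu$-deformed disks with rigid framed split disks in bounded area, and then run the standard continuation/quilt construction of Proposition \ref{prop:pd3} along the family $\{\ul\eta_\nu\}_{\nu \in \R_{\geq 0}}$ to produce the $A_\infty$ morphisms, taking advantage of the bijection of part \eqref{part:bij-split} to identify them with the identity modulo $q^{E_0}$.

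For part \eqref{part:bij-split}, first invoke Proposition \ref{prop:finsplit} to fix a finite list $\mathcal{L}_{<E_0}$ of rigid framed split tropical types of area less than $E_0$. For each type $(\tGam,\Gam) \in \mathcal{L}_{<E_0}$, apply Proposition \ref{prop:splitglue} to each element of the (finite, by regularity of $\Pe^\infty$) reduced moduli space $\M^{\spl}_{\fr,\tGam}(\Pe^\infty,L,\etasp)_0$ to obtain, for $\nu$ sufficiently large, a glued rigid $\ul\eta_\nu$-deformed disk together with its perturbation datum; taking the union over the finite list defines $\ul\Pe^\nu$ on the relevant stratum, and one extends it coherently (by a Sard--Smale argument as in Proposition \ref{prop:pdtrans}) to a regular coherent perturbation datum elsewhere. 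The surjectivity clause of Proposition \ref{prop:splitglue} together with Proposition \ref{prop:defconv} shows that, for $\nu \geq \nu_0(E_0)$, every rigid $\ul\eta_\nu$-deformed disk of area less than $E_0$ lies in the image of the gluing construction, giving the claimed bijection.

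For part \eqref{prop:defmorph2}, use the restriction of the family $\{\ul\eta_\nu\}_{\nu \in \R_{\geq 0}}$ to the interval $[\nu,\nu+1]$ as a path of coherent deformation data connecting $\ul\eta_\nu$ and $\ul\eta_{\nu+1}$; apply the deformed-quilt construction discussed before Proposition \ref{prop:pd3} to produce a regular perturbation morphism $\Pe^{\nu,\nu+1}$ extending $\ul\Pe^\nu$ and $\ul\Pe^{\nu+1}$. Counts of the resulting rigid quilted $\ul\eta$-deformed disks define a unital $A_\infty$ homotopy equivalence $\phi_\nu^{\nu+1}$ (and similarly $\psi_{\nu+1}^\nu$ using the reverse path), as in the proof of Proposition \ref{prop:pd3}. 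To identify these morphisms with the identity modulo $q^{E_0}$, observe that the only quilted disks contributing modulo $q^{E_0}$ have area less than $E_0$; for such disks, refining the finiteness argument (Proposition \ref{prop:finsplit}) to quilted types and applying the convergence-plus-gluing bijection of part \eqref{part:bij-split} uniformly along the path shows that the quilted moduli spaces reduce to the unquilted moduli spaces, so that $\phi_\nu^{\nu+1}$, $\psi_{\nu+1}^\nu$ and their compositions agree with the identity modulo $q^{E_0}$; the homotopies witnessing this can be taken to vanish modulo $q^{E_0}$ as well.

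The main obstacle is verifying coherence and uniformity of the gluing bijection along the continuous family $\{\ul\eta_\nu\}_{\nu \in \R_{\geq 0}}$ (and its quilted version), in particular ensuring that the threshold $\nu_0(E_0)$ in Proposition \ref{prop:splitglue} can be chosen uniformly across the finite list of types of area less than $E_0$ and also across all quilted types of total area less than $E_0$ arising for parameters in $[\nu,\nu+1]$. This requires the uniform continuity of the deformation data (used to control the dependence of the deformation parameter on the quilt parameter) together with compactness of the space of quilted domains of bounded energy, parallel to the argument in Proposition T-\ref{T-prop:breakingpert}.
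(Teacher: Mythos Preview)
Your overall strategy matches the paper's proof, which likewise cites Proposition T-\ref{T-prop:breakingpert} as the template and uses the convergence and gluing results (Propositions \ref{prop:defconv} and \ref{prop:splitglue}) for part \eqref{part:bij-split}, with $\ul\Pe^\nu$ obtained by gluing $\ul\Pe^\infty$ on the neck for low-area strata and extending by Proposition \ref{prop:pdtrans} elsewhere.

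For part \eqref{prop:defmorph2}, however, your argument for why $\phi_\nu^{\nu+1}$ is the identity modulo $q^{E_0}$ is imprecise and misses the key mechanism. You say the quilted moduli spaces ``reduce to the unquilted moduli spaces'' via the bijection of part \eqref{part:bij-split}, but that bijection is between unquilted deformed disks and unquilted split disks; it says nothing directly about quilted ones. The paper's argument is sharper and rests on two points you do not make explicit. First, the perturbation morphism $\Pe^{\nu,\nu+1}$ is not a generic regular extension as in Proposition \ref{prop:pd3}, but is constructed on low-area strata by gluing $\ul\Pe^\infty$ with neck-length parameter $\nu + \delta\circ d_\Gamma(z)$ depending on the distance to the quilting circle. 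Second, and crucially, a sequence of non-constant index-zero $\Pe^{\nu,\nu+1}$-holomorphic quilted deformed disks of area $\leq E_0$ with $\nu\to\infty$ would converge to a $\ul\Pe^\infty$-holomorphic framed split map which is \emph{unquilted} (the quilting parameter is absorbed in the limit) and therefore has index $-1$, contradicting the regularity of $\ul\Pe^\infty$. Hence for $\nu\geq\nu_0(E_0)$ the only low-area quilted disks are the constant ones with a single input, giving the identity. The same index argument handles the twice-quilted disks and forces the homotopies to vanish modulo $q^{E_0}$. Without this specific construction of $\Pe^{\nu,\nu+1}$ and the index-drop argument, a generic perturbation morphism would only give \emph{some} homotopy equivalence, not one equal to the identity modulo $q^{E_0}$.
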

\begin{proof}
  The proof is analogous to that of 
  Proposition T-\ref{T-prop:breakingpert}, with 
  the parameter $\nu$ playing the role of the neck length parameter. 
  
  Proposition \ref{prop:defmorph} \eqref{part:bij-split} is a
  consequence of convergence and gluing results for split maps, namely
  Proposition \ref{prop:defconv} and Proposition \ref{prop:splitglue}.
  For any integer $\nu$ for strata $\Gamma$ of low area, that is, the
  strata for which the bijection from the gluing result (Proposition
  \ref{prop:splitglue}) is applicable, $\Pe^\nu_\Gamma$ is the
  perturbation obtained from $\Pe^\infty_\Gamma$ by gluing on the
  neck. (In fact, note that such a perturbation is assumed in the
  hypothesis of the gluing result.)  The perturbation datum $\Pe^\nu$
  is extended to higher area strata by the transversality result for
  deformed maps (Proposition \ref{prop:pdtrans}).

  To define \ainfty morphisms, the 
  family of deformation data interpolating between $\ul \eta_\nu$ and
  $\ul \eta_{\nu+1}$ is taken to be the restriction of the family
  $\{\ul \eta_\mu : \mu \in \R_+\}$  to $\mu \in [\nu,\nu+1]$.  The perturbation morphism
  $\Pe^{\nu,\nu+1}$ is defined by gluing $\Pe^\infty$ on the neck for low
  area strata, with neck-length parameter $\nu + \delta \circ d_\Gamma(z)$ given by the distance to the quilting circle \eqref{eq:dist-seam}, 
  and extended in a regular coherent way to strata of
  higher area.  Here, `low area' is in a similar sense as 
  the previous paragraph, 
  and means that the low area strata of rigid
  $\ul \Pe^{\tau}$-holomorphic $\ul \eta_\nu$-deformed maps are bijective
  to corresponding strata of rigid framed split disks for any $\tau \in [\nu,\nu+1]$. 
  Following the arguments in the proof of Proposition
  T-\ref{T-prop:breakingpert}, we conclude that there are no
  non-constant $\Pe^{\nu,\nu+1}$-holomorphic deformed quilted disks
  with area at most $E_0$, and therefore,
  \[(\phi_\nu^{\nu+1})^1=\Id \mod q^{E_0}, \quad (\phi_\nu^{\nu+1})^k =0 \mod q^{E_0},
    \enspace k \neq 1. \] Here the identity term in $(\phi_\nu^{\nu+1})^1$ 
  counts constant quilted maps with a single input. The idea of the
  proof is that if there are $\Pe^{\nu,\nu+1}$-holomorphic quilted
  disks with area $\leq E_0$ for arbitrarily large $\nu$, then a
  sequence of such maps converge to a $\ul \Pe_\infty$-holomorphic
  framed split map; the limit map is unquilted and hence has index
  $-1$, which contradicts the regularity of $\ul \Pe_\infty$.
  Perturbations for twice-quilted are defined in the same way, that is, on low area strata the perturbation is  by gluing $\ul \Pe_\infty$ with neck length given by a version of the distance to the quilting circle  parameter from \cite[Definition
5.9]{cw:flips}. Similar arguments as in the quilted case show that the homotopies $\phi_\nu^{\nu+1} \circ \psi_{\nu+1}^\nu$ and the identity,
and 
$ \psi_{\nu+1}^\nu \circ \phi_\nu^{\nu+1}$ and the identity 
are zero modulo $q^{E_0}$.
\end{proof}

The following is a consequence of Proposition \ref{prop:defmorph} \eqref{part:bij-split}:

\begin{corollary}
  Let $CF_\deform(L,\ul \Pe^\nu,\ul \eta_\nu)$ and
  $CF_{\split}(L,{\etasp},\ul \Pe^\infty)$ be the deformed and split
  Fukaya algebras defined by the perturbation data
  $(\ul \Pe^\nu,\ul \eta_\nu)$, $(\ul \Pe^\infty ,\etasp)$ from
  Proposition \ref{prop:defmorph}.  The structure maps
  $(m^\nu_\deform)^d$ defining $CF_\deform(L,\ul \Pe^\nu,\ul \eta_\nu)$
  converge as $\nu \to \infty$ to the structure maps $m^d_\spl$
  defining $CF_{\split}(L,{\etasp},\ul \Pe^\infty)$, in the sense that
  for any $E_0$ there exists $\nu(E_0)$ so that $(m^\nu_\deform)^d$ agrees with
  $m^d_\spl$ up to terms divisible by $q^{E_0}$ for $\nu > \nu(E_0)$.
\end{corollary}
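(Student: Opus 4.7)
The plan is to reduce the convergence statement to the bijection of moduli spaces provided by Proposition \ref{prop:defmorph}\eqref{part:bij-split}, checking that the weights attached to corresponding maps agree and that high-area contributions are controlled by the Novikov valuation. First I would fix $d \geq 0$, $E_0 > 0$, and a tuple of inputs $\ul x$, and write the structure map as a sum over rigid combinatorial types:
\[
  (m^\nu_\deform)^d(\ul x) = \sum_{\tGam} \sum_{u \in \tM_{\deform,\tGam}(\ul\Pe^\nu,\ul\eta_\nu,\ul x)_0} w_s(u)\, x_0, \qquad
  m^d_\spl(\ul x) = \sum_{\tGam} \sum_{u \in \M^\spl_{\tGam,\red}(\ul\Pe^\infty,\etasp,\ul x)_0} \mult(\tGam)\, w_s(u)\, x_0.
\]
Since $w_s(u)$ is proportional to $q^{A(u)}$, the contribution of any map of area $\geq E_0$ lies in $q^{E_0}\Lam_{\geq 0}$. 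By Proposition \ref{prop:finsplit} and its analogue for deformed maps (Proposition \ref{prop:pdtrans}\eqref{part:pdtrans3}), only finitely many rigid types $\tGam$ carry maps of area $<E_0$, so after choosing $\nu \geq \nu_0(E_0)$ as in Proposition \ref{prop:defmorph} it suffices to match the finite-type truncations modulo $q^{E_0}$.

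Next, for each rigid type $\tGam$ of area $< E_0$, I would invoke Proposition \ref{prop:defmorph}\eqref{part:bij-split} to obtain a bijection $\tM^\spl_{\fr,\tGam}(\ul\Pe^\infty,\etasp)_{<E_0} \leftrightarrow \tM_{\deform,\tGam}(\ul\Pe^\nu,\ul\eta_\nu)_{<E_0}$. Under this bijection, corresponding maps have the same domain combinatorial type, the same boundary homology class (hence the same area $A(u)$ and holonomy $\Hol([\partial u])$, because these are discrete invariants preserved under the gluing/convergence correspondence of Propositions \ref{prop:defconv} and \ref{prop:splitglue}), and the same interior intersection data. The orientation sign $\eps(u)$ also matches, since, as noted in Definition \ref{def:tfa}, dropping the (complex) matching conditions at split edges does not alter the determinant line bundle, and the gluing construction identifies the orientations on the two sides of the bijection, paralleling the identification in Theorem T-\ref{T-thm:bdry-glue}. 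The combinatorial factors $(d_\black(\tGam)!)^{-1}(s(\tGam)!)^{-1}$ are functions of the type alone and so agree.

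Finally, I would convert the sum over framed split maps into a sum over the reduced moduli space. By Definition \ref{def:mod-split}, $\tM^\spl_{\fr,\tGam}/T_{\trop,\fr}(\tGam) = \M^\spl_{\tGam,\red}$, and by Definition \ref{def:fr-split} the group $T_{\trop,\fr}(\tGam)$ is finite of order $\mult(\tGam)$ for rigid $\tGam$, acting freely on the regular part of the moduli space. Hence
\[
  \sum_{u \in \tM^\spl_{\fr,\tGam}(\ul\Pe^\infty,\etasp)_{<E_0,0}} w_s(u) = \mult(\tGam) \sum_{u \in \M^\spl_{\tGam,\red}(\ul\Pe^\infty,\etasp)_{<E_0,0}} w_s(u),
\]
so that combining with the bijection yields
\[
  \sum_{u \in \tM_{\deform,\tGam}(\ul\Pe^\nu,\ul\eta_\nu)_{<E_0,0}} w_s(u) = \mult(\tGam) \sum_{u \in \M^\spl_{\tGam,\red}(\ul\Pe^\infty,\etasp)_{<E_0,0}} w_s(u).
\]
Summing over rigid types $\tGam$ and discarding $q^{E_0}$-divisible remainders gives $(m^\nu_\deform)^d \equiv m^d_\spl \pmod{q^{E_0}}$, as required.

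The main obstacle is verifying that the weight data (in particular the orientation sign $\eps(u)$ and the holonomy $\Hol([\partial u])$) are preserved under the moduli-space bijection. The holonomy and area match because the boundary classes are locally constant under the continuous family of deformed maps converging to the split map, while the orientation sign requires tracing through the gluing construction in Proposition \ref{prop:splitglue} and checking that the determinant line bundle isomorphism produced by Picard's lemma is compatible with the one implicit in the definition of $\eps$ for split maps; this is the analogue for the split/deformed correspondence of the orientation comparison carried out for broken maps in Theorem T-\ref{T-thm:bdry-glue}\eqref{T-part:orien2}, and goes through by the same argument since the dropped matching condition is complex-linear.
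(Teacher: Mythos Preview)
Your proposal is correct and is precisely the natural unpacking of what the paper leaves implicit: the paper states this corollary with no proof at all, merely flagging it as ``a consequence of Proposition~\ref{prop:defmorph}\eqref{part:bij-split}''. Your argument---truncate at area $E_0$, invoke the bijection between framed split disks and deformed disks, check that the weight $w_s(u)$ is preserved, and pass from framed split maps to the reduced moduli space via the $\mult(\tGam)$-fold cover---is exactly how one fills in the details.

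One small imprecision worth flagging: you write that under the bijection ``corresponding maps have the same domain combinatorial type'', but this is not literally true. A rigid framed split map of type $\tGam$ glues to a deformed map of type $\tGam_d$, where $\tGam_d$ is obtained from $\tGam$ by collapsing the tropical edges in $\Edge_\trop(\tGam)\setminus\Edge(\Gamma)$ (see the first step of the proof of Proposition~\ref{prop:splitglue}). What \emph{is} preserved are the base tropical graph $\Gamma$, the number of interior markings $d_\black$, the number of split edges $s(\tGam)$, the boundary homotopy class (hence area and holonomy), and the orientation sign; these are the only quantities entering $w_s(u)$, so your conclusion is unaffected.
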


The following result completes the proof of \ainfty homotopy
equivalence between deformed and split Fukaya algebras. Together, with
the fact that any two deformed Fukaya algebras are homotopy equivalent
(Proposition \ref{prop:pd3}), this finishes the proof of the main
result (Theorem \ref{thm:tfuk}) that broken Fukaya algebras are
\ainfty homotopy equivalent to split Fukaya algebras.

\begin{proposition} \label{prop:hequiv} {\rm(Homotopy equivalence)}
  Let ${\etasp}$ be a generic cone direction, and let $\ul \eta_\nu$
  be a compatible sequence of coherent deformation data that is
  uniformly continuous.  For any $\nu_0 \in \Z_+$, let
  $\ul \Pe_{\nu_0}$ be the perturbation defined in Proposition
  \ref{prop:defmorph} for $\ul \eta_\nu$-deformed maps.  Then, there
  are strictly unital convergent \ainfty morphisms
  \[\phi:CF_\deform(L, \ul \eta_{\nu_0}, \ul \Pe^{\nu_0}) \to
    CF_{\split}(L,{\etasp},\ul \Pe^\infty), \quad
    \psi:CF_{\split}(L,{\etasp},\ul \Pe^\infty) \to CF_\deform(L, \ul
    \eta_{\nu_0}, \ul \Pe_{\nu_0}). \]
  such that $\psi \circ \phi$ and $\phi \circ \psi$ are
  \ainfty-homotopy equivalent to identity.
\end{proposition}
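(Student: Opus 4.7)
The plan is to construct $\phi$ and $\psi$ as $q$-adic limits of telescoped compositions of the \ainfty morphisms $\phi_\nu^{\nu+1}$ and $\psi_{\nu+1}^\nu$ built in Proposition \ref{prop:defmorph}. The key input is Proposition \ref{prop:defmorph} itself, which tells us that (a) the structure maps of $CF_\deform(L,\ul\eta_\nu,\ul\Pe^\nu)$ agree with those of $CF_\split(L,\etasp,\ul\Pe^\infty)$ modulo $q^{E_0}$ for $\nu \ge \nu_0(E_0)$, by the bijection of low-area moduli spaces, and (b) the morphisms $\phi_\nu^{\nu+1}$ and $\psi_{\nu+1}^\nu$, together with the homotopies witnessing $\phi_\nu^{\nu+1}\circ\psi_{\nu+1}^\nu \sim \Id$ and $\psi_{\nu+1}^\nu \circ \phi_\nu^{\nu+1} \sim \Id$, are all trivial modulo $q^{E_0}$ for $\nu \ge \nu_0(E_0)$.

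Concretely, set
\[ \Phi_N := \phi_{N-1}^{N} \circ \phi_{N-2}^{N-1} \circ \cdots \circ \phi_{\nu_0}^{\nu_0+1} : CF_\deform(L,\ul\eta_{\nu_0},\ul\Pe^{\nu_0}) \to CF_\deform(L,\ul\eta_N,\ul\Pe^N). \]
For any $E_0 > 0$ and $N \ge \nu_0(E_0)$, the target $CF_\deform(L,\ul\eta_N,\ul\Pe^N)$ has the same underlying graded vector space as $CF_\split(L,\etasp,\ul\Pe^\infty)$ and identical structure maps modulo $q^{E_0}$ by Proposition \ref{prop:defmorph} \eqref{part:bij-split}. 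Moreover, for $N \ge \nu_0(E_0)$, $\phi_N^{N+1} \equiv \Id \pmod{q^{E_0}}$, so $\Phi_{N+1} \equiv \Phi_N \pmod{q^{E_0}}$. Hence $(\Phi_N)$ stabilizes $q$-adically and defines a strictly unital convergent \ainfty morphism
\[ \phi := \lim_{N \to \infty} \Phi_N : CF_\deform(L,\ul\eta_{\nu_0},\ul\Pe^{\nu_0}) \to CF_\split(L,\etasp,\ul\Pe^\infty), \]
where strict unitality is preserved since each $\phi_\nu^{\nu+1}$ is strictly unital. The morphism $\psi$ is defined analogously as the $q$-adic limit of the telescoped compositions $\Psi_N := \psi_{\nu_0+1}^{\nu_0} \circ \cdots \circ \psi_N^{N-1}$, where for $N \ge \nu_0(E_0)$ the source $CF_\deform(L,\ul\eta_N,\ul\Pe^N)$ agrees with $CF_\split(L,\etasp,\ul\Pe^\infty)$ modulo $q^{E_0}$.

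For the homotopies, Proposition \ref{prop:defmorph} \eqref{prop:defmorph2} provides \ainfty-homotopies $H_\nu$ and $K_\nu$ witnessing $\phi_\nu^{\nu+1}\circ\psi_{\nu+1}^\nu \sim \Id$ and $\psi_{\nu+1}^\nu\circ\phi_\nu^{\nu+1} \sim \Id$ that vanish modulo $q^{E_0}$ for $\nu \ge \nu_0(E_0)$. Using that homotopy equivalence of \ainfty algebras is an equivalence relation, one concatenates these telescopically to build an \ainfty-homotopy between $\Phi_N \circ \Psi_N$ and $\Id$ that is zero modulo $q^{E_0}$ for $N \ge \nu_0(E_0)$. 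Since $\Phi_N \circ \Psi_N \equiv \phi \circ \psi \pmod{q^{E_0}}$ for such $N$, the $q$-adic limit of these homotopies furnishes $\phi\circ\psi \sim \Id$; the argument for $\psi\circ\phi\sim\Id$ is symmetric.

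The principal technical obstacle is making precise the $q$-adic convergence of the infinite composition of \ainfty morphisms and homotopies, in particular that the limit is itself a convergent \ainfty morphism in the sense required. The saving feature is that since each composing morphism is the identity and each composing homotopy is zero modulo $q^{E_0}$ for sufficiently large $\nu$, the putatively infinite composition reduces modulo $q^{E_0}$ to a finite composition of \ainfty morphisms and homotopies, which is unambiguously defined by the standard combinatorial formulas. This reduces the convergence argument to routine $q$-adic bookkeeping of the same type already used in \cite{vw:trop} for the broken-to-unbroken homotopy equivalence; the only genuinely new ingredient is the identification modulo $q^{E_0}$ of $CF_\deform(L,\ul\eta_N,\ul\Pe^N)$ with $CF_\split(L,\etasp,\ul\Pe^\infty)$ from Proposition \ref{prop:defmorph} \eqref{part:bij-split}, which is what makes the target of the limit well-defined.
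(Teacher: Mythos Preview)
Your proposal is correct and follows essentially the same approach as the paper: define $\phi$ and $\psi$ as $q$-adic limits of the telescoped compositions $\phi_{\nu_0+n-1}^{\nu_0+n}\circ\cdots\circ\phi_{\nu_0}^{\nu_0+1}$ and their reverses, and pass to the limit in the homotopies using that everything stabilizes modulo $q^{E_0}$ by Proposition~\ref{prop:defmorph}. The paper's proof is in fact briefer than yours, simply noting that the argument is verbatim that of the corresponding result in \cite{vw:trop} (the broken-to-unbroken homotopy equivalence), which is exactly the reduction you identify in your final paragraph.
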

\begin{proof}
  The proof is verbatim the same as the proof of Proposition
  T-\ref{T-prop:unbreak-break}.  In particular, we define
  $\phi:=\lim_n \phi_n$, where $\phi_n$ is the composition
  \[ \phi_{n} := \phi_{\nu_0+n-1}^{\nu_0 + n} \circ \ldots \circ
    \phi_{\nu_0 + 1}^{\nu_0+2} \circ \phi_{\nu_0}^{\nu_0+1} :
    CF_\deform(L, \ul \eta_{\nu_0}, \ul{\Pe}^{\nu_0}) \to
    CF_\deform(L, \ul \eta_{\nu_0+n},\ul{\Pe}^{\nu_0+n }) \]
  of morphisms $\phi_\nu^{\nu+1}$ in Proposition \ref{prop:defmorph};
  and $\psi$ is defined similarly. Homotopies are also constructed as
  limits of homotopies $h_n$ from $\phi_n \circ \psi_n$ to $\Id$, and
  $g_n$ from $\psi_n \circ \phi_n$ to $\Id$. By Proposition
  \ref{prop:defmorph}, the homotopies stabilize and have limits $h$,
  $g$ that satisfy
  \[\phi \circ \psi - \Id = m_1(h),
    \quad \psi \circ \phi - \Id = m_1(g). \]
\end{proof}

\section{Unobstructedness and disk potentials}
\label{sec:apps}

In this section we apply the theory in the previous sections to show
that tropical tori, such as torus fibers in almost toric fibrations,
are weakly unobstructed, and compute the potentials of the split 
Fukaya algebras in some cases. 

\subsection{Unobstructedness  for tropical tori}
\label{sec:unobst} 
The unobstructedness result is for tropical moment fibers which we now define. 
\begin{definition}\label{def:trop-torus}
  Let $\XX$ be a broken manifold corresponding to a tropical
  Hamiltonian action $(X,\PP,\Phi)$.  The Lagrangian $L$ is called a
  \em{tropical moment fiber} if
  \begin{itemize}
  \item $\ol X_{P_0}$ is a toric manifold, and the restriction of the
    tropical moment map $\Phi:\ol X_{P_0} \to \t^\dual$ is an honest
    moment map,
  \item any torus-invariant divisor of $\ol X_{P_0}$ is a relative
    divisor, that is, the divisor is $X_Q$ for some polytope
    $Q \in \PP$ satisfying $Q \subset P_0$, and
  \item $L = \Phi^{-1}(\lambda)$ for a point $\lam$ in the interior of
    the polytope $P_0$,
  \end{itemize}
\end{definition}

\begin{example} Any fiber of an almost toric manifold is a tropical
  moment fiber. We recall from Symington \cite{sym:2to4} that a four-dimensional 
  \em{almost toric manifold} is a symplectic manifold with a
  Lagrangian fibration $\pi : X \to B$ with compact fibers, whose
  regular fibers are tori, such that any critical point $p$ of $\pi$
  is either a toric singularity or is an isolated \em{nodal singularity}. In the latter case, $p$ 
  has a Darboux neighborhood (with symplectic form $dx \wedge dy$) in which
  the projection $\pi$ is  
  \[\pi(x,y)=(x_1 y_1 + x_2 y_2 , x_1 y_2 - x_2 y_1).\]

  Given a Lagrangian torus $L \subset X$ that is a smooth fiber of
  $\Phi : X \to B$, there are many ways to construct polyhedral
  decompositions in which $L$ is a tropical moment fiber.  One such
  scheme is to construct a polyhedral decomposition $\PP$ that is a
  collection of single cuts intersecting each other orthogonally:
  Assuming $B \subset \t^\dual \simeq \R^2$, consider a
  rational basis $\{e_1,e_2\}$ of $\t$. For each $e_i$,
  consider a pair of parallel cuts $P_i$, $P_i'$ normal to $e_i$, such
  that the Lagrangian fiber $\Phi(L) \in B$ lies between $P_i$ and
  $P_i'$. See Figure \ref{fig:poly} for example.
  In the resulting decomposition of $B$, $L$ is a tropical
  fiber.  Such a decomposition $\PP$ has a cutting datum, as discussed
  in Example T-\ref{T-eg:singles}. 
\end{example}

\begin{figure}[ht]\begin{center} 
    \scalebox{.6}{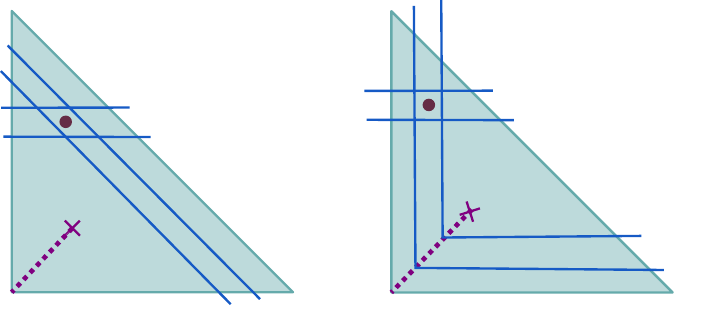} 
  \end{center}
  \caption{Polyhedral decompositions of an almost toric diagram}
  \label{fig:poly}
\end{figure}

We restate and prove Corollary \ref{cor:mc} stated in the
Introduction.  Weak unobstructedness of a Lagrangian brane is defined
in the Introduction.

\begin{proposition} \label{prop:mc-restate} {\rm(Unobstructedness of a
    tropical torus)} Suppose that for a  polyhedral decomposition $\PP$, 
  $L \subset X_{P_0} \subset \XX$ is a
  tropical torus as in Definition \ref{def:trop-torus}, and suppose
  $L$ is equipped with a brane structure.  For any generic cone
  direction ${\etasp} \in \t$, we have
  \[ m^0_{CF_{\split}(\XX,L,{\etasp})}=w x^{\blackt} \]
  where $x^{\blackt}$ is the unique maximum point of the Morse
  function on $L$ and $w\in \Lambda_{ > 0}$ is an element of the
  positive part of the Novikov ring.
    %
%
  In the split Fukaya algebra $CF_{\split}(\XX,L,{\etasp})$,
  \[ b:=w x^{\greyt} \in MC(L) \]
  is a solution of the Maurer-Cartan equation, and the potential of
  the $b$-deformed $A_\infty$ algebra $CF_{\split}(\XX,L,{\etasp}, b)$
  is
  \[W(b):=w.\]
  Consequently $CF_{\split}(\XX,L,{\etasp})$, and hence $CF(X,L)$, are
  weakly unobstructed.
\end{proposition}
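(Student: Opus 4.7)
The plan is to combine the factorization of split Fukaya algebra composition maps from Remark \ref{rem:factorize} with a Cho--Oh style disk classification on the toric component $\ol X_{P_0}$ and a Morse-theoretic dimension count to pin down the form of $m^0$. By Remark \ref{rem:factorize}, $m^0_\spl = \sum_{\tGam} m^0_{\spl,\tGam}$ where the sum is over rigid split tropical graphs with $d(\white) = 0$, and each term factorizes as a product over the connected components of $\tGam \setminus \Edge_s(\bGam)$. Exactly one such component $\tGam_0$ carries the disk vertex and the Lagrangian boundary data, and determines the output critical point; the remaining components $\tGam_1,\dots,\tGam_s$ contain only spheres in the (lower-dimensional) toric pieces and contribute numerical factors $m_{\tGam_i}$ that are independent of the Morse data.

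The next step is to pin down the output. Since $L$ is a tropical torus and $\ol X_{P_0}$ is a toric manifold all of whose torus-invariant divisors are relative divisors, every rigid disk-bearing component $u_0: C_0 \to \ol X_{P_0}$ is (by a Cho--Oh type analysis, with sphere bubbles in the neck pieces attached via the split edges at the toric divisors) a basic Maslov index two disk modulo the $T$-action. Its boundary evaluation on $L$ is $T$-equivariant and surjective, so in the reduced moduli $\M^\spl_{\tGam,\red}$ the boundary-evaluation map has generic dimension $n = \dim L$. For a rigid split configuration, the output boundary marked point must lie on the unstable manifold $W^u(x_0)$ of a critical point $x_0$ of $F$, of codimension $n - \mathrm{ind}(x_0)$ in $L$; cutting down to dimension zero forces $\mathrm{ind}(x_0) = n$, hence $x_0 = x^{\blackt}$. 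Since all contributing disks have strictly positive symplectic area, the resulting coefficient
\[
w \;=\; \sum_{\tGam} \mathrm{mult}(\tGam)\, \Bigl(\prod_{i \ge 1} m_{\tGam_i}\Bigr)\, w_s(u_{0,\tGam})
\]
lies in $\Lam_{>0}$, so $m^0_{CF_\spl(\XX,L,\etasp)} = w\, x^{\blackt}$.

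With this in hand the Maurer--Cartan calculation for $b = w x^{\greyt}$ follows from the homotopy unit construction \cite[(3.3.5.2)]{fooo} used to build $CF_\spl(\XX,L,\etasp)$: the generator $x^{\greyt}$ is set up so that $m^1(x^{\greyt}) = x^{\whitet} - x^{\blackt}$ (with $x^{\whitet} = 1_L$ a strict unit), and higher structure maps with multiple $x^{\greyt}$ inputs vanish by the axioms of the homotopy-unit extension. Consequently
\[
m^0 + \sum_{k \ge 1} m^k(b,\dots,b) \;=\; w\, x^{\blackt} + w\bigl(x^{\whitet} - x^{\blackt}\bigr) \;=\; w\cdot 1_L,
\]
so $b \in MC(L)$ with $W(b) = w$. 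Finally, weak unobstructedness transfers from $CF_\spl(\XX,L,\etasp)$ to $CF(X,L)$ via the $A_\infty$-homotopy equivalence from Theorem \ref{thm:tfuk} (together with Theorem \ref{thm:bfuk}): unital $A_\infty$-morphisms push Maurer--Cartan solutions to Maurer--Cartan solutions with the same potential, so the image $\F(b)$ weakly bounds $CF(X,L)$.

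The main obstacle is the part of Step 2/3 asserting that, in the presence of matching sphere bubbles in the neck pieces, the boundary evaluation $\mathrm{ev}_\partial: \M^\spl_{\tGam,\red} \to L$ on a rigid disk-bearing component still behaves like the $n$-dimensional Cho--Oh evaluation in the unbroken toric case. One must verify that the matching conditions at split edges, after passing to the reduced moduli, impose no further constraint on the boundary image on $L$: the framed tropical symmetry group $T_{\trop,\fr}(\tGam)$ must absorb precisely the sphere matching data, leaving a full $n$-dimensional family of boundary evaluations. This should follow from the splitting \eqref{eq:trop-split} of $T_\trop(\tGam)$, the rigidity formula \eqref{rightdim}, and the toric $T$-equivariance of the disk moduli in $\ol X_{P_0}$, but checking it carefully is the technical heart of the argument.
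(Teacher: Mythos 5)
Your overall strategy for the first half---use a torus action on the disk-bearing component to produce an $n$-dimensional family in the reduced moduli space, then force the output to be the maximum by a dimension count---is the same as the paper's. But two points need repair. First, you do not need, and should not assert, that every rigid disk-bearing component is ``a basic Maslov index two disk modulo the $T$-action'': the proposition covers all of $m^0$, including contributions of higher Maslov index (cf.\ Remark \ref{rem:higher-order-cs}), and no Cho--Oh classification is available or needed. The paper's argument is softer: with the perturbation datum chosen standard on $\ol X_{P_0}$ (possible because all torus-invariant divisors are relative, so the standard $J_0$ is already regular there), one acts by an arbitrary $t\in T$ on the components mapping to $\ol X_{P_0}$ only; since $T_{P_0}=\{\Id\}$ the tropical symmetry group acts trivially on $\ol X_{P_0}$, so the translates $u'_t$ lie in distinct $T_\trop$-orbits and give a $\dim(T)$-dimensional family of the output-forgotten configuration. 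This is exactly the resolution of what you flag as the ``main obstacle,'' so that issue is not open---but your write-up leaves it unresolved, and the equivariance-of-evaluation route you sketch is tied to the Maslov-two claim you cannot make.

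Second, and more seriously, the Maurer--Cartan step has a genuine gap. The identities $m^1_\spl(x^{\greyt}) = x^{\whitet} - x^{\blackt}$ (with no higher-order corrections) and $m^d_\spl(x^{\greyt},\dots,x^{\greyt})=0$ for $d\ge 2$ do \emph{not} follow ``by the axioms of the homotopy-unit extension'': in the FOOO construction these maps generically carry positive-area corrections, and killing them is a geometric statement. The paper proves it by a forgetful-map/index argument: if $u$ were a positive-area contribution to $m^1_\spl(x^{\greyt})$, then by locality and the standardness of the perturbation on $\ol X_{P_0}$ one may forget the weighted input to obtain a regular split disk $u'$ with no inputs whose index is two less than that of $u$, hence negative---a contradiction. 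Without this step your computation $m^0 + \sum_k m^k(b,\dots,b) = w\cdot 1_L$ is unjustified. Everything after that (transfer of weak unobstructedness along the unital $A_\infty$-homotopy equivalence of Theorem \ref{thm:tfuk} via \cite[Lemma 5.2]{cw:flips}) is fine.
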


\begin{proof} We prove unobstructedness for the split Fukaya algebra
  $CF_{\split}(\XX,L)$. The split Fukaya algebra is homotopy
  equivalent to the unbroken Fukaya algebra $CF(X,L)$ by Theorem
  T-\ref{T-thm:bfuk}.  Unobstructedness is preserved under homotopy
  equivalence, see \cite[Lemma 5.2]{cw:flips}.

  We first describe the perturbation datum for the moduli space of
  split maps.  Let $J_0$ be the standard almost complex structure on
  $\ol \XC_Q$ for all $Q \subseteq P_0$. Since torus-invariant
  divisors are relative divisors, Proposition
  T-\ref{T-prop:toricfibreg} implies that $J_0$-holomorphic disks and
  spheres in $\ol X_{P_0}$ are regular. Therefore, there is a coherent
  regular perturbation datum $\ul \Pe$ on $\XX$ for which the almost
  complex structure on $\ol X_{P_0}$ is standard.
  \label{page:unobs-proof}
  Next, we show that $m^0(1)$ is a multiple of the geometric unit
  $x^{\blackt}$. Consider a split disk $[u]$ of type $\tGam \to \bGam$
  contributing to $m_0(1)$, and whose boundary output asymptotes to
  $x_0 \in \cI(L)$.  Let
  \[\tGam_1 \subset \tGam \bs \Edge_s(\bGam)\]
  be the connected component containing the disk components, and let
  $u_1:=u|_{\tGam_1}$.  Let $u'$ be the split disk obtained by
  forgetting the boundary output leaf, and let $\tGam' \to \bGam$ be
  the type of $u'$.  For any torus element $t \in T$, the split map
  \[(u'_t)_v:=
    \begin{cases}
      t u'_v, \quad v \in \Ver(\tGam_1),\\
      u_v, \quad v \in \Ver(\tGam) \bs \Ver(\tGam_1).
    \end{cases}
  \]
  is not contained in the $T_\trop(\tGam,\bGam)$-orbit of $u'$.
  Indeed,
  the tropical symmetry group has a trivial action on $\ol X_{P_0}$,
  which contains the disk components.  By the same reason, for any
  pair $t_1 \neq t_2 \in T$, the maps $u_{t_1}'$, $u_{t_2}'$ lie in
  distinct $T_\trop(\tGam',\bGam)$-orbits. The regularity of $u$
  implies $u'_t$ is regular for all $t \in T$, Thus the tropical
  isomorphism classes $\{[u_t']:t \in T\}$ form a
  $\dim(T)$-dimensional family in the reduced moduli space, and 
  therefore
  \[\dim(\M^\spl_{\tGam',\red}(L)) \geq \dim(T).\]
  Since for the original split map $u$, the dimension of the reduced
  moduli space of split maps $\M^\spl_{\tGam,\red}(L)$ is zero, the
  output is necessarily the geometric unit $x^{\blackt}$. So,
  $m_0(1) = W x^{\blackt}$ for some 
  $W \in \Lambda_{> 0}$.

  The existence of a solution to the projective Maurer-Cartan equation
  now follows: We first claim that $m^1_\spl(x^{\greyt})$ only has zero
  order terms. If not, let $u$ be a split map with non-zero area
  contributing to $m^1_\spl(x^{\greyt})$. By the locality axiom, and the
  standardness of the perturbation datum on $\ol X_{P_0}$, we conclude
  that forgetting the input in $u$ produces a regular split disk $u'$
  with no inputs.  Since the index of $u'$ can not be negative, we
  conclude $u$ can not exist for dimension reasons. Indeed, the index
  of $u$ is two more than the index of $u'$ : one from the choice of a
  boundary incoming marking, and one from the weight on the incoming
  leaf. Therefore,
  \[ m^1_\spl(x^{\greyt}) = x^{\whitet} - x^{\blackt} . \]
  By a similar argument,
  \[m^d_\spl(x^{\greyt},\dots,x^{\greyt})=0 \quad d \geq 2,\]
  and consequently, $Wx^{\greyt}$ is a solution of the projective
  Maurer-Cartan equation.
\end{proof}

 \subsection{Disk potentials for torics}\label{sec:diskpot}
 In this section, we show that a toric Lagrangian brane in a toric
 symplectic manifold is unobstructed, 
 reproving a result of Fukaya-Oh-Ohta-Ono \cite{fooo:toric1}.  We also
 show that there is a solution of the
 Maurer-Cartan equation for which the leading order terms of the
 potential are the same as the leading order terms in the
 Batyrev-Givental potential (Definition \ref{def:bg}).

We consider a multiple cut 
on the toric manifold 
consisting of a
 collection of orthogonally intersecting single cuts, with one cut
 parallel to each facet of the moment polytope.  
 Let $X$ be 
 a $T$-toric variety with moment map $\Phi: X \to \t^\dual$ and
 moment polytope
 \begin{equation}
   \label{eq:XPhi}
   \Delta:=\Phi(X)=\{x \in \t^\dual: \bran{\mu_i,x} \leq c_i,
   i=1,\dots,N\}, 
 \end{equation}
 where $\mu_i \in \t$ is the primitive outward pointing normal of the
 $i$-th facet of $\Delta$, and $c_i \in \R$.  The polyhedral
 decomposition $\PP$ is defined by single cuts along the hypersurfaces
 \begin{equation}
   \label{eq:cutxx}
   \bran{x,\mu_i} = c_i -\eps_i, \quad i=1,\dots,N,
 \end{equation}
 where $\eps_i>0$ is a small constant. See Figure \ref{fig:toric-cut}.
 Thus one of the cut spaces
 \begin{equation*}
   X_{P_0}:=\Phinv(\{x : \bran{\mu_i,x} \leq c_i -\eps_i,
   i=1,\dots,N\})/\sim, 
 \end{equation*}
 where $\sim$ quotients boundaries by $S^1$-actions, is diffeomorphic
 to $X$, and for any $i$
 \[\{x \in \Phi(X) : \bran{\mu_i,x} \geq c_i -\eps_i\}/\sim\]
 is a $\P^1$-fibration.  For a given Lagrangian moment fiber
 $L=\Phinv(\lam)$, $\lam \in \on{interior}(\Phi(X))$, we assume the
 constants $\eps_i$ are small enough that the Lagrangian torus lies in
 $X_{P_0}$. That is, $\bran{\lam,\mu_i} \leq c_i-\eps_i$ for all
 $i$. Torus-invariant divisors of $\ol X_{P_0}$ are quotients of
 $\Phinv(\{x | \bran{x,\mu_i} = c_i -\eps_i \})$, and these are
 relative divisors; therefore $L$ is a tropical torus. Therefore, by
 Proposition \ref{prop:mc-restate}, the Lagrangian $\Phinv(\lam)$ is
 weakly unobstructed for any brane structure, which is the content of Proposition
 \ref{prop:blas} \eqref{part:blas1}. 
 Proposition \ref{prop:mc-restate} also gives a distinguished solution
 $b$ of the Maurer-Cartan equation. In Proposition
 \ref{prop:blas} \eqref{part:blas2},  we show that the leading order terms of the
 potential $W(b)$ agree with the leading order terms of the Batyrev-Givental potential, which we describe next.  The ``leading order
 term'' of a polynomial $W \in \Lam_{>0}$ refers to the non-zero terms
 with the least exponent of $q$.

 \begin{figure}[ht]
   \centering \scalebox{.8}{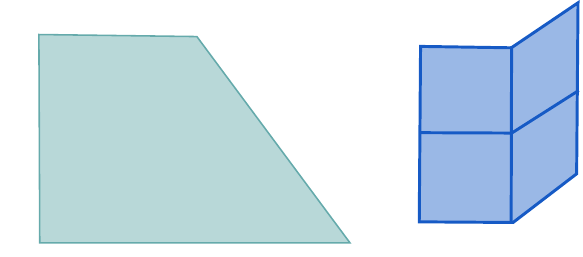}
   \caption{The multiple cut and its dual complex on a symplectic
     toric manifold.}
   \label{fig:toric-cut}
 \end{figure}

 The terms of the Batyrev-Givental potential (\cite{bat:qcr},
 \cite{giv:hom}) of a toric Lagrangian in a toric manifold correspond
 to index two Maslov disks that are projections of Blaschke disks in
 $\C^N$.  Here, we view $X$ as a git quotient $\C^N/G$ for a complex
 torus $G$, and use the classification of disks in $(X,L)$ given by
 Cho-Oh in \cite[Theorem 5.3]{chooh:toric}.  By this classification, there is a
 unique Blaschke disk of Maslov index two that intersects a given
 torus-invariant divisor, and which satisfies a point constraint on
 the boundary.
 
 \begin{definition} {\rm(Batyrev-Givental potential)}
   \label{def:bg}
   Suppose $X$, $\Phi$ are as in \eqref{eq:XPhi} above.  For any
   $\lam$ in the interior of $\Phi(X)$, the \em{Batyrev-Givental
     potential} for the Lagrangian $L:=\Phinv(\lam)$ is given by
   \begin{equation} \label{eq:bgpot}
     W_{BG}(y_1,\dots,y_n)=\sum_{i=1}^N \left(\prod_j y_j^{\mu_{i,j}}
     \right) q^{c_i-\bran{\lam,\mu_i}},\end{equation}
   where $\mu_i=(\mu_{i,1},\dots,\mu_{i,n})$ is the primitive outward
   normal of the $i$-th facet of the moment polytope.
 \end{definition}


We reprove the following result of Fukaya-Oh-Ohta-Ono \cite{fooo:toric1}:
\begin{proposition}
  {\rm(Unobstructedness and disk potential)}
  \label{prop:blas}
   Suppose $X$ is a compact symplectic toric manifold with
   an action of a torus $T$,
   and moment map
   $\Phi:X \to \t^\dual$. Let $L \subset X$ be a 
   Lagrangian moment fiber with a brane structure.
   \begin{enumerate}
   \item \label{part:blas1} The Lagrangian brane $L \subset X$ is weakly unobstructed.
   \item \label{part:blas2}
     There is a Maurer-Cartan element $b \in MC(CF(X,L))$ for which the leading order terms of the
     potential $W(b)$ coincide with the
  leading order terms of the Batyrev-Givental potential.
   \end{enumerate}
\end{proposition}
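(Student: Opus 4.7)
The plan is to apply the split Fukaya algebra machinery in the polyhedral decomposition $\PP$ given by the cuts \eqref{eq:cutxx}, which exhibits $L$ as a tropical torus, and then to extract the leading-order potential directly from Proposition \ref{prop:mc-restate}.

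For part \eqref{part:blas1}, I would first verify that the hypotheses of Definition \ref{def:trop-torus} are met: the cut space $\ol X_{P_0}$ is a toric variety whose moment polytope is the inward shift of $\Delta$ by the $\eps_i$, and every torus-invariant divisor of $\ol X_{P_0}$ is a relative divisor since it is the image of one of the cut hypersurfaces $\bran{x,\mu_i}=c_i-\eps_i$. Hence $L = \Phinv(\lam)$ is a tropical torus, and Proposition \ref{prop:mc-restate} directly supplies a Maurer-Cartan element $b = w\, x^{\greyt} \in MC(L)$ with $W(b) = w \in \Lam_{>0}$. Weak unobstructedness of $CF(X,L)$ then follows from the $A_\infty$ homotopy equivalences $CF(X,L) \simeq CF_\br(\XX,L) \simeq CF_\spl(\XX,L,\etasp)$ (Theorems \ref{thm:bfuk} and \ref{thm:tfuk}) and the invariance of weak unobstructedness under $A_\infty$ homotopy equivalence.

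For part \eqref{part:blas2}, I would identify the lowest $q$-valuation terms of $w$ by enumerating the rigid split tropical graphs of minimal symplectic area contributing to the coefficient of $x^{\blackt}$ in $m^0_\spl(1)$, then match each to a Blaschke disk via Cho-Oh's classification \cite{chooh:toric}. For each facet $i$ of $\Delta$, the expected minimal configuration is a split tropical graph $\tGam_i$ whose sole disk component is a Maslov-two Blaschke disk in $\ol X_{P_0}$ through the boundary marked point hitting the $i$-th relative divisor, together with a chain of sphere components propagating through the slab piece $\ol \XX_{P_i}$ (and possibly adjacent corner pieces) to the original $i$-th facet of $\Delta$---the additional sphere vertices supplying exactly the freedom in $\fw(\tGam_i,\Gamma)$ demanded by the rigidity condition \eqref{rightdim}, in the manner illustrated by Figure \ref{fig:h2split} for the Hirzebruch surface. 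By the factorization in Remark \ref{rem:factorize}, the count then reduces to the product of a Blaschke disk count in $\ol X_{P_0}$, which by Cho-Oh equals $1$ and contributes symplectic area $c_i - \eps_i - \bran{\lam,\mu_i}$ and boundary holonomy $\prod_j y_j^{\mu_{i,j}}$, together with relative Gromov-Witten counts of $\P^1$-fibers in the neck pieces, each equal to $1$ and contributing residual area summing to $\eps_i$. The total area recovered is $c_i - \bran{\lam,\mu_i}$, matching the $i$-th term of $W_{BG}$ in \eqref{eq:bgpot}.

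The main obstacle will be the combinatorial bookkeeping: I need to verify that (i) no other rigid split tropical graph contributes at these lowest $q$-valuations, which requires ruling out alternative configurations (multi-cover spheres, chains routed through different corner pieces) either by strict area comparison or by showing that the cone condition \eqref{eq:cc} with generic $\etasp$ forces the auxiliary components to live precisely in the slabs transverse to $\mu_i$; and (ii) the weight $\mult(\tGam_i)\, w_s(u)$ from \eqref{eq:md-split}-\eqref{eq:wtwsu-split}, after accounting for the framed tropical symmetry $|T_{\trop,\fr}(\tGam_i)|$ and the combinatorial factors $(d_\black(\Gamma)! \, s(\tGam)!)^{-1}$, reduces to coefficient $+1$ on each term. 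Once these combinatorial and orientation-sign checks are completed, the identification with \eqref{eq:bgpot} is a direct consequence of Cho-Oh's explicit description of holomorphic disks bounded by toric Lagrangians.
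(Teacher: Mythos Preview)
Your overall strategy is correct and matches the paper's: reduce to the split Fukaya algebra via Proposition \ref{prop:mc-restate} and the homotopy equivalences, then identify the lowest-area split disks contributing to $m^0_\spl(1)$. Part \eqref{part:blas1} is exactly as in the paper.

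For part \eqref{part:blas2} your plan is in the right spirit but differs methodologically from the paper in a way worth noting. Rather than enumerating rigid split tropical graphs directly and then checking the cone condition by hand, the paper \emph{constructs} the relevant split disk as the limit of an explicit one-parameter family of deformed maps: starting from the broken disk $u_\br=(u_+,u_-)$ (Maslov-two Blaschke disk in $\ol X_0$ together with a single $\P^1$-fiber in $\ol X_i$), one deforms the matching condition at the unique split edge and takes $\tau\to\infty$ along $\etasp$. The limit has \emph{two} components, not a chain: the disk $u_+$ stays put, while the fiber sphere $u_-$ drifts into the corner neck piece $\ol\XC_{P_{(p,D_i)}}$ over a torus fixed point $p\in D_i$. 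This buys you the cone condition for free (it is the content of Proposition \ref{prop:defconv}) and makes the dimension check \eqref{rightdim} immediate, since the vertex $v_-$ moves freely in the $(n-1)$-dimensional dual cell $P_{(p,D_i)}^\dual$. Your description of ``a chain of sphere components propagating through the slab'' is not quite the right picture here; compare Figure \ref{fig:blas} rather than Figure \ref{fig:h2split}.

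For your obstacle (i), the paper's argument is cleaner than what you sketch: any other split disk either has disk part of Maslov index $>2$ (hence intersects at least two relative divisors of $\ol X_0$, forcing strictly larger area since the $\eps_i$ are small), or has the same Maslov-two disk part but a non-fiber sphere component in $\ol X_i$ (again strictly larger area, since the fiber class has minimal area $\eps_i$ among nonconstant spheres in $\ol X_i$). No appeal to the cone condition or to genericity of $\etasp$ is needed to exclude competitors. Your obstacle (ii) is genuine but the paper does not carry out a detailed sign/multiplicity check either; it asserts that the split disk ``makes the expected contribution,'' relying implicitly on the fact that area and boundary holonomy are preserved under the degeneration from $u_\br$ to $u^\infty$.
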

\begin{remark}\label{rem:higher-order-cs}
  The tropical disk potential $W(b)$ in Proposition \ref{prop:blas}
  \eqref{part:blas2} may contain higher order terms that are not
  present in the Batyrev-Givental potential. For example, in the
  second Hirzebruch surface (see Figure T-\ref{T-fig:h2-disks}), the
  Batyrev-Givental potential has $4$ terms, corresponding to each of
  the toric divisors. The tropical potential $W(b)$ has 5 terms; the
  additional term is the sum of a disk class and a $(-2)$-sphere
  class.
\end{remark}
\begin{figure}[t]
  \centering 
\begingroup%
  \makeatletter%
  \providecommand\color[2][]{%
    \errmessage{(Inkscape) Color is used for the text in Inkscape, but the package 'color.sty' is not loaded}%
    \renewcommand\color[2][]{}%
  }%
  \providecommand\transparent[1]{%
    \errmessage{(Inkscape) Transparency is used (non-zero) for the text in Inkscape, but the package 'transparent.sty' is not loaded}%
    \renewcommand\transparent[1]{}%
  }%
  \providecommand\rotatebox[2]{#2}%
  \newcommand*\fsize{\dimexpr\f@size pt\relax}%
  \newcommand*\lineheight[1]{\fontsize{\fsize}{#1\fsize}\selectfont}%
  \ifx\svgwidth\undefined%
    \setlength{\unitlength}{355.3282993bp}%
    \ifx\svgscale\undefined%
      \relax%
    \else%
      \setlength{\unitlength}{\unitlength * \real{\svgscale}}%
    \fi%
  \else%
    \setlength{\unitlength}{\svgwidth}%
  \fi%
  \global\let\svgwidth\undefined%
  \global\let\svgscale\undefined%
  \makeatother%
  \begin{picture}(1,0.28575578)%
    \lineheight{1}%
    \setlength\tabcolsep{0pt}%
    \put(0,0){\includegraphics[width=\unitlength,page=1]{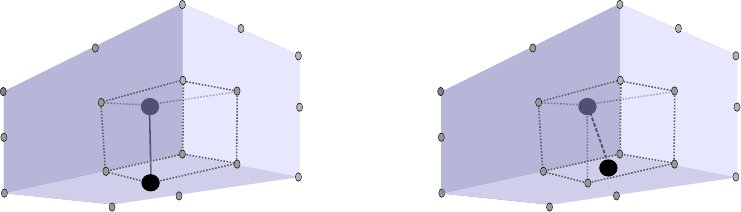}}%
    \put(0.16119112,0.12437138){\color[rgb]{0,0,0}\makebox(0,0)[lt]{\lineheight{0}\smash{\begin{tabular}[t]{l}$u_+$\end{tabular}}}}%
    \put(0.79559617,0.15999017){\color[rgb]{0,0,0}\makebox(0,0)[lt]{\lineheight{0}\smash{\begin{tabular}[t]{l}$u_+$\end{tabular}}}}%
    \put(0.16119112,0.02486589){\color[rgb]{0,0,0}\makebox(0,0)[lt]{\lineheight{0}\smash{\begin{tabular}[t]{l}$u_-$\end{tabular}}}}%
    \put(0.83630298,0.04239273){\color[rgb]{0,0,0}\makebox(0,0)[lt]{\lineheight{0}\smash{\begin{tabular}[t]{l}$u_-$\end{tabular}}}}%
    \put(0.81438367,0.10370544){\color[rgb]{0,0,0}\makebox(0,0)[lt]{\lineheight{0}\smash{\begin{tabular}[t]{l}$e$\end{tabular}}}}%
    \put(0.20848418,0.09076631){\color[rgb]{0,0,0}\makebox(0,0)[lt]{\lineheight{0}\smash{\begin{tabular}[t]{l}$e$\end{tabular}}}}%
    \put(0,0){\includegraphics[width=\unitlength,page=2]{blaschke.pdf}}%
    \put(0.44915419,0.12437138){\color[rgb]{0,0,0}\makebox(0,0)[lt]{\lineheight{0}\smash{\begin{tabular}[t]{l}Homotopic\end{tabular}}}}%
  \end{picture}%
\endgroup%

  \caption{Proposition \ref{prop:blas}: 
    The tropical graph of the
    broken and split map in the dual complex $B^\dual$ of $\PP$, when the moment polytope
    $\Phi(X)$ is a cube. 
    The edge $e$ is a split
    edge.}
  \label{fig:blas}
\end{figure}
\begin{proof}[Proof of Proposition \ref{prop:blas}]
  The terms in the Batyrev-Givental potential correspond to disks of
  Maslov index $2$ that have an intersection of $1$ with one of the
  toric divisors, and do not intersect the other toric divisors. We
  will show that the potential  consists
  of a term corresponding to each of these disks.

  It is enough to work with the potential arising from a split Fukaya algebra as we explain:
  Assuming the notation \eqref{eq:XPhi} for the toric variety $X$, we apply the
  multiple cut $\PP$ from 
  \eqref{eq:cutxx} from earlier in the section.
  The Lagrangian $L$ is then a tropical torus.
  We fix a generic cone direction $\etasp$. 
  By Proposition \ref{prop:mc-restate}, $CF_{\split}(\XX,L,{\etasp})$ is weakly unobstructed
  and $b=wx^{\greyt}$ is a solution of the Maurer-Cartan equation, where $w \in \Lam_{\geq 0}$ is given by
  \begin{equation}
    \label{eq:m0w}
    m^0_{CF_{\split}(\XX,L,{\etasp})}=w x^{\blackt},   
  \end{equation}
  and the disk potential is $W(b)=w$. It is enough to compute $w$
  working with the split Fukaya algebra, since the potential is
  preserved by \ainfty homotopy equivalences: By Theorem
  \ref{thm:tfuk}, there is an \ainfty homotopy equivalence
  \[ \F : CF_\spl(\XX,L,\etasp) \to CF(X,L). \] 
  It follows that $\F(b)$ is a
  Maurer-Cartan solution (see \cite[Lemma 5.2]{cw:flips}) for
  $CF(X,L)$, and $W(\F(b))=W(b)$. By \eqref{eq:m0w}, the quantity $w$
  is given by counting split disks for the cone direction $\etasp$
  that satisfy a point constraint on the boundary.

  First, we describe the set
  of polytopes in the polyhedral decomposition $\PP$: 
    There is a top-dimensional
  polytope $P_F \in \PP$ corresponding to every face
  $F \subseteq \Delta$, and two of these polytopes $P_{F_1}$,
  $P_{F_2}$ intersect exactly if either $F_1 \subseteq F_2$ or
  $F_2 \subseteq F_1$.  Further, any polytope in $\PP$ is of the form
  \[P_{F_1 F_2}=P_{F_1} \cap P_{F_2}, \quad F_1 \subseteq F_2\]
  for a pair of faces $F_1, F_2$ in $\Delta$, and
  $\dim(P_{F_1 F_2})=\codim_{F_2}(F_1)$.
  %
  We use the following shorthand notation for cut spaces
  \[\ol X_{F_1 F_2}:=\ol X_{P_{F_1 F_2}}, \quad \ol X_F:=\ol
    X_{P_F} \]
  for faces $F$, $F_1$, $F_2$ of $\Delta$, and a further abbreviation
  for some cut spaces
  \[\ol X_0:=\ol X_{\Delta}, \quad \ol X_i:=\ol X_{D_i}, \quad
    \ol X_{0i}:=\ol X_{(D_i,\Delta)}.\]
  where $D_i$ is a facet of $\Delta$, and $1 \leq i \leq N$. The dual
  complex $B^\dual$ consists of a top-dimensional polytope for every
  corner in $\Delta$, and a zero-dimensional cell for every face of
  $\Delta$. For example, if $\Delta$ is a three-dimensional cube, the
  dual polytope $B^\dual$ is as in Figure \ref{fig:blas}, where the
  grey dots are $0$-cells, and the dotted lines bound one of the
  top-dimensional cells.

  Corresponding to every
  prime toric boundary
  divisor
  of $X$
  there is a broken disk
  whose glued type has Maslov index two, which we now describe.  The
  perturbation datum used to define the broken Fukaya algebra on
  $\XX_{\PP}$ is such that the almost complex structure on $\ol X_0$
  is standard, and the pieces $\ol X_1,\dots,\ol X_N$ are fibrations
  \begin{equation}
    \label{eq:fibpi}
    \pi_i:\ol X_i \to \ol X_{0i}
  \end{equation}
  whose fibers are holomorphic spheres, each of which intersects the
  relative divisor $\ol X_{0i}\simeq \ol X_i \cap \ol X_0$ at a single
  point.  The index two disk incident on the $i$-th toric divisor has
  two components : $u_\br=(u_+,u_-)$. In this pair
  $u_+ : \D \to \ol X_0$ is an index two Maslov disk intersecting the
  relative divisor $\ol X_{0i}$ at $p_i \in \ol X_{0i}$, and $u_-$
  maps to $\ol X_i$ and is a fiber of $\pi_i$ in \eqref{eq:fibpi}.

  Next, we describe the deformed family corresponding to the broken disk in the previous paragraph.
  Corresponding to the broken disk $u_\br$, 
  there is a family of deformed disks
  \[T_\C/T_{\cT(e),\C} \ni \tau \mapsto u^\tau, \]
  where $u_\br$ is the element  $u^{\tau=0}$. 
  Here $u^\tau:=(u_+, u_-^\tau)$ is a $\tau$-deformed map, and
  $u_-^\tau$ is a sphere in $\ol X_{P_i}$ homotopic to $u_-$ that
  satisfies
  \begin{equation}
    \label{eq:matchtau}
    p_i:=\ev_{\scriptstyle \ol X_{0i}}(u_+)=e^{\tau}\ev_{\scriptstyle
      \ol X_{0i}}(u_-^\tau) , 
  \end{equation}
  where $\ev_{\scriptstyle \ol X_{0i}}$ is the ordinary evaluation map
  at the lift of the nodal point $w_\pm(e)$ mapping to $\ol
  X_{0i}$. 
  The component $u_+$ stays constant under variation of $\tau$ because
  it is a disk of Maslov index two in the toric variety $\ol X_{0}$,
  and is therefore rigid.

  The split map is the limit of a sequence of deformed broken maps
  whose deformation parameters $\tau_\nu$ approach the infinite end of
  the torus in a generic direction. As $\nu \to \infty$, the sequence
  of points $e^{-\tau_\nu}p_i \in \ol X_{0i}$ approaches a $T$-fixed
  point $\ol X_{p, D_i} \in \ol X_{0i}$, where $p \in D_i$ is a vertex
  on the facet $D_i$.  Here we have used the observation that all
  $T$-fixed points in $\ol X_{0i}$ are intersections of relative
  divisors.  Consequently, the sequence $u^\tau$ converges to a split
  map $u^\infty=(u_+,u_-^\infty)$ where $u_-^\infty$ maps to the neck
  piece $\ol \XC_{P_{(p,D_i)}}$.  The tropical graph underlying
  $u^\infty$ satisfies the cone condition for the following
  reason. The quasi-split tropical graph $\tGam$ underlying $u^\infty$
  has vertices $v_+$, $v_-$ corresponding to $u_+$, $u_-^\infty$,
  which are connected by a split edge $e$. The vertex $v_-$ is free to
  move in the dual polytope $P_{(p,D_i)}^\dual$ which is
  $n-1$-dimensional (see Figure \ref{fig:blas}). Thus the set of
  relative vertex positions $\W(\tGam,\bGam)$ is
  $(\dim(\t)-1)$-dimensional. The area and boundary holonomy of
  $u^\infty$ are equal to the area and boundary holonomy of $u$, and
  therefore, $u^\infty$ makes the expected contribution to the
  potential $m_0(1)$ of $CF_{\split}(\XX_{\hat P},L)$.

  Finally, 
  we show that all other terms in the  potential of
  split Fukaya algebra of the Lagrangian fiber 
  are of higher order in $q$ than at
  least one of the disks in the last paragraph.  Denote the split disk
  intersecting the divisor $D_i \subset X$ (from the last paragraph)
  by $u_i:=(u_{i,+},u_{i,-}^\infty)$.  Suppose $u$ is a split disk
  that contributes to the potential, and is not equal to $u_i$ for any
  $i$.  There are two possibilities:
  \begin{enumerate}
  \item The disk part of $u$, denoted by $u_\white : C \to \ol X_{0}$
    is not a Blaschke disk of Maslov index two. Since all the
    torus-invariant divisors of $\ol X_0$ are relative divisors, we
    may conclude that $u_\white$ intersects more than one
    torus-invariant divisors, say $\ol X_{0i}$ and $\ol X_{0j}$,
    transversely. Since the constants $\eps_i$, $\eps_j$ are small (or
    in other words the cut is close to the toric divisor of $X$), we
    conclude that the area of $u_\white$ is larger than both the split
    disks $u_i$, $u_j$, and thus $u$ contributes to a higher order
    term.
  \item The disk part of $u$, denoted by $u_\white : C \to \ol X_0$,
    is a Blaschke disk of Maslov index two intersecting the relative
    divisor $\ol X_{0i}$, but the other component(s) is not the fiber
    sphere in $\ol X_i$. Again, this configuration has a larger area
    than the split disk $u_i=(u_{i,+},u_{i,-}^\infty)$, since
    $u_\white=u_{i,+}$, $u^\infty_{i,-}$ is homologous to a fiber
    sphere of area $\eps_i$ in $\ol X_i$, and in the manifold
    $\ol X_i$ the fiber sphere has the smallest area amongst all
    non-constant symplectic spheres.
  \end{enumerate}
  This finishes the proof of Proposition \ref{prop:blas}.
\end{proof}

\bibliography{split}{} \bibliographystyle{plain}

\end{document}